\documentclass[11pt]{article}
\usepackage[english]{babel}
\usepackage{amsmath,amssymb}
\usepackage{amsthm}
\usepackage{thmtools}
\newtheorem{thm}{Theorem}[section]
\usepackage[colorlinks=true, linkcolor=blue]{hyperref}
\usepackage[letterpaper,margin=1in]{geometry}
\usepackage{graphicx}
\usepackage{color}
\usepackage{pgfplots}
\usepackage{enumerate}

\pgfplotsset{compat=1.18}
\newtheorem{lem}[thm]{Lemma}
\newtheorem{theorem}{Theorem}
\newtheorem{fact}[thm]{Fact}

\usepackage{newtxtext}

\definecolor{rafid}{RGB}{0,102,204}      
\definecolor{antonio}{RGB}{153,0,153}   

\title{Mixing and cutoff for the systematic scan dynamics of the mean-field ferromagnetic Potts model}

\author{Antonio Blanca\thanks{Department of CSE, Pennsylvania State University, ablanca@cse.psu.edu.  Research supported in part by NSF CAREER grant CCF-2143762.} \and Rafid Md Tahmidur\thanks{Department of CSE, Pennsylvania State University, mxr5997@psu.edu. Research supported in part by NSF CAREER grant CCF-2143762} 
}

\date{}

\begin{document}

\maketitle

\thispagestyle{empty}

\begin{abstract}
We study the mixing time of the systematic scan dynamics for the $q$-state ferromagnetic Potts model on the $n$-vertex complete graph, known as the mean-field model. This Markov chain updates vertices sequentially according to a fixed predetermined order, in contrast to the Glauber dynamics which updates a uniformly random vertex at each step. Systematic scan dynamics are attractive in practice as they often demonstrate strong empirical performance. However, their theoretical analysis remains far less developed than that of the Glauber dynamics.

We take a step toward addressing this imbalance by showing that for every $q\ge 2$ and $\beta<\beta_s$, where $\beta_s$ is the metastability threshold associated with the onset of slow mixing for the Glauber dynamics, the systematic scan dynamics for the ferromagnetic mean-field Potts model mixes in $\Theta(\log n)$ scans or, equivalently, in $\Theta(n\log n)$ single site updates. We in fact prove a sharper result; namely, that there exists a constant $c(\beta,q) > 0$ such that the mixing time is $c(\beta,q)\log n + \Theta(1),$ which implies that the Markov chain exhibits the cutoff phenomenon, with the total variation distance to the stationary distribution dropping abruptly from nearly 1 to nearly 0 within a narrow $\Theta(1)$ time window. This result is tight in $\beta$ as well since the dynamics mixes exponentially slowly for $\beta > \beta_s$. To the best of our knowledge, this is the first general cutoff result for the systematic scan dynamics in the context of spin systems. The result may also be of independent interest in the theory of Markov chains, since the systematic scan dynamics is both global and non-reversible, two settings in which cutoff remains poorly understood.
\end{abstract}

\vfill

 \pagebreak

\setcounter{page}{1} 
\section{Introduction}

Sampling high-dimensional probability distributions is 
an important task in science and engineering. 
It is, for example, a prevalent problem when running simulations in statistical physics or when solving inference problems in machine learning. 
As such, there is an
array of sophisticated algorithms and heuristics to generate samples from these distributions.
One of the most classical and powerful approaches is \emph{Gibbs Sampling}, which iteratively updates one variable at a time conditioned on the values of all other variables. 

There are two standard strategies for deciding which variable to update in each step: (i) the Glauber dynamics, where the variable to be updated is chosen uniformly at random, and (ii) systematic scan dynamics, where variables are updated according to a predetermined ordering. In practice, systematic scan dynamics offer several computational advantages over the Glauber dynamics, as they are often amenable to parallelization, 
can exhibit improved memory locality and thus reduce data-access overhead, and require fewer random bits~\cite{zhang2013towards}. 
Despite these practical advantages, most theoretical analyses focus on the Glauber dynamics, which have historically proven more mathematically tractable, leaving the convergence theory for systematic scan dynamics significantly less developed.

The number of variable updates required to reach stationarity can differ significantly between the Glauber and systematic scan dynamics, sometimes up to polynomial factors in $n$~\cite{he2016scan,roberts2016surprising}. For example, even in the trivial case of $n$ independent variables, the systematic scan mixes in exactly $n$ updates, whereas the Glauber dynamics requires $\frac{1}{2}n\log n$ updates. 
When combined with the implementation benefits noted above, such differences in convergence rates can make the systematic scan dynamics a particularly attractive update strategy.

In this paper, we take a step toward a better theoretical understanding of the systematic scan dynamics 
by studying them in the context of the \emph{ferromagnetic mean-field Potts model}, a 
classical spin system model central in statistical physics, theoretical computer science, machine learning, and probability theory.
In this model, each variable in $V = \{1, \dots, n\}$
can take a value from $Q = \{1, ..., q\}$.
The Gibbs or Boltzmann distribution assigns to each configuration $\sigma \in \Sigma_n = Q^V$ a probability given by:
\begin{align*} 
    \mu(\sigma) =  \frac{1}{Z_{\beta, n}}\exp\Bigg\{{\frac{\beta}{n} \sum_{\{u, v\} \subseteq V} \mathbf{1}\big(\sigma(u) = \sigma(v)\big)}\Bigg\},
\end{align*}
where $\sigma(u)$ is the value (or spin) assigned to vertex $u$ in the configuration $\sigma$, and $Z_{\beta, n}$ is the normalization constant or partition function.
The parameter $\beta$ captures the strength of the interaction between the variables and is proportional to the inverse temperature in statistical physics applications. The $q=2$ case corresponds to the classical Ising model. The Potts model can be defined on any graph by taking the summation in the definition over the edges of the graph instead of over pairs of vertices; the mean-field setting considered here corresponds to the complete graph on $n$ vertices.

For concreteness, we consider the standard variant of the systematic scan dynamics in which site updates are given by the \emph{heat-bath} update, i.e., they are performed according to the conditional distribution at each update site induced by the configuration on the remaining vertices.
Formally, if vertex $v$ is to be updated in a configuration $\sigma$, then $v$ is assigned
a new spin sampled from $\mu(\,\cdot\mid\sigma(V\setminus\{v\}))$. 
This is a non-reversible ergodic Markov chain with state space $\Sigma_n$ that converges to $\mu$.
We measure rates of convergence to $\mu$ in terms of the \emph{mixing time}, which is the most standard notion of speed of convergence for a Markov chain. It is defined as the number of steps until the dynamics is close in total variation distance to $\mu$ starting from any initial configuration. 
Formally, if $P$ denotes the transition matrix of the chain,
for $\varepsilon \in (0,1)$, the $\varepsilon$-mixing time is given by
\begin{align*}
    T_{\text{mix}}(\varepsilon) = \min \left\{ t \in \mathbb{Z}_{\geq0} : \underset{\sigma \in \Sigma_n}{\max} \left\lVert P^t(\sigma, \cdot) - \mu \right\rVert_{\textsc{tv}} \leq \varepsilon  \right\}.
\end{align*}
By convention, the mixing time is taken to be $ T_{\text{mix}} = T_{\text{mix}}(1/4)$.

A more refined notion of convergence, when present, is the cutoff phenomenon. A Markov chain exhibits \emph{cutoff} 
when there is a sharp drop in the total variation distance from close to $1$ to close to $0$ in an interval of time of smaller order than its mixing time; which is called the \emph{cutoff window}. 
More precisely, we say that there is cutoff if as $n \to \infty$, for any fixed $\varepsilon \in (0,1)$
\begin{equation}
\label{eq:cuttoff}
T_{\text{mix}}(\varepsilon) - T_{\text{mix}}(1-\varepsilon) = o(T_{\text{mix}}).
\end{equation}
The mixing time and cutoff of the Glauber dynamics for the mean-field ferromagnetic Potts model is by now very well-understood.
In the case of the Ising model (the $q=2$ case)
at $\beta < 2$,
the Glauber dynamics mixes in $\frac{1}{2(1-\beta/2)} n \log n$ steps with an $O(n)$ cutoff window.
The mixing time is $\Theta(n^{3/2})$ at $\beta=2$ and exponential in $n$ when $\beta > 2$~\cite{ding2009mixing,ding2009censored,levin2010glauber}.
For $q \ge 3$, the model exhibits a first-order phase transition and the dynamics undergoes a slowdown at $\beta_s = \beta_s(q)$, which is the spinodal point marking the onset of metastability.
Specifically, when $\beta < \beta_s$ the mixing time is $\frac{1}{2(1 - \beta/q)}n \log n$ with an $O(n)$ cutoff window. At $\beta = \beta_s$ the dynamics mixes in $\Theta(n^{4/3})$ and no longer exhibits cutoff, and the mixing time is again exponential in $n$ for $\beta > \beta_s$~\cite{Cuff_2012}. 

In contrast with this fairly complete convergence picture for the mean-field Glauber dynamics, much less is known for systematic scan dynamics, with the best known mixing time bound being $O(n^2\log n)$ for $\beta < \beta_s$. We provide here the first sharp analysis of the mixing time of the systematic scan dynamics for the mean-field ferromagnetic Potts model, valid for all scan orderings and for all inverse temperatures $\beta<\beta_s$.

\begin{theorem}\label{thm:mainTheorem}
For $q \geq 2$, $\beta <\beta_s$, there exists a constant $c(\beta, q) > 0$ such that the systematic scan dynamics for the mean-field ferromagnetic Potts model exhibits cutoff at mixing time $T_{\text{mix}} = c(\beta,q) \log(n) + \Theta(1)$.
\end{theorem}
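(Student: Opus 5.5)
The plan is to reduce the dynamics to the evolution of the proportion vector $\alpha(\sigma)=(\alpha_1,\dots,\alpha_q)$, where $\alpha_i$ is the fraction of vertices with spin $i$. The mean-field Gibbs distribution is invariant under vertex permutations, but a single scan is not: vertices already updated in the current scan are distinguishable from those not yet updated. So we track, at each time within a scan, the proportions of spins among the first $k$ updated vertices and among the remaining $n-k$ vertices. Conditional on these counts, the remaining randomness is exchangeable, so the chain is driven by a low-dimensional process. The heat-bath update at a vertex $v$ assigns spin $i$ with probability proportional to $\exp(\beta \alpha_i^{-v})$, where $\alpha^{-v}$ excludes $v$.

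Over one full scan, the proportion vector then evolves approximately along the ODE $\dot\alpha = F(\alpha)-\alpha$, with $F_i(\alpha)=e^{\beta\alpha_i}/\sum_j e^{\beta\alpha_j}$. The scan runs for unit time, with the updated and non-updated vertices tracked separately, so the one-scan map is a deterministic map $\Phi$ on the simplex plus Gaussian fluctuations of order $n^{-1/2}$. The unique attracting fixed point of $\Phi$ for $\beta<\beta_s$ is the uniform vector $\mathbf{1}/q$. Its linearization has contraction factor $\lambda=\lambda(\beta,q)<1$ on the hyperplane $\sum_i x_i=0$; I expect $\lambda=e^{-(1-\beta/q)}$. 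This gives $c(\beta,q)=\frac{1}{2\log(1/\lambda)}$, matching the Glauber constant rescaled by $n$.

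The steps are as follows.
\begin{enumerate}[(1)]
\item \textbf{Burn-in.} Using that $\beta<\beta_s$ makes the uniform point the unique fixed point and a global attractor of the mean-field flow, show that from any start the proportion vector comes within a small constant $\delta$ of $\mathbf{1}/q$ in $O(1)$ scans with high probability.
\item \textbf{Contraction.} Show that once within $\delta$, the distance $\|\alpha-\mathbf{1}/q\|$ contracts by a factor $\lambda+O(\delta)$ per scan in expectation, with variance $O(1/n)$ per scan. Hence after $c\log n + r$ scans it is $O(\lambda^{r}/\sqrt n)+O(n^{-1/2})$, i.e.\ on the equilibrium scale.
\item \textbf{Coupling of proportions.} Couple two copies whose proportions are both within $O(n^{-1/2})$ of $\mathbf{1}/q$ so that their proportion vectors coincide after $O(1)$ further scans with probability $1-\varepsilon$. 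This uses a local CLT or a coupon/Gaussian coupling of the multinomial-like increments. An additional $O(1)$ scans then couple the full configurations: after one scan every vertex has been resampled given the proportions, and a monotone or matching coupling of the vertex identities works under the symmetry.
\item \textbf{Lower bound.} Start from a monochromatic or biased configuration. Step (2) with the matching lower-order estimate shows that after $c\log n - r$ scans, $\alpha_1-1/q$ still has mean of order $\lambda^{-r}n^{-1/2}$, which is large relative to the $n^{-1/2}$ equilibrium fluctuations. Chebyshev's inequality with a variance bound then distinguishes the chain from $\mu$ with probability close to $1$.
\end{enumerate}

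The main obstacle is step (2), together with making $\Phi$ precise. Within a scan the vertex updates are sequential and depend on which vertices have already been updated, so the chain is non-reversible and not a function of $\alpha$ alone. I would handle this by working with the two-block state (updated versus not yet updated proportions) and computing the exact linearized drift of the magnetization over one scan via a martingale decomposition. The drift is $x\mapsto (1-1/n)x+\frac{\beta}{qn}x+O(\|x\|^2/n)$ per update, compounded over $n$ updates, giving $\lambda=e^{-(1-\beta/q)}$ up to $O(1/n)$. Controlling the second moments sharply enough to get an $O(1)$ window requires bounding the accumulated variance by $O(1/n)$ uniformly, using the contraction to sum a geometric series.
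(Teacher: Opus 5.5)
\textbf{Gap: the linearization in step (2) is the Glauber one, so the rate and the constant $c(\beta,q)$ are wrong.} The drift $x\mapsto(1-\frac1n)x+\frac{\beta}{qn}x$ assumes the overwritten spin belongs to a uniformly random vertex, so it has mean deviation $x$. Under systematic scan, the spin overwritten at update $t$ is the one written at update $t-n$. Write $x_t=\mathbb{E}[\alpha_\ell(\sigma_t)]-\frac1q$ and let $z_i$ be the mean deviation of the spin written at update $i$. Then $x_t-x_{t-1}=\frac1n(z_t-z_{t-n})$ with $z_t\approx\frac{\beta}{q}x_{t-1}$. Equivalently, for $t\ge n$, $x_t\approx\frac{\beta}{qn}\sum_{i=t-n}^{t-1}x_i$, which is the delay recursion the paper analyzes.

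Its rate is the root $\gamma_n$ of $\frac{\beta}{qn}\sum_{i=1}^n\gamma_n^{-i}=1$. Here $\gamma_n=1-\frac bn+O(n^{-2})$, where $\beta(1-e^{-b})/(qb)=e^{-b}$. So the per-scan factor is $e^{-b}$ and $c(\beta,q)=\frac1{2b}$, not $\frac{1}{2(1-\beta/q)}$. In fact $b>1-\beta/q$ always: with $u=1-\beta/q$ this reduces to $e^u-1<\frac{u}{1-u}$. Moreover $b\approx 2(1-\beta/q)$ as $\beta/q\to1$, and $b\to\infty$ as $\beta\to 0$. As a sanity check, at $\beta=0$ one scan gives an exact sample, while your formula predicts $\frac12\log n$ scans.

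This is fatal, not cosmetic. Cutoff needs matching bounds at the same $c$. A too-large $c$ only loosens your upper bound, but it makes step (4) false: by time $\frac{1}{2(1-\beta/q)}\log n-r$ the chain is already mixed.

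\textbf{Underlying issue: neither $\alpha$ nor the two-block counts is a sufficient state.} After the first scan, the not-yet-updated vertices were written at different moments of the previous scan, under different proportion vectors, so they are not exchangeable. The law of the next scan depends on all suffix counts $\sum_{j\ge i}e_{\sigma(j)}$. So the one-scan map acts on profiles along the scan order, not on the simplex.

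As a result, step (2) fails even qualitatively. Take $q=2$, put the excess of spin $1$ on the last $\approx 2\delta n$ vertices of the order, and balance the rest. To first order, $\alpha_1-\frac12$ is then multiplied by about $e^{\beta/2}-1$ over one scan, which exceeds $1$ once $\beta>2\ln 2$. This is why the paper's burn-in must land in the good-spread set $\Sigma_n^\rho$, not merely near $\hat e$. It gets there by controlling every suffix window of length $>n^{3/4}$ with the drift function $G_\beta$ plus a maximal Azuma bound.

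The same missing symmetry breaks step (3). Configurations with equal proportions but different arrangements along the fixed order are not interchangeable, so coupling proportions and then matching vertex identities does not work. The paper instead:
\begin{enumerate}
\item runs the optimal single-site coupling and shows that $\mathbb{E}[d_H]$ obeys the same delay recursion, driven by $\mathbb{E}\lVert\alpha(\sigma_t)-\hat e\rVert_2^2\le C\rho^2\gamma_n^{2t}+\frac Cn$ (which relies on the $O(1/n)$ variance bound);
\item concludes $d_H\le C\sqrt n$ after $\frac1{2b}\log n+O(1)$ scans;
\item finishes with one scan, using the relative-entropy bound $\frac Cn d_H^2+\frac Cn$ and Pinsker's inequality.
\end{enumerate}
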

For $n$ large,
the leading constant $c(\beta, q)$ corresponds to $\frac{1}{2b(\beta,q)}$ where $b = b(\beta,q)$ is the root of the equation
$
\beta(1 - e^{-b})/(qb) = e^{-b};
$
see Section~\ref{sec:recur} for additional details.
Figure~\ref{fig:mt} shows a plot of the constant $c(\beta,q)$ as a function of $\beta/q$ and compares it to the leading constant $\hat c(\beta,q) = \frac{1}{2(1 - \beta/q)}$ for the mixing time of the Glauber dynamics, revealing at least a twofold speedup. This is consistent with the folklore conjecture that the systematic scan dynamics exhibits such a speedup across a variety of settings. 

\begin{figure}[t]
    \centering
    \begin{tikzpicture}
\begin{axis}[
    width=12cm,
    height=8cm,
    grid=none,
    xlabel={${\beta}/{q}$},
    ylabel={},
    legend pos=north west,
    legend cell align={left},
    xmin=0, xmax=0.95,
    ymin=0, ymax=6,
    thick,
    no markers
]
\addplot [blue] table [x=a, y=c1, col sep=comma] {plot_data_95.csv};
\addlegendentry{\small{$\hat{c}(\beta, q) = [2(1 - \beta/q)]^{-1}$}}
\addplot [red, dashed] table [x=a, y expr=\thisrow{c1}/2, col sep=comma] {plot_data_95.csv};
\addlegendentry{\small{$\frac{1}{2} \hat{c}(\beta, q)$}}
\addplot [black] table [x=a, y=c2, col sep=comma] {plot_data_95.csv};
\addlegendentry{\small{$c(\beta, q) = [2b(\beta, q)]^{-1}$}}
\end{axis}
\end{tikzpicture}
\caption{Comparison of mixing times of Glauber (blue curve) and systematic scan dynamics (black curve).}
\label{fig:mt}
\end{figure}

We note that Theorem~\ref{thm:mainTheorem} is tight in $\beta$, since, as mentioned, the Glauber dynamics mixes exponentially slowly for $\beta > \beta_s$ and known mixing time comparison results (see, e.g., \cite[Theorem 1.2]{gaitonde2026comparison}) imply that so does the systematic scan. At $\beta = \beta_s$, based on the known mixing behavior of the Glauber dynamics, we conjecture that  the systematic scan dynamics mixes $\Theta(n^c)$ for some $c<1$ without cutoff. 

To better contextualize Theorem~\ref{thm:mainTheorem}, we briefly comment on prior work on mixing times for the systematic scan dynamics on spin systems.
There is a line of work that seeks to compare the convergence rates of Glauber and systematic scan dynamics. This is done with varying degrees of generality in~\cite{amit1996convergence,gaitonde2026comparison,dyer2006systematic,diaconis2000analysis,bricmont1996statistical,chlebicka2025solidarity,guo2018layerwise}. In parallel, there is work establishing fast mixing for the systematic scan dynamics under model parameter and graph-structural conditions \cite{dyer2008dobrushin,hayes2006simple,matrixNorms}, or under monotonicity assumptions~\cite{blanca2026rapid,BlancaCS18,peres2013can}.
We note that the recent advances in Markov chain mixing via spectral independence have not yet yielded major new results for this dynamics; see~\cite{alev2023sequential} for initial progress.
This body of work demonstrates sustained interest on the systematic scan dynamics 
while also underscoring the significant analytical challenges they present, with many questions that are well understood for the Glauber dynamics remaining unresolved in this setting.

Turning to cutoff, results like Theorem~\ref{thm:mainTheorem} are rarer still. To the best of our knowledge, our work provides the first cutoff result for the systematic scan dynamics in the context of spin systems.
The cutoff result in~\cite{jeon2023systematic} for the Ising model considers a Markov chain inspired by systematic scan but that is effectively quite different since in each step, $k=o(n^{1/3})$ vertices are chosen uniformly random and updated according to a random permutation.
More generally, the systematic scan dynamics is a global, non-reversible Markov chain, and both features introduce substantial additional analytical challenges.
Indeed, the only other cutoff result for a global Markov chain for spin systems we are aware of is the recent work~\cite{blanca2025cutoff} on the Swendsen-Wang dynamics, while the study of the cutoff phenomenon for non-reversible Markov chains is only in its early stages~\cite{lim2024cutoffphenomenoncyclicdynamics}.

\subsection{Proof Sketch}\label{proofSketch}
For the rest of the paper, we assume $\beta < \beta_s$, $q \geq 2$ and $n$ to be asymptotically large. 
We refer to the vertex updates of the dynamics as \emph{site updates},
and $n$ consecutive vertex updates
are called a \emph{full-scan}. A step of the systematic scan dynamics Markov chain corresponds to a full-scan starting from the first vertex in the ordering.
It will be convenient to index the site updates,
and thus we use $\sigma_t$ for the configuration after the $t$-th site update and $\sigma_{kn}$ for the configuration after the $k$-th step of the Markov chain.

We will show that for any fixed $\varepsilon \in (0,1)$, there exists a $\kappa(\varepsilon)$ independent of $n$ such that
\begin{equation}
\label{eq:co}
c(\beta,q) \log n - \kappa(
\varepsilon) \le T_{\text{mix}}(\varepsilon) \le c(\beta,q) \log n + \kappa(\varepsilon).
\end{equation}
From this, Theorem~\ref{thm:mainTheorem} follows immediately; in particular, this inequality implies~\eqref{eq:cuttoff}.

To establish the upper bound on $T_{\text{mix}}(\varepsilon)$ we utilize a multiphase \emph{coupling}, and in that sense, it is reminiscent of the other Markov chain analysis in the mean-field setting~\cite{galanis2019swendsen,blanca2025mean,blanca2025cutoff,Cuff_2012}. 
A coupling considers two copies, $\left\{ \sigma_t\right\}_{t \geq 0}$ and $\left\{ \tilde{\sigma}_t\right\}_{t \geq 0}$, of the systematic scan dynamics starting from two arbitrary initial configurations $\sigma_0$ and $\tilde{\sigma}_0$, respectively.
Each copy evolves according to the transition matrix $P$, but the moves of the two copies can be arbitrarily correlated. 
The $\varepsilon$-coupling time $T_{\text{coup}}(\varepsilon)$ is the minimum number of full-scans $T$ such that $\Pr[\sigma_{Tn} \neq \tilde{\sigma}_{Tn}] \leq \varepsilon$. A standard result says that $T_{\text{mix}}(\varepsilon) \leq T_{\text{coup}}(\varepsilon)$;
for additional details see~\cite{MarkovChainBook}.

To describe the three main phases of our coupling we introduce some useful notation first.
For a configuration $\sigma \in \Sigma_n$, let $\alpha(\sigma)$ 
denote the $q$-dimensional vector with coordinates 
\begin{align*}
    \alpha_i(\sigma) = \frac{1}{n}\sum_{j = 1}^n \mathbf{1}(\sigma(j) = i).
\end{align*}
We call $\alpha(\sigma)$ the proportions vector of $\sigma$.
We let $\hat{e} = \big({1}/{q}, \dots, {1}/{q}\big)$ denote the 
$q$-dimensional equiproportions vector that corresponds to the configuration with exactly the same number of vertices assigned each spin (ignoring rounding issues).

The main phases of our coupling are the following:
\begin{enumerate}
    \item[] \textbf{Phase 1:} From arbitrary initial configurations $\sigma_0$ and $\tilde{\sigma}_0$, run two copies $\{\sigma_t\}$ and $\{\tilde{\sigma}_t\}$ independently for an initial \emph{burn-in} period of $T = O(1)$ scans. After this initial burn-in, we show that both copies are such that 
    $\|\alpha(\sigma_{Tn}) - \hat{e}\|_\infty \le \rho$ and
    $\|\alpha(\tilde{\sigma}_{Tn}) - \hat{e}\|_\infty \le \rho$
    with probability $1-o(1)$, for any desired constant $\rho > 0$. In addition, we show that both copies have ``good spread'' of their spins with respect to the ordering of the systematic scan dynamics.
    
    \item[] \textbf{Phase 2:} We can then couple the evolution of the two instances of Markov chain so that after $c(\beta,q)\log n + O(1)$ additional scans: the Hamming distance between the two configurations is $O(\sqrt{n})$ and the $\ell_2$ distance between their respective proportions vector 
    and $\hat {e}$ is also $O(1/\sqrt{n})$. 
    
    \item[] \textbf{Phase 3:} Finally, we couple one final scan in a way that guarantees that the processes coalesce with at least the desired $\varepsilon$ probability.
\end{enumerate}

We provide additional details about \textbf{Phase 1} next.
Let $e_i \in \mathbb{R}^q_{\geq 0}$ be the indicator vector with value $1$ in the $i$-{th} coordinate and $0$ elsewhere.
We capture the notion of closeness to $\hat e$ with ``good spread'' by considering for each $\rho > 0$ the set of configurations 
\begin{align*}
    \Sigma_n^\rho &= \Big\{ \sigma \in \Sigma_n : \Big\lVert \hat{e} - \frac{1}{i} \sum_{j = n-i+1}^n e_{\sigma(j)}  \Big\rVert_\infty \leq \rho \quad \forall\,i \in \mathbb{N} \cap (n^{3/4},n] \Big\}.
\end{align*}
Observe that $\sigma_t \in \Sigma^\rho_n$ implies not only that $\lVert \alpha(\sigma_t) - \hat{e} \rVert_\infty \le \rho$, but also that 
as we perform site updates to generate $\sigma_{t+n}$, the  intermediate configurations 
are also balanced. To see this, note that when updating $\sigma_j$ for $j \in [t,t+n]$, the fact that $\sigma_t \in \Sigma^\rho_n$ ensures that the partial configuration in the vertices $v_{j+1-t},\dots,v_{n}$ is also balanced; in addition, the partial configuration in $v_1,\dots,v_{j-t-1}$ we can guarantee to be balanced since it was just updated. (The $n^{3/4}$ gives a required slack to make such statement possible; note, for example, that a single vertex cannot be balanced.)

This more fine grained notion of closeness to $\hat e$ marks a clear distinction between the Glauber and systematic scan analyses. 
In the former, given a proportions vector $\alpha$, the dynamics is oblivious to the specific spin assignment to vertices (i.e., from $\alpha(\sigma_{t-1})$ we can generate $\alpha(\sigma_t)$); as such, the speed of convergence of the Markov chain is governed by its $\ell_\infty$ closeness to $\hat e$. However, this is not the case for systematic scan, since this Markov chain updates vertices in a fixed order and thus depends on the specific assignment of spins to vertices along the scan order (not only on the spin counts). We are therefore required to establish this stronger notion of closeness to $\hat e$.  We show that the following holds after \textbf{Phase 1}. 

\begin{lem}\label{lem:BurnIn:RhoBoundedConfigAndSpinFrac}
For any fixed $\rho >0$, there exist constants $c >0$ and $k \in \mathbb{Z}_+$
depending only on $\beta$, $q$, and $\rho$ 
such that
\begin{align}
    \Pr\left[ \sigma_{kn} \notin \Sigma_n^{\rho} \right] \leq 2qke^{-c\sqrt{n}}. \label{eqn:BurnIn:RhoBoundedConfigAndSpinFrac:ConstTime}
\end{align}
Moreover, if $\sigma_0 \in \Sigma_n^{\rho}$, 
for any $\hat k \in \mathbb{Z}_+$ we have
\begin{align}
    \Pr\Big[\underset{ 1\leq i \leq n}{\max} \big\lVert \alpha(\sigma_{(\hat k-1)n+i}) - \hat{e} \big\rVert_\infty > q \rho \Big] \leq 2q\hat ke^{-c\sqrt{n}} .\label{eqn:BurnIn:RhoBoundedConfigAndSpinFrac:endFrac}
\end{align}
\end{lem}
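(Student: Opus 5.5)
Our approach is to compare the scan with a deterministic recursion for the running spin fractions, with concentration supplied by martingale bounds. During a full scan we write $\sigma_{(k-1)n+j}$ for the configuration after the $j$-th site update of scan $k$. The vertex $v_j$ receives spin $i$ with probability proportional to $\exp\{\beta\,\alpha_i^{(-j)}\}$. Here $\alpha^{(-j)}$ is the proportions vector of the other $n-1$ vertices, which splits into the already updated block $v_1,\dots,v_{j-1}$ (current scan) and the not yet updated block $v_{j+1},\dots,v_n$ (previous scan). Given the past, these update probabilities are $1/n$-Lipschitz functions of the past spins. Hence for every fixed $i$ and every spin $s$, the count $\sum_{\ell\le i}\mathbf 1(\sigma(v_\ell)=s)$ minus the sum of its conditional probabilities is a martingale with bounded increments. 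Azuma--Hoeffding then gives deviation at most $n^{3/4}\rho/4$ with probability $1-2e^{-c\sqrt n}$; this is where the $e^{-c\sqrt n}$ and the $n^{3/4}$ threshold come from. A union bound over $q$ spins and over the relevant indices or scans produces the factor $2qk$. The polynomially many indices $i$ in the definition of $\Sigma_n^\rho$ can be absorbed by lowering $c$, or handled with Doob's maximal inequality applied to the martingale.

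For \eqref{eqn:BurnIn:RhoBoundedConfigAndSpinFrac:ConstTime}, the plan is to show that after $k$ scans, on the martingale good event, the empirical profile $x\mapsto$ (fraction of each spin among $v_{\lfloor xn\rfloor},\dots,v_n$) is within $\rho$ of the trajectory of a deterministic dynamical system $F$. This system maps a spin-density profile on $[0,1]$ to the next-scan profile via the integral equation $\phi_{t+1}(x)=\mathrm{softmax}\bigl(\beta(\int_0^x\phi_{t+1}+\int_x^1\phi_t)\bigr)$. We then need a contraction statement: for $\beta<\beta_s$, iterating $F$ from any initial profile drives the profile uniformly into a $\rho/2$-neighbourhood of the constant $\hat e$ within $k(\rho)$ iterations. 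Our argument for this uses that $\hat e$ is the unique fixed point of the mean-field map $\alpha\mapsto\mathrm{softmax}(\beta\alpha)$ below $\beta_s$, together with the fact that each site update is a softmax of a convex average of current proportions. So the maximal coordinate of the running proportions is dominated by the one-dimensional map $a\mapsto \frac{e^{\beta a}}{e^{\beta a}+(q-1)e^{\beta(1-a)/(q-1)}}$, and similarly for the minimal coordinate. Below $\beta_s$ this map has $1/q$ as its only fixed point and is globally attracting, so a finite number of iterations suffices. We expect this step to be the main obstacle: controlling the worst-case profile (not just its total counts) under the non-reversible sequential update requires the monotone sandwich argument to be applied to each suffix average, not only to the global proportions.

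For \eqref{eqn:BurnIn:RhoBoundedConfigAndSpinFrac:endFrac}, assume $\sigma_0\in\Sigma_n^\rho$. We argue by induction on scans that $\Sigma_n^\rho$ is preserved with probability $1-2qe^{-c\sqrt n}$ per scan; this is the $k=1$ case of the same argument, since the dominating map pulls averages that are $\rho$-close to $\hat e$ even closer. During scan $\hat k$ at time $i$, the proportions vector combines the updated prefix with the suffix $v_{i+1},\dots,v_n$. The suffix is $\rho$-balanced when $n-i>n^{3/4}$ because $\sigma_{(\hat k-1)n}\in\Sigma_n^\rho$. The prefix is $\rho$-balanced by the martingale estimate when $i>n^{3/4}$, since its conditional probabilities are softmaxes of near-$\hat e$ vectors. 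In the boundary cases each block has fewer than $n^{3/4}$ vertices and contributes $o(1)$. Adding the two errors coordinatewise, with slack, gives the bound $q\rho$. A union bound over the $\hat k$ scans completes the argument.
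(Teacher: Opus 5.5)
Your ingredients match the paper's. You use the map $G_\beta$ to dominate the probability of any fixed spin (since $g^j_\beta(x)\le G_\beta(x_j)$ on the simplex), and the global attraction of $1/q$ under $G_\beta$ for $\beta<\beta_s$. You also use concentration over windows of length at least $n^{3/4}$, and union bounds over spins and scans. Your true martingale with a union bound over the $O(n^2)$ windows is a fine substitute for the paper's supermartingale and maximal Azuma inequality.

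\textbf{The key step is missing.} The step you flag as the main obstacle is the heart of the proof, and you leave it open. What is needed is a bootstrap along the scan, which is the paper's Lemma~\ref{lem:burnIn:XtDtBoundedImpliesYtBounded}. Suppose that at the start of a scan every suffix average of spin $1$ over more than $n^{3/4}$ vertices is at most $a$, and every window fluctuation is below $\varepsilon n^{3/4}$. Then, by induction on the position $\ell$ in the scan, the running fraction of spin $1$ stays at most $a+n^{-1/4}$. The reason is that the untouched suffix contributes at most $(n-\ell)a$ (or at most $n^{3/4}+1$ once it is shorter than $n^{3/4}$). The freshly updated prefix contributes at most $\ell\,G_{\beta,n}(a+n^{-1/4})+\varepsilon n^{3/4}\le \ell(a-\varepsilon)+\varepsilon n^{3/4}$. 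Hence every update in the scan chooses spin $1$ with probability at most $G_{\beta,n}(a+n^{-1/4})$, and all suffix averages at the end of the scan are at most $G_{\beta,n}(a)+2\varepsilon$.

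Iterating $a\mapsto G_{\beta,n}(a)+\varepsilon$ directly on the empirical process gives the first bound. The continuum system $F$, and the claim that the empirical profile tracks its trajectory, are then superfluous. As written, that claim would also need a stability estimate for $F$ over $k$ scans, which you do not provide. The same bootstrap is what makes your part-two assertion non-circular: you say \emph{the prefix is $\rho$-balanced since its conditional probabilities are softmaxes of near-$\hat e$ vectors}, but those vectors contain the prefix itself.

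\textbf{The lower side is handled incorrectly.} The claim \emph{similarly for the minimal coordinate} is false for $q\ge 3$. Minimizing $g^j_\beta(x)$ at fixed $x_j$ puts all remaining mass on a single other spin, giving $e^{\beta x}/(e^{\beta x}+e^{\beta(1-x)}+q-2)$. By convexity of the exponential, this is strictly below $1/q$ at $x=1/q$, so there is no one-dimensional lower envelope with fixed point $1/q$. Lower bounds come only from the upper bounds on the other $q-1$ spins, at the cost of a factor $q-1$.

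Consequently your part-two induction is not justified. You assert that $\Sigma_n^\rho$ itself is preserved each scan by \emph{the same argument}. But from $\sigma_{(\hat k-1)n}\in\Sigma_n^\rho$, the upper domination only returns suffix averages at most $1/q+\rho$ for every spin, i.e.\ membership in $\Sigma_n^{(q-1)\rho}$. Tracked this way, the two-sided radius degrades further at every scan. A separate two-sided box argument might rescue invariance, but the domination you invoke does not.

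The fix, which is what the paper does, is to take the one-sided set $\Sigma_n^{\rho+}$ (all spins' suffix averages at most $1/q+\rho$) as the invariant. It is preserved on the good event because $G_{\beta,n}(1/q+\rho)+2\varepsilon<1/q+\rho$. Within each scan, the running fraction of every spin is then at most $1/q+\rho+n^{-1/4}$, hence at least $1/q-(q-1)(\rho+n^{-1/4})$. This is exactly where the $q\rho$ in the statement comes from. Your final step of \emph{adding the two errors coordinatewise} claims a two-sided $\rho+o(1)$ bound that your argument does not deliver.
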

We note that \eqref{eqn:BurnIn:RhoBoundedConfigAndSpinFrac:ConstTime} establishes that the chain enters the set $\Sigma^\rho_n$ after a constant number of scans with high probability, whereas \eqref{eqn:BurnIn:RhoBoundedConfigAndSpinFrac:endFrac} shows that once the chain enters $\Sigma^\rho_n$, it will remain also with high probability.
To prove \eqref{eqn:BurnIn:RhoBoundedConfigAndSpinFrac:ConstTime}, we show that the proportions vector has drift towards the $\hat{e}$. More precisely, we analyze the evolution of fraction of vertices in a spin class by decomposing into a drift and a fluctuation component. 
The drift component of each coordinate is then bounded by a one-dimensional recursive function that iterates towards its fixed point $1/q$. For every fixed constant $\rho > 0$ (independent of $n$), the recursive function enters the interval $[1/q, 1/q + \rho]$ after a constant number of iterations. 
To analyze the fluctuation component of the process, we show first that it is a supermartingale, and then we use the Doob decomposition to split it into a predictable non-increasing process and a martingale, with  the martingale fluctuations controlled via a concentration of its maximum over an interval; more precisely, via a Maximal-Azuma inequality, a tailored version of which we establish here.

The proof of \eqref{eqn:BurnIn:RhoBoundedConfigAndSpinFrac:endFrac} follows from a similar argument. Once the proportions vector is close to $\hat{e}$ and the configuration has a good spread, the drift helps the process to remain close to $\hat{e}$. Then, the corresponding martingale estimate gives the desired high probability statement.  

In \textbf{Phase 2}, we couple the systematic scan dynamics using the optimal single site coupling; i.e., the coupling that maximizes the probability of agreement between the two copies of the chain at the update site conditional on the configuration off the site. We are able to establish 
that the Hamming distance $d_H$ between the two chains drops from $O(n)$ to $O(\sqrt{n})$ in $O(\log n)$ steps, and that at the same time, the distance of the spin fraction from $\hat{e}$ drops from $O(1)$ to $O({1}/{\sqrt{n}})$.

\begin{lem}\label{lem:Coupling:ToTheRootNDistance}
Let $\rho > 0$ small enough and $\sigma_0, \tilde{\sigma}_0 \in \Sigma_n^\rho$.
Then, 
for any fixed constants $C, \varepsilon > 0$, there exist an integer $k = k (\beta, q, C, \varepsilon)$ such that with probability at least $\varepsilon$ for $T = c(\beta, q) \log (n) + k$ we have $ d_H(\sigma_{Tn}, \tilde{\sigma}_{Tn}) \leq C \sqrt{n}$.
\end{lem}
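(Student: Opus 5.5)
The plan is to couple the two copies with the optimal single-site (maximal) coupling at every site update. We then track, scan position by scan position, the \emph{disagreement profile}. For each spin $i$, let $D^+_i(t)$ be the number of vertices $v$ with $\sigma_t(v)=i\neq\tilde\sigma_t(v)$, and let $D^-_i(t)$ be the number with $\tilde\sigma_t(v)=i\neq\sigma_t(v)$. Write $\delta(t)=\alpha(\sigma_t)-\alpha(\tilde\sigma_t)$.

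The heat-bath probability of spin $i$ at a vertex $v$ is $\propto \exp(\beta \alpha_i^{(-v)})$. Here $\alpha^{(-v)}$ differs from $\alpha$ by $O(1/n)$. The two update laws at $v$ therefore differ in total variation by
\[
\tfrac{\beta}{q}\|\delta\|_1/2 \cdot (1+O(\rho)) + O(1/n),
\]
since near $\hat e$ the map $\alpha\mapsto \mathrm{softmax}(\beta\alpha)$ has Jacobian $\frac{\beta}{q}(I-\frac1q J)+O(\rho)$. By Lemma~\ref{lem:BurnIn:RhoBoundedConfigAndSpinFrac}, applied to both copies simultaneously, both chains stay within $q\rho$ of $\hat e$ for all site updates in the $O(\log n)$ scans, except with probability $O(\log n\, e^{-c\sqrt n})$. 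On this event, the expected evolution of the Hamming distance $d_t=d_H(\sigma_t,\tilde\sigma_t)$ during a scan is governed by two effects. When vertex $v$ is updated, an old disagreement at $v$ is erased, and a new one is created with probability about $\frac{\beta}{2q}\|\delta(t)\|_1$. The difference vector $\delta$ in turn changes as these replacements occur.

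The key step is to write the resulting continuous-time (in units of $1/n$ per site update) linear system and identify its rate. Let $s\in[0,1]$ be the fraction of the current scan completed, and let $x(s)$ be the mean $\ell_1$-type discrepancy vector. The new disagreements created in $[0,s]$ carry spin differences in the direction of $\delta$ with magnitude $\frac{\beta}{q}\int_0^s \delta$. The old ones at not-yet-updated positions carry the old discrepancy, and by the good-spread property of $\Sigma_n^\rho$ this equals $(1-s)\delta(0)$ up to $O(\rho)$. This gives $\delta'(s)\approx \frac{\beta}{q}\delta(s)-\delta_{\mathrm{old}}$ per unit $s$. Solving gives a per-scan contraction factor $\lambda<1$ for $\|\delta\|$. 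It should be exactly the quantity for which $e^{-b}$ solves $\beta(1-e^{-b})/(qb)=e^{-b}$, which is why $c(\beta,q)=1/(2b)$. The Hamming distance is then driven by $\|\delta\|_1$: after each scan, $d_{(k+1)n}\approx \frac{\beta}{2q}\sum_s n\|\delta\|_1$ plus the surviving fraction. So $\mathbb E[d]$ contracts by $e^{-b}(1+O(\rho))$ per scan, once $\delta$'s decay dominates. I will make $\rho$ small enough that the $O(\rho)$ corrections cost only an additive $O(1)$ in the number of scans. This requires a stability argument: the linearized operator has a spectral gap, and since $\rho$ can be taken to decay I can use $\rho_k\to 0$ via Lemma~\ref{lem:BurnIn:RhoBoundedConfigAndSpinFrac} to get $\prod(1+O(\rho_k))=O(1)$.

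The main obstacle is controlling fluctuations. In expectation, $\|\delta\|$ shrinks to $O(\lambda^k)$, but $\delta$ has intrinsic noise of order $1/\sqrt n$ per scan. So $\|\delta_{kn}\|$ behaves like $\lambda^k+O(1/\sqrt n)$, and $d$ like $n e^{-bk}+O(\sqrt n)$. I would handle this with a drift plus second-moment bound. Each site update changes $\delta$ by $O(1/n)$ with conditional variance $O(d_t/n^2+\|\delta\|/n)$, so $\mathbb E\|\delta_{(k+1)n}\|^2\le \lambda^2\mathbb E\|\delta_{kn}\|^2+O(\mathbb E d_{kn}/n^2)+O(1/n)$. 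Iterating this jointly with $\mathbb E d_{(k+1)n}\le e^{-b}(1+O(\rho))\mathbb E d_{kn}+O(n\,\mathbb E\|\delta_{kn}\|)$ yields the bound $\mathbb E d_{Tn}\le C'(n e^{-bT'}+\sqrt n)$ after $T'$ scans, where the rate is $e^{-b}$ for $d$ but the $\sqrt n$ floor comes from the $\delta$-noise.

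Taking $T=\frac{1}{2b}\log n+k$ gives $\mathbb E d_{Tn}\le C'(e^{-bk}+1)\sqrt n$. Markov's inequality then gives $d_{Tn}\le C\sqrt n$ with probability at least $1-C'(1+e^{-bk})/C$. This is at least $\varepsilon$ once $C$ is large; for the stated arbitrary $C$, I would instead use the $O(\sqrt n)$ floor only through the fluctuation term and choose $k$ large, accepting a constant-probability bound. That is all the lemma requires, since it asks only for probability at least $\varepsilon$.
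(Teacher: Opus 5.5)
The decisive gap is the $\sqrt n$ floor. From $\mathbb{E}\,d_{Tn}\le C'(e^{-bk}+1)\sqrt n$, Markov gives nothing when $C\le C'$, and enlarging $k$ only shrinks the $e^{-bk}$ part. Your fallback of ``a constant-probability bound'' does not meet the statement. Both $\varepsilon$ and $C$ are arbitrary, so the probability must be made arbitrarily close to $1$ for every fixed $C$. The lemma is also used precisely with $C=C(\varepsilon)$ small, because the Pinsker step in Lemma~\ref{lem:coupling:FullScanCouplingProb} only gives total variation $O(C)$.

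The floor is an artifact of the additive $O(1/n)$ term in your second-moment recursion for $\delta$. Under the optimal coupling, the new-spin part of an increment of $\delta$ is nonzero only on a disagreement event. So the noise of $\delta$ scales with $d_H$, not with the $n^{-1/2}$ fluctuations of each chain separately, and the true floor of $\mathbb{E}\,d_H$ is $O(1)$.

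The paper never tracks $\delta$. It uses $\|\delta\|_1\le 2d_H/n$ and Lemma~\ref{lemma:g:taylor}(ii) to get
\[
\Pr[J_i\neq\tilde J_i\mid\mathcal F_{i-1}]\le \frac{\beta}{nq}\,d_H(\sigma_i,\tilde\sigma_i)+C_2\big(\|\alpha(\sigma_i)-\hat e\|_2^2+\|\alpha(\tilde\sigma_i)-\hat e\|_2^2\big)+\frac{C_2}{n},
\]
sums over a scan, and inserts Lemma~\ref{lem:spinFraction:ExpectContraction}. This yields a closed linear recursion for $\mathbb{E}\,d_H$ with kernel $\frac{\beta}{nq}\sum_{i=t-n}^{t-1}$ and forcing $O(n\gamma_n^{2t})+O(1)$. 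Lemma~\ref{lem:recur:upperBoundThreeSeq} then gives $\mathbb{E}\,d_H(\sigma_t,\tilde\sigma_t)\le \hat C n\gamma_n^t+\hat C$. At $t=Tn$ this is $O(e^{-bk}\sqrt n)+O(1)$, and Markov works for every fixed $C$.

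A second step is unsupported: treating the nonlinearity as a relative factor $(1+O(\rho))$ on the per-scan rate. With $\rho$ fixed over $\Theta(\log n)$ scans, this can cost order $\rho\log n$ extra scans, not $O(1)$. Lemma~\ref{lem:BurnIn:RhoBoundedConfigAndSpinFrac} does not supply summable $\rho_k\to 0$ to fix this: it is stated for a fixed $\rho$, the number of scans it needs grows as $\rho\to 0$, and its failure bound $e^{-c\sqrt n}$ has $c=c(\rho)\to 0$. The paper sidesteps this by writing the nonlinearity as the additive quadratic error in the display above. For that form, the decay $\mathbb{E}\|\alpha(\sigma_t)-\hat e\|_2^2\le C\rho^2\gamma_n^{2t}+C/n$ holding only in expectation suffices.

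Two smaller points are also off. Good spread controls $\alpha-\hat e$ on tails of the scan order, not the disagreement vector there. So ``old discrepancy $=(1-s)\delta(0)$ up to $O(\rho)$'' is unjustified, and an additive $O(\rho)$ error is useless once $\|\delta\|\ll\rho$. Also, after a full scan no old disagreement survives, so $d_{(k+1)n}$ is exactly the number of disagreements created during that scan.
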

To establish this lemma we show that for two copies of the systematic scan Markov chain with starting configurations in $\Sigma^\rho_n$, their Hamming distance exhibits an almost uniform geometric decay. To establish this, we show that
the distances between the proportions vectors of the copies and $\hat{e}$ contract at exactly same geometric rate.
Identifying these matching contraction rates determines the number of steps required for the Hamming distance to reach the desired scale (i.e., $O(\sqrt{n})$) and determines the leading constant $c(\beta, q)$ in the mixing time.

For \textbf{Phase 3} of the coupling we show that the relative entropy 
between the distribution of $\sigma_{t+n}$ and $\tilde{\sigma}_{t+n}$ can be bounded in terms of the square of the Hamming distance between $\sigma_{t}$ and $\tilde{\sigma}_{t}$. Then,
since we start the phase from two configurations within $O(\sqrt{n})$ Hamming distance, Pinsker's inequality allows to conclude the following.
\begin{lem}\label{lem:coupling:FullScanCouplingProb}
There exists a coupling such that
for any $\varepsilon > 0$
there is a constant $C = C(\varepsilon,\beta, q)$ such that
for $\sigma_0, \tilde{\sigma}_0 \in \Sigma_n$ with $d_H(\sigma_{0}, \tilde{\sigma}_0) \leq C\sqrt{n}$, we have
$
    \Pr\big[\sigma_{n} \neq \tilde{\sigma}_{n}\big] \leq \varepsilon.
$
\end{lem}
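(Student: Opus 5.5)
The plan follows the strategy announced in the proof sketch. We bound the relative entropy between the laws of $\sigma_n$ and $\tilde\sigma_n$ (one full scan) by $O(d_H(\sigma_0,\tilde\sigma_0)^2/n)$, convert this to total variation via Pinsker, and then take the optimal (maximal) coupling of the two laws. Since a full scan resamples every vertex exactly once, $\sigma_n$ is a function of the random spins assigned at the $n$ site updates. Its law is therefore a product of sequential conditionals:
\[
P^n(\sigma_0,\eta)=\prod_{j=1}^n p_j(\eta(v_j)\mid \eta(v_1),\dots,\eta(v_{j-1}),\sigma_0(v_{j+1}),\dots,\sigma_0(v_n)),
\]
where $p_j(\cdot\mid\cdot)$ is the heat-bath conditional at $v_j$. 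Each such conditional depends only on the counts vector of the other $n-1$ spins, through $p(i)\propto \exp(\beta\, m_i/n)$, with $m_i$ the number of other vertices with spin $i$. By the chain rule for relative entropy,
\[
D\bigl(P^n(\sigma_0,\cdot)\,\|\,P^n(\tilde\sigma_0,\cdot)\bigr)=\sum_{j=1}^n \mathbb{E}_{\eta\sim P^n(\sigma_0,\cdot)}\Bigl[D\bigl(p_j(\cdot\mid \eta_{<j},\sigma_0{}_{>j})\,\|\,p_j(\cdot\mid \eta_{<j},\tilde\sigma_0{}_{>j})\bigr)\Bigr].
\]
In the $j$-th term both conditionals use the same already-updated prefix $\eta_{<j}$. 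They differ only through the unupdated suffix, whose counts vectors differ by at most $2d_H(\sigma_0,\tilde\sigma_0)$ in $\ell_1$.

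Next we bound each one-site divergence. For two Gibbs distributions $p\propto e^{\beta m/n}$ and $\tilde p\propto e^{\beta\tilde m/n}$ on $Q$ with $\|m-\tilde m\|_1\le 2d$, the log-likelihood ratio is uniformly $O(\beta d/n)$. The standard estimate $D(p\|\tilde p)\le \chi^2(p\|\tilde p)\le C\|\log(p/\tilde p)\|_\infty^2$ then gives $D\le C(\beta,q)\, d^2/n^2$, uniformly over all prefixes. This bound needs no balance assumption, which matches the lemma's hypothesis $\sigma_0,\tilde\sigma_0\in\Sigma_n$ arbitrary. Summing over the $n$ sites gives
\[
D\bigl(P^n(\sigma_0,\cdot)\,\|\,P^n(\tilde\sigma_0,\cdot)\bigr)\le C(\beta,q)\,\frac{d_H(\sigma_0,\tilde\sigma_0)^2}{n}\le C(\beta,q)\,C^2 .
\]
By Pinsker, $\|P^n(\sigma_0,\cdot)-P^n(\tilde\sigma_0,\cdot)\|_{\textsc{tv}}\le \sqrt{C(\beta,q)C^2/2}$. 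The coupling in the lemma is the maximal coupling of these two one-scan laws, for which $\Pr[\sigma_n\ne\tilde\sigma_n]$ equals this total variation distance. Choosing the Hamming radius $C=C(\varepsilon,\beta,q)$ small enough that $\sqrt{C(\beta,q)C^2/2}\le\varepsilon$ completes the argument.

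The crux is the first step: identifying the correct chain-rule decomposition so that the discrepancy at each site only involves the suffix. The naive per-site bound would give $O(d/n)$ total variation per site, and summing these linearly yields $O(d)$, which is useless. The quadratic dependence obtained by passing through KL is what brings the total down to $d^2/n=O(1)$. A technical point to verify is that the relative entropy is defined, which holds because every heat-bath conditional has full support on $Q$. If the chain rule feels delicate, an alternative is to bound the Hellinger distance between the product-form densities directly, which yields the same $d^2/n$ scaling.
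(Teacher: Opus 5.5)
Your proposal is correct and follows the paper's proof essentially step for step: the chain-rule decomposition of the one-scan relative entropy into per-site terms that share the same updated prefix, a per-site bound of order $d_H(\sigma_0,\tilde\sigma_0)^2/n^2$ coming from the suffix discrepancy, then Pinsker's inequality and the maximal coupling of the two one-scan laws. The differences are cosmetic. You bound each one-site divergence through $\chi^2$ and the log-likelihood ratio, whereas the paper uses its Taylor-expansion fact together with Lipschitz continuity of $g_\beta$. You also exclude the updated site directly, which avoids the paper's harmless additive $C/n$ term.
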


We can now provide the proof of the mixing time upper bound in Theorem \ref{thm:mainTheorem}. 

\begin{proof}[Proof of Theorem~\ref{thm:mainTheorem} (upper bound)]
For any $\varepsilon \in (0,1)$
we show that $T_{\text{mix}}(\varepsilon) \leq c(\beta,q)\log n + \kappa(\varepsilon)$ for a suitable $\kappa$ independent of $n$. 
By Lemma \ref{lem:BurnIn:RhoBoundedConfigAndSpinFrac}, for any $\rho > 0$
there exists $\kappa_1 = \kappa_1(\beta,q,\rho)$ such that 
$\sigma_{\kappa_1n}, \tilde{\sigma}_{\kappa_1n} \in \Sigma_{n}^\rho$ with probability at least $1 -\varepsilon/3$.
Conditioning on  $\sigma_{\kappa_1n}, \tilde{\sigma}_{\kappa_1n} \in \Sigma_{n}^\rho$ and choosing $\rho$ small enough, 
Lemma \ref{lem:Coupling:ToTheRootNDistance} implies that
we can couple the the two copies of the chain 
so that for any fixed $C > 0$ there exists $\kappa_2 = \kappa_2(\beta,q,C,\varepsilon)$ such that after $T_2 = c(\beta,q) n \log n + \kappa_2 n + \kappa_1 n$ site updates we have that 
$$\Pr\Big[d_H(\sigma_{T_2}, \tilde{\sigma}_{T_2}) \leq C \sqrt{n}\Big] \ge 1 - \varepsilon/3.
$$
For the last phase, Lemma \ref{lem:coupling:FullScanCouplingProb} implies that $\sigma_{T_2 + n} = \tilde{\sigma}_{T_2 + n}$ with probability at least $1 - \varepsilon/3$ for small enough $C$. 
So at $T = T_2 + n$ we obtain:
\begin{align*}
    \Pr[\sigma_{T} \neq \tilde{\sigma}_{T}]  &\leq \frac{\varepsilon}{3}+\frac{\varepsilon}{3}+\frac{\varepsilon}{3} = \varepsilon.
\end{align*}
\end{proof}

To establish the mixing time lower-bound in~\eqref{eq:co}, we use the fact that the distribution $\mu$ is well-concentrated
around the equiproportion vector $\hat e$.
Therefore, if the event $ \{\lVert\alpha(\sigma_{kn}) -\hat{e} \rVert_2 > C/\sqrt{n} \}$ has a significant probability for a sufficiently large constant $C > 0$, the systematic scan dynamics chain cannot be mixed after $k$ steps. Using our precise understanding of the rate of contraction towards $\hat e$ established in the analysis of \textbf{Phase 2}, and standard concentration inequalities, 
we prove that
for any $C > 0$ and any $\varepsilon \in (0,1)$, there exists $\kappa$ independent of $n$ such that
after
$c(\beta,q)\log n - \kappa(\varepsilon, C)$ steps $\Pr[\lVert\alpha(\sigma_t) -\hat{e} \rVert_2 > C/\sqrt{n} ] > 1 - \varepsilon$ where for any fixed $\varepsilon$ the function $\kappa(\varepsilon,\cdot)$ is increasing. 
The lower bound proof is provided in Section~\ref{sec:lower}.

\section{Independent Burn-in}

Our aim in this section is to analyze the \textbf{Phase 1} of our coupling and establish Lemma~\ref{lem:BurnIn:RhoBoundedConfigAndSpinFrac}.
We analyze the evolution of the number of vertices with a given spin (say spin $1$ for concreteness) 
by decomposing this one-dimensional stochastic process into a mostly predictable component determined by a drift function 
and a fluctuation component that is controlled via martingale concentration bounds.

\subsection{Drift analysis}

Consider the drift function $G_\beta : \mathbb{R}_{\geq 0} \to \mathbb{R}_{\geq 0}$ given by
\begin{align*}
    G_\beta(x) &= \frac{e^{\beta x}}{e^{\beta x} + (q-1) e^{\beta \frac{1 - x}{q-1}}}.
\end{align*}
This function approximates the probability that 
a single-site update chooses spin 1
in a configuration that has $x n$ vertices assigned spin 1 and all other $q-1$ spins distributed equally among the remaining $(1-x)n$ vertices. 
The correspondence is not exact in part because the spin of the selected vertex is taken into consideration; to account for this correction we define
$$G_{\beta, n}(x) = G_\beta(x) + { \beta}/{n}.$$
For $q\geq2$, the spinodal point $\beta_s$ can be defined in terms of the drift function $G_\beta$:
\begin{align*}
    \beta_s = \sup\big\{ \beta > 0 : G_\beta(x) - x \neq 0 \:\;\; \forall x \in (1/q, 1) \big\};
\end{align*}
see~\cite{Cuff_2012} for additional details.

We use the function $G_\beta$ to control the evolution of the one-dimensional process corresponding to fraction of vertices assigned spin $1$.
To formalize this, let
$J_t$ be the random variable corresponding to the spin chosen 
in the $t$-th site update and let $X_t = \mathbf{1}(J_t = 1)$. 
We use the convention that for $-n < t \leq 0$, $X_t$ indicates whether the $(t+n)$-th vertex in the initial configuration $\sigma_0$ has spin $1$. We define $Y_t^{(\ell)}$ as the average of the ${X_i}$'s 
for the last $\ell$ updates:
\begin{align}
    Y_t^{(\ell)} &= \frac{1}{\ell} \sum_{i = t-\ell+1}^t X_i. \label{eqn:BurnIn:defintion:Y_t^a}
\end{align}
Observe that $Y_t^{(n)} = \alpha_1(\sigma_t)$;
to simplify the notation we set $Y_t = Y_t^{(n)}$. 
We decompose the process $Y_t^{(\ell)}$ into the sum of 
a drift and a fluctuation component as follows. For all 
$t \geq \ell$ and $1 \leq \ell \leq n$:
\begin{align*}
    Y_t^{(\ell)} = \frac{1}{\ell} \sum_{j = t-\ell+1}^t G_{\beta, n} (Y_{j-1}) + \frac{1}{\ell} \sum_{j = t-\ell+1}^t\left(X_j -G_{\beta, n}(Y_{j-1}) \right),
\end{align*}
and rewrite the fluctuation term as
\begin{align}
    D_t^{(\ell)} = \sum_{j = t-\ell+1}^t\big(X_j -G_{\beta, n}(Y_{j-1}) \big) \label{eqn:BurnIn:defintion:D_t^l},
\end{align}
so that
\begin{align*}
    Y_t^{(\ell)} = \frac{1}{\ell} D_t^{(\ell)} + \frac{1}{\ell} \sum_{j = t-\ell+1}^tG_{\beta,n}(Y_{j-1}).
\end{align*}

Our first lemma shows that 
under the assumption that the random fluctuations are small 
in aggregate over all intervals between $kn$ and $(k+1)n$,
then $\max\nolimits_{n^{3/4} < i  \leq n} Y_{kn}^{(i)}$
has a contraction given by the function $G_{\beta,n}$ up to a small correction.
\begin{lem}\label{lem:burnIn:XtDtBoundedImpliesYtBounded}
Suppose that for some integer $k\geq0$, a fixed constant $a \in (1/q, 1]$, and a sufficiently small $\varepsilon > 0$ that satisfies $G_{\beta,n}(a) + 2\varepsilon < a$, we have
$\max\limits_{n^{3/4} < i  \leq n} Y_{kn}^{(i)} \leq a,$ and
$\max\limits_{1 \leq i\leq j\leq n} D^{(i)}_{k n + j} < \varepsilon n^{3/4}.$ 
Then, 
\begin{align*}
    \underset{n^{3/4} < i  \leq n}{\max} Y^{(i)}_{(k+1)n} \leq 2\varepsilon + G_{\beta, n}(a).      
\end{align*}
Moreover, $\underset{1 \leq i  \leq n}{\max} Y_{kn+i} \leq a + \frac{1}{n^{1/4}}.$
\end{lem}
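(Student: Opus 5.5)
The plan is to prove the ``moreover'' claim first, by induction over the site updates of the $(k+1)$-th scan. The contraction bound then follows directly by expanding the windowed averages with the drift/fluctuation decomposition.

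Two elementary facts about the drift function are needed. First, $G_{\beta}$ is nondecreasing on $[0,1]$: the numerator $e^{\beta x}$ increases in $x$, while the competing term $(q-1)e^{\beta(1-x)/(q-1)}$ decreases. Second, $G_\beta$ is Lipschitz with some constant $L=L(\beta,q)$. Hence, for $n$ large,
$$G_{\beta,n}(a+n^{-1/4}) \le G_{\beta,n}(a)+Ln^{-1/4} \le a-2\varepsilon+Ln^{-1/4} \le a-\varepsilon .$$

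\textbf{Step 1 (bound during the scan).} I would show by induction on $i=0,1,\dots,n$ that $Y_{kn+i}\le a+n^{-1/4}$. The base case is $Y_{kn}=Y^{(n)}_{kn}\le a$, which holds because $n>n^{3/4}$. For the inductive step, split $nY_{kn+i}$ into two parts:
\begin{itemize}
\item the $n-i$ old indicators $X_{kn+i-n+1},\dots,X_{kn}$, whose sum is $(n-i)Y^{(n-i)}_{kn}$;
\item the $i$ new indicators, whose sum is $D^{(i)}_{kn+i}+\sum_{j=kn+1}^{kn+i}G_{\beta,n}(Y_{j-1})$.
\end{itemize}
By the induction hypothesis and the display above, the new part is at most $\varepsilon n^{3/4}+i(a-\varepsilon)$. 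Two cases remain.
\begin{itemize}
\item If $n-i>n^{3/4}$, the hypothesis gives an old part of at most $(n-i)a$. If also $i\ge n^{3/4}$, the new part is at most $ia$. If instead $i<n^{3/4}$, the new part is at most $ia+\varepsilon(n^{3/4}-i)$. Either way $nY_{kn+i}\le na+\varepsilon n^{3/4}$.
\item If $n-i\le n^{3/4}$, the old part is trivially at most $n-i\le (n-i)a+(1-a)n^{3/4}$. Here $i\ge n^{3/4}$, so the new part is at most $ia$, and $nY_{kn+i}\le na+n^{3/4}$.
\end{itemize}
Since $\varepsilon\le 1$ may be assumed, both cases give $Y_{kn+i}\le a+n^{-1/4}$, closing the induction.

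\textbf{Step 2 (contraction of the windows).} Fix $n^{3/4}<i\le n$. The window of length $i$ ending at $(k+1)n$ consists entirely of updates from the current scan. Applying the decomposition of $Y^{(i)}_{(k+1)n}$ gives
$$Y^{(i)}_{(k+1)n}=\frac{1}{i}D^{(i)}_{(k+1)n}+\frac1i\sum_{j=(k+1)n-i+1}^{(k+1)n}G_{\beta,n}(Y_{j-1}).$$
The fluctuation hypothesis, applied with $j=n$, bounds the first term by $\varepsilon n^{3/4}/i<\varepsilon$. Each $Y_{j-1}$ in the sum has index in $[kn,(k+1)n-1]$, so Step 1 and monotonicity bound every summand by $G_{\beta,n}(a)+Ln^{-1/4}$. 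For $n$ large the error $Ln^{-1/4}$ is absorbed into a second $\varepsilon$, which yields $Y^{(i)}_{(k+1)n}\le 2\varepsilon+G_{\beta,n}(a)$. Taking the maximum over $i$ finishes the proof.

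The main obstacle is the boundary bookkeeping in Step 1. When $n-i\le n^{3/4}$, the hypothesis on $Y^{(\cdot)}_{kn}$ gives no information about the short stretch of old spins, and the only available slack is the $n^{-1/4}$ allowance together with the $\varepsilon$-gap between $G_{\beta,n}(a)$ and $a$. The case split above is designed so that the newly updated spins, which number at least $n^{3/4}$ in that regime, absorb the fluctuation term.
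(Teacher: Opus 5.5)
Your proof is correct and follows essentially the same route as the paper. Both arguments use the same decomposition of $nY_{kn+i}$ into the $n-i$ old spins, the fluctuation $D^{(i)}_{kn+i}$, and the drift sum. Both run an induction on $i$ split according to whether $n-i>n^{3/4}$, and both finish with the windowed decomposition of $Y^{(i)}_{(k+1)n}$ combined with monotonicity and the bound $G_{\beta,n}(a+n^{-1/4})\le G_{\beta,n}(a)+\varepsilon$. The only cosmetic difference is in the regime $n-i\le n^{3/4}$: the paper extracts the slightly stronger bound $Y_{kn+i}\le a$ from the $\varepsilon$-gap accumulated over roughly $n$ fresh updates, whereas you settle for $a+n^{-1/4}$, which is all the induction requires.
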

\begin{proof}
We start by rewriting $Y_{kn+ \ell}$ using~\eqref{eqn:BurnIn:defintion:Y_t^a} and~\eqref{eqn:BurnIn:defintion:D_t^l} as
\begin{align}
    Y_{kn + \ell}  &= \frac{1}{n} \sum_{i = (kn + \ell) -n+1}^{kn}X_i + \frac{1}{n} \sum_{i = kn+1}^{kn+\ell} X_i  \notag\\
    &= \frac{(n-\ell)}{n} \frac{1}{(n-\ell)} \sum_{i = kn - (n - \ell)+1}^{kn}X_i + \frac{1}{n} \sum_{i = kn+1}^{kn+\ell} \Big( X_i - G_{\beta,n} (Y_{i-1}) \Big) + \frac{1}{n} \sum_{i = kn+1}^{kn+\ell} G_{\beta,n} (Y_{i-1})   \notag\\
    &= \frac{(n-\ell)}{n}Y_{kn}^{(n - \ell)} + \frac{1}{n} D_{kn + \ell}^{(\ell)} + \frac{\ell}{n} \frac{1}{\ell} \sum_{i = kn+1}^{kn+\ell} G_{\beta,n} (Y_{i-1})\notag\\
    &= \frac{1}{n} D^{(\ell)}_{kn + \ell} + \frac{1}{n} \bigg((n-\ell) Y^{(n-\ell)}_{kn}  + \ell \Big(\frac{1}{\ell}\sum_{i = kn+1}^{kn + \ell} G_{\beta,n} (Y_{i-1})\Big) \bigg) \label{eq:y:rw}.
\end{align}
We show first via induction on $\ell$ that
\begin{align}
    Y_{kn + \ell} \leq a + \frac{1}{n^{1/4}}. \label{eqn:burnIn:XtDtBoundedImpliesYtBounded:conclusionOnl}
\end{align}
for all $\ell=1,\dots,n$. 
This ensures that $Y_{kn + \ell}$ does not deviate much from its initial upper bound of $a$ on the process.
Conditioned on this, we can establish the desired contraction
$\max\nolimits_{n^{3/4} < i  \leq n} Y^{(i)}_{(k+1)n} \leq 2\varepsilon + G_{\beta, n}(a)$.

For $\ell = 1$, under the assumptions of the lemma, and using the fact that $G_{\beta, n}$ is a monotonically increasing function, we obtain from~\eqref{eq:y:rw}
\begin{align*}
    Y_{kn+1} &< \frac{\varepsilon n^{3/4}}{n} + \frac{1}{n}\big((n-1)a + G_{\beta, n}(a)\big) \leq \frac{\varepsilon }{n^{1/4}} + a.
\end{align*}
For the induction step we consider two cases. 
First assume that \eqref{eqn:burnIn:XtDtBoundedImpliesYtBounded:conclusionOnl} holds for all $\ell \in \{1,\dots,\ell_1-1\}$ for some $\ell_1 \in \{1,\dots,n -n^{3/4}-1\}$. From \eqref{eq:y:rw}, the monotonicity of $G_{\beta, n}$, the lemma assumptions, and the inductive hypothesis, we get
\begin{align*}
    Y_{kn + \ell_1}  
        &= \frac{1}{n} D^{(\ell_1)}_{kn + \ell_1} + \frac{1}{n} \bigg((n-\ell_1) Y^{(n-\ell_1)}_{kn}  + \ell_1 \Big(\frac{1}{\ell_1}\sum_{i = kn+1}^{kn + \ell_1} G_{\beta,n} (Y_{i-1})\Big) \bigg) \\ 
        &< \frac{\varepsilon}{n^{1/4}} + \frac{1}{n} \bigg((n-\ell_1)a +  \ell_1G_{\beta,n}\Big(a + \frac{1}{n^{1/4}}\Big) \bigg).
        \end{align*}
Now, by the mean value theorem, since the derivative of $G_{\beta,n}$ is bounded in the interval $\big[a,a + \frac{1}{n^{1/4}}\big]$, we have $G_{\beta,n}(a + \frac{1}{n^{1/4}}) = G_{\beta,n}(a) + O( \frac{1}{n^{1/4}}) \le a - \varepsilon $ for large enough $n$. Then,
        \begin{align*}
        Y_{kn + \ell_1}  &\leq \frac{\varepsilon}{n^{1/4}} + \frac{1}{n} \Big((n-\ell_1)a +  \ell_1(a -\varepsilon ) \Big) \\
        &\leq \frac{\varepsilon}{n^{1/4}} + \frac{1}{n} \big(na - \ell_1\varepsilon  \big)
        = a + \frac{\varepsilon}{n^{1/4}} - \frac{\ell_1}{n} \varepsilon \leq a + \frac{1}{n^{1/4}}.
\end{align*}
Thus, the claim holds for all $1 \leq\ell <n - n^{3/4}$. 
For the remaining range, let us assume that
the claim holds for all $\ell \in \{1,\dots,\ell_2-1\}$ for some $\ell_2\in\{n - n^{3/4} - 1,\dots, n\}$. Then, similarly
\begin{align*}
        Y_{kn + \ell_2}  
        &= \frac{1}{n} D^{(\ell_2)}_{kn + \ell_2} + \frac{1}{n} \bigg((n-\ell_2) Y^{(n-\ell_2)}_{kn}  + \ell_2 \Big(\frac{1}{\ell_2}\sum_{i = kn+1}^{kn + \ell_2} G_{\beta,n} (Y_{i-1})\Big) \bigg) \\ 
        &< \frac{\varepsilon}{n^{1/4}} + \frac{1}{n} \bigg(n-\ell_2 +  \ell_2 G_{\beta,n}\Big(a + \frac{1}{n^{1/4}}\Big) \bigg) \\
          &\leq \frac{\varepsilon}{n^{1/4}} + \frac{1}{n} \bigg((n^{3/4}+1) +  n (a - \varepsilon) \bigg) = a - \varepsilon +\frac{\varepsilon}{n^{1/4}} + \frac{1}{n^{1/4}} +\frac{1}{n}  \leq a,
\end{align*}
and thus \eqref{eqn:burnIn:XtDtBoundedImpliesYtBounded:conclusionOnl} follows. 

To establish that 
$\max\nolimits_{n^{3/4} < i  \leq n} Y^{(i)}_{(k+1)n} \leq 2\varepsilon + G_{\beta, n}(a)$,
observe that the lemma assumptions and \eqref{eqn:burnIn:XtDtBoundedImpliesYtBounded:conclusionOnl} imply for all $\ell \in \{n^{3/4} + 1, \dots, n\}$,
\begin{align*}
    Y_{(k+1)n}^{(\ell)} = \frac{1}{\ell} D_{(k+1)n}^{(\ell)} + \frac{1}{\ell} \sum_{j = (k+1)n-\ell+1}^{(k+1)n}G_{\beta,n}(Y_{j-1}) 
    &< \frac{\varepsilon n^{3/4}}{\ell} +  \frac{1}{\ell} \sum_{j = (k+1)n-\ell+1}^{(k+1)n}G_{\beta,n}\Big(a + \frac{1}{n^{1/4}}\Big)  \\
    &\leq 2\varepsilon +  G_{\beta,n}(a ),
\end{align*}
which proves our claim.
\end{proof}

\subsection{Fluctuation analysis}

Our goal now is to bound the fluctuations $D_t^{(\ell)}$ of the $Y_t^{(\ell)}$ process. For this, consider the process 
\begin{align*}
    M_t = \sum_{i = 1}^t (X_i - G_{\beta, n}(Y_{i-1})) ;
\end{align*}
observe that $D_t^{(\ell)} = M_t - M_{t-\ell}$. We establish first that the process $M_t$ is a supermartingale.

\begin{lem}\label{lem:burnIn:MtIsaMartingale}
$\{M_t\}_{t > 0}$ is a supermartingale.
\end{lem}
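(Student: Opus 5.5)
The plan is to verify the supermartingale property directly from the heat-bath update rule. Let $\mathcal{F}_t$ be the $\sigma$-algebra generated by $\sigma_0$ and the first $t$ site updates, so that $\sigma_t$, $Y_t$ and $M_t$ are $\mathcal{F}_t$-measurable. Since $|X_i - G_{\beta,n}(Y_{i-1})| \le 1 + \beta/n$, each $M_t$ is bounded, hence integrable. Because $M_{t+1} - M_t = X_{t+1} - G_{\beta,n}(Y_t)$ and $Y_t$ is $\mathcal{F}_t$-measurable, it suffices to show
\[
\Pr[J_{t+1} = 1 \mid \mathcal{F}_t] \le G_\beta(Y_t) + \beta/n .
\]

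\textbf{Step 1: the update probability.} Let $v$ be the vertex updated at step $t+1$. Write $m_j$ for the number of vertices other than $v$ with spin $j$ in $\sigma_t$, so that $\sum_j m_j = n-1$. The heat-bath rule gives
\[
\Pr[J_{t+1}=1 \mid \mathcal{F}_t] = \frac{e^{\beta m_1/n}}{e^{\beta m_1/n} + \sum_{j\neq 1} e^{\beta m_j/n}} .
\]
By convexity of the exponential (Jensen's inequality),
\[
\sum_{j\ne1} e^{\beta m_j/n} \ge (q-1)\, e^{\beta (n-1-m_1)/((q-1)n)} .
\]
Substituting this lower bound on the denominator gives
\[
\Pr[J_{t+1}=1\mid\mathcal{F}_t] \le \frac{e^{\beta x'}}{e^{\beta x'} + (q-1)e^{\beta(1-1/n - x')/(q-1)}}, \qquad x' = m_1/n .
\]

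\textbf{Step 2: comparison with $G_\beta$.} The right-hand side above differs from $G_\beta(Y_t)$ for two reasons.
\begin{itemize}
\item The first is that $x' \in \{Y_t - 1/n,\, Y_t\}$, depending on whether $\sigma_t(v) = 1$. Since $G_\beta$ is increasing, $x' \le Y_t$ lets us replace $x'$ by $Y_t$ in the numerator-dominated form.
\item The second is the extra $-1/n$ in the other-spins exponent. Writing the bound as $1/(1 + (q-1)e^{-\beta h})$, lowering the competing exponent by $\beta/((q-1)n)$ is the same as shifting the argument of the logistic function $s \mapsto 1/(1+e^{-s})$ by at most $\beta/((q-1)n)$.
\end{itemize}
Since the logistic function is $1/4$-Lipschitz, the total error is at most $\beta/(4(q-1)n) \le \beta/n$. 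Equivalently, one can use the mean value theorem with $|G_\beta'| \le \beta q/(4(q-1)) \le \beta/2$, handling both perturbations (each of size $O(1/n)$ in the argument) as in Lemma~\ref{lem:burnIn:XtDtBoundedImpliesYtBounded}. Together these give
\[
\Pr[J_{t+1}=1\mid\mathcal{F}_t] \le G_\beta(Y_t) + \beta/n = G_{\beta,n}(Y_t).
\]

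\textbf{Conclusion.} Taking conditional expectations,
\[
\mathbb{E}[M_{t+1}-M_t\mid \mathcal{F}_t] \le 0,
\]
which is the supermartingale property. The convention for $-n<t\le 0$ only concerns the definition of $Y_t$ through $\sigma_0$ and does not affect the argument.

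The only delicate point is the bookkeeping in Step 2. One must check that removing $v$'s own spin, which can lower $m_1$ by one and changes the total from $n$ to $n-1$, always moves the bound in a direction absorbed by the additive $\beta/n$. This reduces to the monotonicity of $G_\beta$ together with the Lipschitz bound above.
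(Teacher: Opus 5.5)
Your proof is correct and is essentially the paper's argument. The paper's one-line proof invokes Lemma~\ref{lem:BurnIn:XFractionDominatesGibbsProbability}, which uses the same two ingredients, the convexity (equal-split) bound $g^j_\beta(s)\le G_\beta(s_j)$ and a mean-value estimate absorbing the $O(1/n)$ shift from removing $v$'s own spin; you apply them in the opposite order (Jensen on the mass-$(1-1/n)$ vector first, then monotonicity and the $1/4$-Lipschitz logistic bound), which gives the slightly sharper error $\beta/(4(q-1)n)$ instead of $\beta/n$.
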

The proof of this lemma is provided in Section~\ref{subsec:aux}.
We bound the maximum supermartingale difference $M_i-M_j$ over a time interval using a Maximal-Azuma inequality.
We use a small variation of a version of this type inequality that appears in~\cite{roch_mdp_2024}. A proof is provided below for completeness.

\begin{lem}\label{lem:BurnIn:MartingaleAzumaBound}
Let $\{Z_t\}_{t \geq 1}$ be a bounded discrete-time supermartingale.
Suppose there exists $R \in \mathbb{R}_+$ such that
$|Z_{t+1} - Z_{t}| \leq R$  for all integer $t \geq 1$. Then, for any positive integers $t_1, t_2$ such that $1 \leq t_1  < t_2$ and $d > 0$ we have
\begin{align*}
    \Pr\Big[\underset{t_1 \leq i < j\leq t_2}{\max} Z_j - Z_i > d \Big] \leq  2e^{-\frac{d^2}{32(t_2 - t_1)R^2}}.
\end{align*}
\end{lem}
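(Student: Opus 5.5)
We plan to reduce the two-sided maximal increment to one-sided maximal deviations of a martingale started at $t_1$. The first step is the Doob decomposition on $[t_1,t_2]$: write $Z_t = Z_{t_1} + N_t - A_t$ for $t \ge t_1$. Here
\[
N_t=\sum_{s=t_1}^{t-1}\big(Z_{s+1}-\mathbb{E}[Z_{s+1}\mid\mathcal F_s]\big)
\]
is a martingale with $N_{t_1}=0$. The term
\[
A_t=\sum_{s=t_1}^{t-1}\big(Z_s-\mathbb{E}[Z_{s+1}\mid\mathcal F_s]\big)
\]
is predictable and non-decreasing, because $Z$ is a supermartingale. For $i<j$ this gives $Z_j-Z_i = (N_j-N_i)-(A_j-A_i)\le N_j-N_i$. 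So it suffices to bound $\Pr[\max_{t_1\le i<j\le t_2}(N_j-N_i)>d]$.

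Next we bound the martingale increments. Since $|Z_{s+1}-Z_s|\le R$, we have $\mathbb{E}[Z_{s+1}\mid\mathcal F_s]\in[Z_s-R,Z_s+R]$. Hence each increment of $N$ satisfies $|N_{s+1}-N_s|\le 2R$.

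We then use the triangle inequality $N_j-N_i\le |N_j|+|N_i|\le 2\max_{t_1\le t\le t_2}|N_t|$. This gives
\[
\Pr\Big[\max_{i<j}(N_j-N_i)>d\Big]\le \Pr\Big[\max_{t}N_t>d/2\Big]+\Pr\Big[\max_t(-N_t)>d/2\Big].
\]
Each term is controlled by Doob's maximal inequality applied to the nonnegative submartingale $e^{\lambda N_t}$ (respectively $e^{-\lambda N_t}$). Combined with the Azuma–Hoeffding bound $\mathbb{E}[e^{\lambda N_{t_2}}]\le e^{\lambda^2 (2R)^2 (t_2-t_1)/2}$, this yields
\[
\Pr\Big[\max_{t_1\le t\le t_2} N_t > d/2\Big]\le \exp\Big(-\lambda d/2 + 2\lambda^2R^2(t_2-t_1)\Big).
\]
Optimizing with $\lambda = d/(8R^2(t_2-t_1))$ gives the bound $e^{-d^2/(32(t_2-t_1)R^2)}$. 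The same bound holds for $-N$. Summing the two gives the factor $2$ and exactly the stated constant $32$.

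The main point needing care is the factor of $2$ lost twice: once in the increment bound $2R$ and once in splitting $d$ into $d/2$. We must make sure these combine to give $32$ rather than something worse; the computation above shows they do. We also need to check that boundedness of $Z$ makes all exponential moments finite, so that Doob's inequality applies.
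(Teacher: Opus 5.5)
Your proposal is correct and follows essentially the same route as the paper: a Doob decomposition giving $Z_j - Z_i \le N_j - N_i$ for a martingale with increments bounded by $2R$, the split of $N_j - N_i > d$ into two one-sided maximal deviations exceeding $d/2$, and a maximal Azuma bound applied to both $N$ and $-N$. The only difference is that you derive the maximal Azuma inequality yourself, via Doob's maximal inequality for $e^{\lambda N_t}$ and Hoeffding's lemma optimized at $\lambda = d/(8R^2(t_2-t_1))$, whereas the paper cites it from \cite{roch_mdp_2024}; your computation correctly recovers the constant $32$.
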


\begin{proof}
Using Doob decomposition, we let $Z_t = W_t + A_t$ where $\{W_t\}_{t \geq 1}$ is a martingale and $\{A_t\}_{t \geq 1}$ is a predictable monotonically decreasing process where $A_1 = 0$, and for $t > 1$ 
\begin{align*}
     A_t = \sum_{i = 2}^t \big(\mathbb{E}[Z_i \mid \mathcal{F}_{i-1}] - Z_{i-1}\big), 
\end{align*}
where $\mathcal{F}_{i-1}$ denotes the filtration up to $i-1$.
Since the supermartingale $\{Z_t\}_{t \geq 1}$ has bounded increments and $A_t - A_{t-1} = \mathbb{E}[Z_t - Z_{t-1} | \mathcal{F}_{t-1}]$ for any $t > 1$, it follows that $|W_t - W_{t-1}| \leq 2R$. Hence, the Maximal-Azuma inequality from \cite{roch_mdp_2024} yields that
\begin{align}
\label{eq:max:mart}
    \Pr\left[\underset{t_1 \leq t \leq t_2}{\max} \left(W_t - W_{t_1}\right) > \frac{d}{2} \right] & \leq e^{-\frac{2 (d/2)^2}{(t_2 - t_1) (4R)^2}} = e^{-\frac{ d^2}{32(t_2 - t_1)R^2}}.
\end{align}
To use this bound, we note that
\begin{align*}
    \Pr\left[\underset{t_1 \leq i < j\leq t_2}{\max} W_j - W_i > d\right] & \leq \Pr\left[\underset{t_1 \leq t \leq t_2}{\max} W_t - W_{t_1}  > \frac{d}{2} \right] + \Pr\left[\underset{t_1 \leq t \leq t_2}{\min} W_t - W_{t_1} < -\frac{d}{2}\right].
\end{align*}
Then, using~\eqref{eq:max:mart} for both $\{W_t\}$ and $\{-W_t\}$, which is also a bounded difference martingale, we obtain
\begin{align*}
    \Pr\left[\underset{t_1 \leq i < j\leq t_2}{\max} W_j - W_i > d\right] & \leq 2e^{-\frac{d^2}{32(t_2 - t_1)R^2}}.
\end{align*}
For all integers $j > i \geq 1$, we have $A_j \leq A_i$, and thus $Z_j - Z_i \leq W_j - W_i$. Therefore,
\begin{align*}
    \Pr \left[\underset{t_1 \leq i < j\leq t_2}{\max} Z_j - Z_i > d \right]  \leq \Pr \left[\underset{t_1 \leq i < j\leq t_2}{\max} W_j - W_i > d \right]
        & \leq 2e^{-\frac{ d^2}{32(t_2 - t_1)R^2}},
\end{align*}
which completes the proof.
\end{proof}

\subsection{Combining drift and fluctuation analysis}

The next ingredient of the proof is to argue
that repeated applications of Lemma~\ref{lem:burnIn:XtDtBoundedImpliesYtBounded}
decrease the distance to equiproportionality. 
For this, we introduce the function $G_{\beta, n}^\varepsilon : \mathbb{Z}_{+} \to \mathbb{R}_+$ defined recursively
by setting $G_{\beta, n}^\varepsilon(1) = 1 - \varepsilon$, and for all integer $k \ge 2$:
\begin{align}
\label{eq:ge:def}
    G_{\beta, n}^\varepsilon(k) =  G_{\beta, n}\left( G_{\beta, n}^\varepsilon(k-1) + \varepsilon\right).
\end{align}

The function $G_{\beta, n}^\varepsilon(k)$ is monotonically decreasing with $k$ and for any $\rho > 0$, it enters the interval $[1/q, 1/q + \rho]$ after constant number of recursive steps. The latter is established in the following lemma which is proved in Section~\ref{subsec:aux}.

\begin{lem}\label{lem:BurnIn:RhoGEpsilonBound}
For any fixed $\rho > 0$, there exist constants $k \in \mathbb{Z}_+$ and $ \varepsilon > 0$ such that $G_{\beta, n}^\varepsilon(k) + \varepsilon < \frac{1}{q} + \rho$. 
\end{lem}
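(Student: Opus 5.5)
Throughout we use one structural fact about the drift. For $\beta<\beta_s$, the definition of $\beta_s$ says $G_\beta(x)-x\neq 0$ on $(1/q,1)$. At $x=1$ we have $G_\beta(1)=e^\beta/(e^\beta+(q-1))<1$. By continuity, and since the sign cannot change on $(1/q,1)$, we get $G_\beta(x)<x$ for all $x\in(1/q,1]$. We also use that $G_\beta(1/q)=1/q$ and that $G_\beta$ is continuous and increasing.

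\textbf{Step 1: a uniform gap.} Fix $\rho>0$, which we may take smaller than $1-1/q$. Since $x\mapsto x-G_\beta(x)$ is continuous and positive on the compact interval $[1/q+\rho/2,\,1]$, it has a minimum $\delta>0$ there. Hence $G_\beta(x)\le x-\delta$ for every $x\in[1/q+\rho/2,1]$.

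\textbf{Step 2: choice of $\varepsilon$.} Choose $\varepsilon>0$ with $\varepsilon<\rho/4$ and $2\varepsilon<\delta/2$. Let $L$ be a Lipschitz constant of $G_\beta$ on $[0,2]$, and require additionally $L\varepsilon<\delta/4$. For $n$ large we also have $\beta/n<\delta/8$. This is where the dependence of $G_{\beta,n}$ on $n$ is absorbed; since $G_{\beta,n}^\varepsilon(k)$ depends on $n$, the lemma is understood for all sufficiently large $n$.

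\textbf{Step 3: the recursion has a fixed decrement while it stays high.} Write $g_k=G^\varepsilon_{\beta,n}(k)$. Suppose $g_{k-1}\in[1/q+\rho/2,\,1-\varepsilon]$, so that $g_{k-1}+\varepsilon\le 1$. Using the Lipschitz bound and then Step 1 at the point $g_{k-1}$,
\[
g_k=G_\beta(g_{k-1}+\varepsilon)+\beta/n\le G_\beta(g_{k-1})+L\varepsilon+\beta/n\le g_{k-1}-\delta+\tfrac{\delta}{4}+\tfrac{\delta}{8},
\]
so $g_k\le g_{k-1}-\delta/2$. Each such step also keeps $g_k\le 1-\varepsilon$, so the hypothesis propagates. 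Since $g_1=1-\varepsilon$, after at most $k_0=\lceil 2/\delta\rceil+1$ steps some iterate satisfies $g_k<1/q+\rho/2$.

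\textbf{Step 4: conclusion.} Take the first $k$ with $g_k<1/q+\rho/2$. Then $g_k+\varepsilon<1/q+\rho/2+\rho/4<1/q+\rho$. This $k\le k_0$ depends only on $\beta,q,\rho$, which is the claim of the lemma.

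\textbf{Main obstacle.} The only delicate point is that $\varepsilon$ must be fixed before iterating. If the gap $\delta$ were allowed to degenerate near $1/q$ (where $G_\beta'(1/q)=\beta/q<1$), the step size would shrink. Step 1 avoids this by computing $\delta$ only on the interval above $1/q+\rho/2$, and stopping the recursion there.
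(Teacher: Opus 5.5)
Your proof is correct and follows essentially the same route as the paper: you take a uniform gap $x - G_\beta(x) \ge \delta$ on a compact range bounded away from $1/q$, and absorb $\varepsilon$ and $\beta/n$ into a fraction of that gap so the iterates drop by a fixed amount from $G^\varepsilon_{\beta,n}(1)=1-\varepsilon$. The paper evaluates the gap directly at the shifted point $G^\varepsilon_{\beta,n}(k-1)+\varepsilon$ instead, which avoids your Lipschitz constant $L$. One small tightening: your first-exit index may vary with $n$, but $G^\varepsilon_{\beta,n}(k)$ is nonincreasing in $k$ (since $G_{\beta,n}$ is increasing and $G^\varepsilon_{\beta,n}(2)=G_\beta(1)+\beta/n<1-\varepsilon$), so the fixed choice $k=k_0$ works for all large $n$.
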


We use the recursive function $G_{\beta, n}^\varepsilon$ to probabilistically bound the process $Y_{kn}^{(\ell)}$.

\begin{lem}\label{lem:BurnIn:MaxYBound}
For any integer $k > 0$ and sufficiently small $\varepsilon  > 0$, we have 
   \begin{align*}
       \Pr\left[\underset{n^{3/4}  < i \leq n}{\max}  Y_{kn}^{(i)} > G_{\beta, n}^\varepsilon(k) + \varepsilon \right] \leq 2ke^{-\frac{\varepsilon^2\sqrt{n}}{128}}. 
   \end{align*}   
\end{lem}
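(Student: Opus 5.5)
The plan is induction on $k$, combining Lemma~\ref{lem:burnIn:XtDtBoundedImpliesYtBounded} (drift) with Lemma~\ref{lem:BurnIn:MartingaleAzumaBound} (fluctuations). For each scan $m \ge 0$ define the bad event
\[
B_m = \Big\{ \max_{1 \le i \le j \le n} D^{(i)}_{mn+j} \ge \varepsilon n^{3/4} \Big\}.
\]
We will show that on $B_0^c \cap \dots \cap B_{k-1}^c$,
\[
\max_{n^{3/4} < i \le n} Y^{(i)}_{kn} \le G^\varepsilon_{\beta,n}(k) + \varepsilon .
\]
A union bound then gives the claim, provided each $\Pr[B_m] \le 2e^{-\varepsilon^2\sqrt n/128}$.

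\textbf{Fluctuation bound.} Since $D^{(i)}_{mn+j} = M_{mn+j} - M_{mn+j-i}$, the event $B_m$ is contained in
\[
\Big\{ \max_{mn \le s < t \le (m+1)n} M_t - M_s \ge \varepsilon n^{3/4} \Big\}.
\]
By Lemma~\ref{lem:burnIn:MtIsaMartingale}, $M_t$ is a supermartingale. Its increments are bounded by $R = 1 + \beta/n$, which we may take to be at most $2$ (and in fact close to $1$). Lemma~\ref{lem:BurnIn:MartingaleAzumaBound} with $t_2 - t_1 = n$ and $d = \varepsilon n^{3/4}$ gives
\[
\Pr[B_m] \le 2\exp\!\big(-\varepsilon^2 n^{3/2}/(32 n R^2)\big).
\]
With $R \le 2$ this is $2e^{-\varepsilon^2\sqrt n/128}$. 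One boundary case needs care. For $m = 0$ the window reaches back into $t \le 0$, but the $j$ range keeps $mn+j-i \ge mn$, so $M_t$ is only needed for $t \ge 0$ with $M_0 = 0$. If necessary, we shift the time index so that Lemma~\ref{lem:BurnIn:MartingaleAzumaBound} applies from $t_1 \ge 1$.

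\textbf{Drift induction.} The base case $k = 1$ applies Lemma~\ref{lem:burnIn:XtDtBoundedImpliesYtBounded} with $a = 1$. This is trivially valid because $Y^{(i)}_0 \le 1$, and on $B_0^c$ it gives
\[
\max_i Y^{(i)}_n \le 2\varepsilon + G_{\beta,n}(1).
\]
We need this to be at most $G^\varepsilon_{\beta,n}(1) + \varepsilon = 1$, which holds since $G_{\beta,n}(1) < 1 - 2\varepsilon$ for small $\varepsilon$. (One can also treat $k = 1$ directly: the bound $1$ holds deterministically, so the base case is trivial.)

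For the inductive step, suppose that on the good events the bound holds at scan $k$. Apply Lemma~\ref{lem:burnIn:XtDtBoundedImpliesYtBounded} with $a = G^\varepsilon_{\beta,n}(k) + \varepsilon$ and with $\varepsilon$ replaced by $\varepsilon/2$. On $B_k^c$ this gives
\[
\max_i Y^{(i)}_{(k+1)n} \le \varepsilon + G_{\beta,n}(a) = G^\varepsilon_{\beta,n}(k+1) + \varepsilon
\]
by \eqref{eq:ge:def}. Halving $\varepsilon$ in the lemma means the fluctuation threshold must be $\varepsilon n^{3/4}/2$. I would therefore set up $B_m$ with threshold $\tfrac{\varepsilon}{2}n^{3/4}$ and rebalance the constant in Azuma: with $d = \varepsilon n^{3/4}/2$ and $R$ close to $1$ (so $R^2 \le 1$ asymptotically after adjusting), the bound $2e^{-\varepsilon^2\sqrt n/128}$ still comes out.

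\textbf{Main obstacle.} The delicate point is the hypothesis $G_{\beta,n}(a) + 2\cdot(\varepsilon/2) < a$, i.e.
\[
G_{\beta,n}(a) + \varepsilon < a \quad \text{for } a = G^\varepsilon_{\beta,n}(k) + \varepsilon \text{ along the whole recursion.}
\]
This needs $a > 1/q$ with a margin. Since $\beta < \beta_s$, the function $G_\beta(x) - x$ is strictly negative on $(1/q, 1]$, so for $a$ bounded away from $1/q$ it is at most some $-\delta(a)$. Once the recursion has entered $[1/q, 1/q + \rho]$, the statement only needs the trivial monotone bound: if $G_{\beta,n}(a) + \varepsilon \ge a$, the conclusion still follows from monotonicity of $G^\varepsilon_{\beta,n}$ in $k$ together with the persistence part of Lemma~\ref{lem:burnIn:XtDtBoundedImpliesYtBounded}. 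I expect to handle this by choosing $\varepsilon$ small relative to the contraction gap at the target level, exactly as in Lemma~\ref{lem:BurnIn:RhoGEpsilonBound}, and by restricting attention to the $k$ before the recursion stabilizes.
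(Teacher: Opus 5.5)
Your argument is the paper's proof: Lemma~\ref{lem:burnIn:XtDtBoundedImpliesYtBounded} with $a=G^\varepsilon_{\beta,n}(k)+\varepsilon$ and $\varepsilon/2$ in place of its $\varepsilon$, plus Lemma~\ref{lem:BurnIn:MartingaleAzumaBound} on each scan with $d=\varepsilon n^{3/4}/2$ and $t_2-t_1=n$. The only structural difference is bookkeeping. You union-bound the unconditional events $B_m$ for $1\le m\le k-1$; since the $k=1$ bound is deterministic, the $m=0$ boundary case never arises. The paper instead bounds $\Pr[\overline{A}_\ell\mid A_{\ell-1}]$ for the good events at successive scans, taking a worst case over $\sigma_{(\ell-1)n}$ via the Markov property. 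Your version is marginally cleaner, because $M_t$ is already a supermartingale for the unconditioned process.

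Two points in your write-up need fixing.

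\textbf{Increment bound.} The constant $128$ requires increments bounded by $R=1$, not $1+\beta/n$. This holds because $G_\beta$ is increasing on $[0,1]$ with $G_\beta(0)>0$ and $G_\beta(1)=e^\beta/(e^\beta+q-1)<1$. Hence $G_{\beta,n}(Y_{t-1})\in(0,1)$ for large $n$, and $|X_t-G_{\beta,n}(Y_{t-1})|<1$.

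\textbf{The ``main obstacle''.} This paragraph misdiagnoses the issue, and the patch it proposes would not work. Write $a_k=G^\varepsilon_{\beta,n}(k)+\varepsilon$. By \eqref{eq:ge:def}, $G_{\beta,n}(a_k)+\varepsilon=a_{k+1}$, so the hypothesis of Lemma~\ref{lem:burnIn:XtDtBoundedImpliesYtBounded} (with $\varepsilon/2$) is literally $a_{k+1}<a_k$.
\begin{itemize}
\item It holds for $k=1$ once $G_\beta(1)+\beta/n+\varepsilon<1$; this is where ``$\varepsilon$ sufficiently small'' enters.
\item It then holds for every $k$ by induction, because $a\mapsto G_{\beta,n}(a)+\varepsilon$ is strictly increasing.
\item Similarly $a_k>1/q$ for all $k$, since $G_\beta(1/q)=1/q$.
\end{itemize}
No use of $\beta<\beta_s$ or of Lemma~\ref{lem:BurnIn:RhoGEpsilonBound} is needed; those only govern where the sequence ends up. No quantitative margin is needed either. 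The proof of Lemma~\ref{lem:burnIn:XtDtBoundedImpliesYtBounded} only asks that the $O(n^{-1/4})$ Lipschitz error in $G_{\beta,n}(a+n^{-1/4})$ be at most $\varepsilon/2$, uniformly in $a$, so the bound holds for every $k$ once $n$ is large.

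By contrast, your fallback for the case $G_{\beta,n}(a)+\varepsilon\ge a$ cannot work. First, monotone decrease of $G^\varepsilon_{\beta,n}$ makes the target at step $k+1$ stronger, not weaker. Second, the persistence part of Lemma~\ref{lem:burnIn:XtDtBoundedImpliesYtBounded} needs the same hypothesis. Restricting to $k$ before the recursion stabilizes would also not prove the statement for every $k$. The paper applies Lemma~\ref{lem:burnIn:XtDtBoundedImpliesYtBounded} without comment, relying on its remark that $G^\varepsilon_{\beta,n}(k)$ is decreasing in $k$, which is exactly the observation above.
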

\begin{proof}
Consider the events 
\begin{align*}
    A_k &= \left\{ \underset{n^{3/4} < i \leq n}{\max}  Y_{kn}^{(i)} \leq G_{\beta, n}^\varepsilon(k) + \varepsilon \right\},~\text{and}\\
    B_k &= \left\{ \underset{1 \leq i\leq j\leq n}{\max} D^{(i)}_{(k-1) n + j} \leq \frac{\varepsilon n^{3/4}}{2} \right\}.
\end{align*}
For $k > 1$ we have
\begin{align}
\label{eq:chain}
    \Pr[\overline{A}_{k}] \leq \Pr\big[\overline{A}_{k} \mid A_{k-1} \big] + \Pr[\overline{A}_{k-1}] \le \Pr[\overline{A}_1] + \sum_{\ell= 2}^k \Pr\big[\overline{A}_{\ell} \mid A_{\ell-1} \big].
\end{align}
Since
 \begin{align*}
     \Pr[\overline{A}_1]  = \Pr\left[\underset{n^{3/4} < i \leq n}{\max}  Y_n^{(i)} > G_{\beta, n}^\varepsilon(1) + \varepsilon \right] =  \Pr\left[\underset{n^{3/4} < i \leq n}{\max}  Y_n^{(i)}  > 1 \right] = 0,
\end{align*}
it suffices to prove that  
$$\Pr[\overline{A_{\ell}} \mid A_{\ell-1}] \leq 2 e^{-\frac{\varepsilon^2\sqrt{n}}{128}},$$ 
for all $\ell \in \{2, \dots, k\}$. 
For any such $\ell$, 
using the definition of $G_{\beta,n}^\varepsilon$ in~\eqref{eq:ge:def} and taking $a = G_{\beta,n}^\varepsilon(\ell-1) + \varepsilon$,
we obtain 
from Lemma \ref{lem:burnIn:XtDtBoundedImpliesYtBounded} that
$A_{\ell-1} \cap B_{\ell} \subseteq A_{\ell}$ and thus $\overline{A}_{\ell} \subseteq \overline{A}_{\ell-1} \cup \overline{B}_{\ell}$. Then,
\begin{align}
    \Pr \left[\overline{A}_{\ell} \mid A_{\ell-1} \right] &\leq \Pr \left[\overline{A}_{\ell-1} \cup \overline{B}_{\ell} \mid A_{\ell-1} \right] 
        \le \Pr\left[\overline{B}_{\ell} \mid A_{\ell-1} \right] \notag \\
        &= \Pr\left[\underset{1 \leq i \leq j \leq n}{\max} M_{(\ell-1) n + j} -  M_{(\ell-1) n + j-i} > \frac{\varepsilon  n^{3/4} }{2} \,\Big\vert \,A_{\ell-1} \right]. \label{eqn:BurnIn:MaxYBound:NotAlGivenAl}
\end{align}
We consider the following subset of configurations,
\begin{align*}
    \Sigma_{n,1 }^{a} = \Big\{ \sigma \in \Sigma_n: \frac{1}{i} \sum_{j = n - i + 1}^n \mathbf{1}\big(\sigma(j) = 1\big) \leq a \quad \forall i \in \mathbb{N} \cap (n^{3/4}, n]\Big\}.
\end{align*}
It is easy to verify that $A_{\ell-1} = \{\sigma_{(\ell-1)n} \in \Sigma_{n, 1}^{a}\}$. We rewrite \eqref{eqn:BurnIn:MaxYBound:NotAlGivenAl} as
\begin{align*}
    &\Pr \left[\overline{A}_{\ell} \mid A_{\ell-1} \right] \\
        &\leq \Pr\left[\underset{1 \leq i \leq j \leq n}{\max} M_{(\ell-1) n + j} -  M_{(\ell-1) n + j-i} > \frac{\varepsilon  n^{3/4} }{2} \,\Big\vert \,A_{\ell-1} \right] \\
        &= \frac{1}{\Pr[A_{\ell - 1}]} \sum_{\sigma \in \Sigma_{n ,1}^{a}}\Pr\left[\underset{1 \leq i \leq j \leq n}{\max} M_{(\ell-1) n + j} -  M_{(\ell-1) n + j-i} > \frac{\varepsilon  n^{3/4} }{2} \,\Big\vert \, \sigma_{(\ell-1)n} = \sigma \right] 
        \Pr[\sigma_{(\ell-1)n} = \sigma] \\
        &\leq \underset{\sigma \in \Sigma^a_{n,1}}{\max} \Pr\left[\underset{1 \leq i \leq j \leq n}{\max} M_{(\ell-1) n + j} -  M_{(\ell-1) n + j-i} > \frac{\varepsilon  n^{3/4} }{2} \,\Big\vert \, \sigma_{(\ell-1)n} = \sigma \right] .
\end{align*}
By the Markov property and Lemma~\ref{lem:burnIn:MtIsaMartingale}, $\{M_t\}_{t \geq (\ell-1)n}$  conditioned on $\sigma_{(\ell-1)n}$ is a supermartingale with $\lvert M_{t+1} - M_t \rvert \leq 1$. 
Lemma~\ref{lem:BurnIn:MartingaleAzumaBound} implies that 
\begin{align*}
    \Pr\big[\overline{A}_{\ell} \mid A_{\ell-1}\big]  \leq 2e^{-\frac{(\varepsilon n^{3/4})^2}{128n}} = 2e^{-\frac{\varepsilon^2\sqrt{n}}{128}},
\end{align*}
which yields the result.
\end{proof}

We are now ready to prove Lemma~\ref{lem:BurnIn:RhoBoundedConfigAndSpinFrac}.

\begin{proof}[Proof of Lemma~\ref{lem:BurnIn:RhoBoundedConfigAndSpinFrac}]
By Lemma \ref{lem:BurnIn:RhoGEpsilonBound} and \ref{lem:BurnIn:MaxYBound}, for any $\hat \rho > 0$, there exist an integer $k$ and $\varepsilon > 0$ such that 
\begin{align*}
    \Pr\Big[\underset{n^{3/4} < i \leq n}{\max} Y_{kn}^{(i)} > \frac{1}{q} +  \hat\rho\Big] \leq 2 ke^{-\frac{\varepsilon^2\sqrt{n}}{128}}.
\end{align*}
Recall that $Y_{kn}^{(i)}$ denotes the fraction of vertices assigned the spin $1$, but this bound holds by symmetry for all spins in $Q$. So, setting
\begin{align*}
        \Sigma_n^{\rho+} &= \Big\{ \sigma \in \Sigma_n :  \frac{1}{i} \sum_{j = n-i+1}^n \mathbf{1}\big({\sigma(j) = \kappa}\big)  \leq \frac{1}{q} + \rho \quad \forall i \in \mathbb{N} \cap(n^{3/4},n]~\text{and}~  
    \forall \kappa  \in Q\Big\},
\end{align*}
a union bound over the spins implies
\begin{align*}
    \Pr\big[\sigma_{kn} \notin \Sigma_n^{\hat \rho q}\big] \leq\Pr\big[\sigma_{kn} \notin \Sigma_n^{\hat\rho+}\big] \leq  q\Pr\Big[\underset{n^{3/4} < i \leq n}{\max} Y_{kn}^{(i)} > \frac{1}{q} +  \hat\rho\Big] \le  2q ke^{-\frac{\varepsilon^2\sqrt{n}}{128}};
\end{align*}
taking $\rho = \hat\rho q$ establishes~\eqref{eqn:BurnIn:RhoBoundedConfigAndSpinFrac:ConstTime}.

To prove~\eqref{eqn:BurnIn:RhoBoundedConfigAndSpinFrac:endFrac},
observe that the initial configuration $\sigma_0 \in \Sigma_n^{\rho} \subseteq \Sigma_{n}^{\rho+}$. 
Then, as in~\eqref{eq:chain}, for any integer $k > 0$
\begin{align*}
    \Pr\big[\sigma_{kn} \notin \Sigma_n^{\rho+} \big] 
    \leq \sum_{\ell = 0}^{k-1} \Pr\big[\sigma_{(\ell+1)n} \notin \Sigma_n^{\rho+} \mid \sigma_{\ell n} \in \Sigma_n^{\rho+}\big] .
\end{align*}
Given that $\sigma_{\ell n} \in \Sigma_n^{\rho+}$ , the configuration at the next step can leave the set $\Sigma_n^{\rho+}$ only if the fluctuation is large. In particular, we obtain from Lemma~\ref{lem:burnIn:XtDtBoundedImpliesYtBounded} and a union bound 
over the spins that
\begin{align}
    \Pr\big[\sigma_{kn} \notin \Sigma_n^{\rho+} \big] 
    &\le q\sum_{\ell = 0}^{k-1}\Pr\Big[\underset{1 \leq j^\prime < j \leq n}{\max} M_{ \ell n+j} -  M_{\ell n+j-j^\prime} > \frac{\varepsilon n^{3/4}}{2}  \,\Big\vert\, \sigma_{\ell n} \in \Sigma_n^{\rho+}  \Big]\notag. 
\end{align}
Recall that 
$\{M_t\}_{t \geq \ell n}$ is a supermartingale (see Lemma \ref{lem:burnIn:MtIsaMartingale}), and thus the Maximal-Azuma inequality
in Lemma~\ref{lem:BurnIn:MartingaleAzumaBound} yields that
$$
 \Pr\big[\sigma_{kn} \notin \Sigma_n^{\rho+} \big] \leq 2qke^{-\frac{(\varepsilon n^{3/4})^2}{128n}} = 2qke^{-\frac{\varepsilon^2\sqrt{n}}{128}}.
$$
Using this bound, we obtain for any integer $\hat k > 0$
\begin{align}
    &\Pr \left[ \underset{ 1\leq i \leq n}{\max} \big\lVert \alpha(\sigma_{(\hat k-1)n+i}) - \hat{e} \big\rVert_\infty > \rho q \right] \notag\\
    &\leq \Pr\left[ \underset{ 1\leq i \leq n}{\max} \,\big\lVert\, \alpha(\sigma_{(\hat k-1)n+i}) - \hat{e} \big\rVert_\infty > \rho q \,\,\Big\vert\,\, \sigma_{(\hat k-1)n} \in \Sigma_n^{\rho+} \right] 
    + \Pr\left[\sigma_{(\hat k-1)n} \notin \Sigma_n^{\rho+} \right] \notag\\
    &\leq \Pr\left[ 
    \underset{j \in Q}{\bigcup} \Big\{ \underset{1\leq i \leq n}{\max}  \alpha_j\big(\sigma _{(\hat k-1)n + i}\big) - 1/q > \rho + \frac{1}{n^{1/4}}  \Big\}
    \,\,\Big\vert\,\, \sigma_{(\hat k-1)n} \in \Sigma_n^{\rho+} \right] 
    + 2q(\hat k-1)e^{-\frac{\varepsilon^2\sqrt{n}}{128}} \notag\\
    &\leq q\Pr\left[ 
      \underset{1\leq i \leq n}{\max}  \alpha_1\big(\sigma _{(\hat k-1)n + i}\big) - 1/q > \rho  +\frac{1}{n^{1/4}}
    \,\,\Big\vert\,\, \sigma_{(\hat k-1)n} \in \Sigma_n^{\rho+} \right] 
    + 2q(\hat k-1)e^{-\frac{\varepsilon^2\sqrt{n}}{128}} \notag\\
    &\leq q \underset{\sigma \in \Sigma_n^{\rho+}}{\max}  \Pr\left[ 
      \underset{1\leq i \leq n}{\max}  \alpha_1\big(\sigma _{(\hat k-1)n + i}\big) - 1/q > \rho  + \frac{1}{n^{1/4}}
    \,\,\Big\vert\,\, \sigma_{(\hat k-1)n} = \sigma \right] + 2q(\hat k-1)e^{-\frac{\varepsilon^2\sqrt{n}}{128}}. \label{BurnIn:RhoBoundedConfigAndSpinFrac:beforeMartingale}
\end{align}
We consider the following events,
\begin{align*}
    E_{\hat k} &= \left\{ \underset{n^{3/4} < i \leq n}{\max}  Y_{(\hat k - 1)n}^{(i)} \leq \frac{1}{q} + \rho \right\},\\
    B_{\hat k} &= \left\{ \underset{1 \leq i\leq j\leq n}{\max} D^{(i)}_{(\hat k-1) n + j} < \frac{\varepsilon n^{3/4}}{2} \right\},~\text{and} \\
    C_{\hat k} &= \left\{ \underset{1 \leq i \leq n}{\max}  Y_{(\hat k-1)n + i} \leq \frac{1}{q} + \rho + \frac{1}{n^{1/4}}\right\}.
\end{align*}
Lemma~\ref{lem:burnIn:XtDtBoundedImpliesYtBounded} implies that $\overline{C}_{\hat k} \subseteq \overline{E}_{\hat k} \cup \overline{B}_{\hat k}$. We recall that $D_t^{\ell} = M_t - M_{t - \ell}$ for any positive integer $t$ and integer $\ell$ such that $1 \leq \ell \leq \min\{t, n\}$. 
Using the fact $\big\{\sigma_{(\hat k-1)n} \in \Sigma_n^{\rho+} \big\} \subseteq \overline{E}_{\hat k}$, we obtain for any $\sigma \in  \Sigma_n^{\rho+}$
\begin{align*}
    \Pr\Big[ 
      \underset{1\leq i \leq n}{\max}  \alpha_1\big(\sigma _{(\hat k-1)n + i}\big) - 1/q > \rho  &+ \frac{1}{n^{1/4}}
    \,\,\Big\vert\,\, \sigma_{(\hat k-1)n} = \sigma \Big] 
    = \Pr\Big[\overline{C}_{\hat k}  \, \Big\vert \, \sigma_{(\hat k - 1)n} = \sigma\Big]\\
    &\leq \Pr\Big[\overline{E}_{\hat k}  \, \Big\vert \, \sigma_{(\hat k - 1)n} = \sigma\Big]
    + \Pr\Big[\overline{B}_{\hat k}  \, \Big\vert \, \sigma_{(\hat k - 1)n} = \sigma\Big] \\
    &\leq \Pr\Big[\underset{1 \leq i < j \leq n}{\max} M_{ (\hat k-1)n+j} -  M_{(\hat k-1)n+j-i} > \frac{\varepsilon n^{3/4}}{2}  \Big\vert \sigma_{(\hat k-1)n} = \sigma  \Big] \\
    &\leq 2 e^{-\frac{\varepsilon^2 \sqrt n}{128}},
\end{align*}
where we apply Lemmas~\ref{lem:burnIn:MtIsaMartingale} and \ref{lem:BurnIn:MartingaleAzumaBound} in the last inequality. Plugging this bound into \eqref{BurnIn:RhoBoundedConfigAndSpinFrac:beforeMartingale} yields the result.
\end{proof}

\subsection{Proofs of auxiliary Facts}
\label{subsec:aux}

In this section, we complete the proof of Lemma~\ref{lem:BurnIn:RhoBoundedConfigAndSpinFrac}
by providing the proofs of Lemmas~\ref{lem:burnIn:MtIsaMartingale} and~\ref{lem:BurnIn:RhoGEpsilonBound}. Let us introduce some notation first.
Let 
\begin{align*}
     \mathcal{S}_{q} &= \Big\{ x \in \mathbb{R}_{\geq 0}^q : \sum_{i= 1}^q x_i = 1 \Big\},\\
    \tilde{\mathcal{S}}_{q} &= \Big\{ x \in \mathbb{R}_{\geq 0}^q : \sum_{i= 1}^q x_i = 1-\frac 1n \Big\},~\text{and}\\
     \mathcal{S}_{q}^\prime &= \Big\{ x \in \mathbb{R}_{+}^q : 
    \sum_{j= 1}^q x_j = 1
    \text{ and } 
     x_i \geq \frac{1}{e^\beta + (q-1)}~~\forall i \in Q\Big\}.
\end{align*}
We define the function $g_\beta : \mathcal{S}_{q} \cup \tilde{\mathcal{S}}_{q} \to \mathcal{S}^\prime_{q}$, as $g_\beta(s) = \big( g^1_\beta(s), \cdots, g^q_\beta(s) \big)$ where
\begin{align*}
g_\beta^j(s) = \frac{e^{\beta s_j}}{ \sum_{i = 1}^q e^{\beta s_i}}.
\end{align*}
For $i,j \in Q$, let $g_\beta^{i \to j}: \mathcal{S}_{q}  \to \mathcal{S}^\prime_q$ be given by
\begin{align*}
    g_\beta^{i \to j}\big(s\big) &= g_\beta^j\Big(s - \frac{1}{n}e_i\Big),
\end{align*}
where recall that $e_i \in \{0,1\}^q$ is the indicator vector with $1$ in the $i$-{th} coordinate and $0$ elsewhere.
This function corresponds to the probability that a site update flips the vertex from spin $i$ to $j$ from configuration with proportion vector $s$.
We establish first the following inequality involving $g_\beta^{i \to j}$ and $G_{\beta, n}$.

\begin{lem}\label{lem:BurnIn:XFractionDominatesGibbsProbability}
For $s \in \mathcal{S}_{q}$ and $i,j \in Q$, we have
\begin{align*}
    g_\beta^{i \to j}(s) \leq   g_\beta^j(s) + \frac{\beta}{n} \leq G_{\beta, n}(s_j).
\end{align*}
\end{lem}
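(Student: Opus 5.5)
Two inequalities need to be shown: the first is a Lipschitz-type bound on removing $1/n$ from one coordinate, and the second is a maximization of the softmax coordinate over the simplex with $s_j$ held fixed.

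\emph{First inequality.} If $i=j$, moving mass $1/n$ away from coordinate $j$ lowers $e^{\beta s_j}$ and leaves the other terms alone. The map $x\mapsto e^{\beta x}/(e^{\beta x}+c)$ is increasing, so $g_\beta^{j\to j}(s)\le g_\beta^j(s)$.

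If $i\neq j$, the numerator $e^{\beta s_j}$ is unchanged and the denominator changes from $\sum_k e^{\beta s_k}$ to $\sum_k e^{\beta s_k}-e^{\beta s_i}(1-e^{-\beta/n})$. Write $g_\beta^{i\to j}(s)=g_\beta^j(s)/(1-\delta)$ with
$$\delta=g_\beta^i(s)(1-e^{-\beta/n})\le \frac{\beta}{n}\,g_\beta^i(s).$$
Then
$$g_\beta^{i\to j}(s)-g_\beta^j(s)=g_\beta^j(s)\frac{\delta}{1-\delta}\le \frac{\beta}{n}\cdot\frac{g^i_\beta g^j_\beta}{1-\delta}.$$
Since $g^i_\beta+g^j_\beta\le 1$, we have $g^ig^j\le 1/4$, so this is at most $\beta/n$ for $n$ large. (A cleaner route is the mean value theorem in the variable $s_i$: the partial derivative of $g^j_\beta$ with respect to $s_i$ is $-\beta g^ig^j$, whose absolute value is at most $\beta/4$.)

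\emph{Second inequality.} It is enough to show $g_\beta^j(s)\le G_\beta(s_j)$ for every $s\in\mathcal S_q$, because $G_{\beta,n}=G_\beta+\beta/n$. Fix $s_j=x$. The other coordinates are nonnegative and sum to $1-x$. Because $e^{\beta y}$ is convex, Jensen's inequality gives
$$\sum_{k\ne j}e^{\beta s_k}\ge (q-1)e^{\beta\frac{1-x}{q-1}}.$$
Putting this into the denominator yields $g^j_\beta(s)\le G_\beta(x)$. The case $q=1$ is trivial and can be ignored since $q\ge2$.

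No step here is a real obstacle. The only points needing care are the small-$n$ condition in the first inequality, where $1-\delta\ge 1/4$ certainly holds for $n\ge \beta$, and stating the convexity step with the correct direction: a larger denominator gives a smaller ratio.
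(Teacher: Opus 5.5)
Your proof is correct and follows the paper's structure. The first inequality is the derivative bound the paper obtains from the mean value theorem on $[s-\frac1n e_i,\,s]$; your parenthetical route is exactly this, with the sharper $\beta/4$ in place of the paper's cruder $\beta$. The second inequality is the paper's observation that $G_\beta(s_j)$ is the maximum of $g^j_\beta$ over $\{x\in\mathcal S_q : x_j=s_j\}$, and your Jensen step justifies this explicitly where the paper only asserts it via ``direct calculation.''

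One small remark: your explicit-algebra route needs $n\ge\beta$, but the statement holds for all $n$. The mean value argument gives this directly, or you can note $g^j_\beta\le 1-g^i_\beta\le 1-\delta$, which yields $g^j_\beta\,\delta/(1-\delta)\le\delta\le\beta/n$.
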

\begin{proof}
We apply the mean-value theorem to the interval $\big[s-\frac{1}{n}e_i, s\big]$ and obtain the following for some $\theta \in (0, 1)$,
\begin{align}
    g_\beta^j\Big(s - \frac{1}{n} e_i\Big) - g_\beta^j(s) &= \left(- \frac{1}{n} e_i \right) \cdot \nabla g^j\Big(s - \theta \frac{1}{n} e_i\Big). \label{eqn:burnIn:XFractionDominatesGibbsProbability:meanValueFormula}
\end{align}
We find the partial derivative of $g_\beta^j(x)$ with respect to a single coordinate $x_{\ell}$ for some $\ell \in Q$. First, we consider the case where $\ell = j$,
\begin{align*}
    \frac{\partial}{\partial x_{\ell}}g_\beta^{j}\left(x\right) = \frac{\partial}{\partial x_{\ell}} \left(\frac{e^{\beta x_{j}}}{\sum_{k = 1}^{q}e^{\beta x_{k}}} \right)
     &= \frac{\left( \sum_{k = 1}^{q}e^{\beta x_{k}} \right) \beta e^{\beta x_{j}} -   e^{\beta x_{j}}  \beta e^{\beta x_{\ell}} }{\left( \sum_{k = 1}^{q}e^{\beta x_{k}} \right)^2 }\\
     &= \frac{\beta e^{\beta x_{j}}}{\sum_{k = 1}^{q}e^{\beta x_{k}} } -  \frac{\beta e^{ \beta x_{j}} e^{ \beta x_{\ell}} }{\left(\sum_{k = 1}^{q}e^{\beta x_{k}} \right)^2}
     = \beta g_\beta^j \left(x \right) \left(1 -  g_\beta^\ell \left(x \right)\right).
\end{align*}
We consider the case where $\ell \neq j$,
\begin{align*}
\frac{\partial}{\partial x_{\ell}}g_\beta^{j}\left(x\right) &= \frac{\partial}{\partial x_{\ell}} \left(\frac{e^{\beta x_{j}}}{\sum_{k = 1}^{q}e^{\beta x_{k}}} \right)
    = -e^{\beta x_{j}} \frac{1}{\left(\sum_{k = 1}^{q}e^{\beta x_{k}} \right)^2} \beta e^{\beta x_{\ell}}
    = - \beta g_\beta^{\ell}\left(x \right) g_\beta^{j}\left(x \right).
\end{align*}
Therefore, we write the gradient of $g_\beta^j(x)$ as
\begin{align}
\Big(\nabla g_\beta^j(x)\Big)_\ell = \frac{\partial}{\partial x_\ell} g_\beta^j(x) \notag 
    &= \beta  g_\beta^j (x)\left( \mathbf{1}(\ell = j)  \left(1 - g_\beta^\ell(x)\right)  - \mathbf{1}(\ell \neq j) g_\beta^\ell(x)  \right) \notag\\  
        &= \beta  g_\beta^j (x)\left( \mathbf{1}(\ell = j)  - g_\beta^\ell(x)  \right)
    \geq -\beta . \label{eqn:gBetahGradientCalc} 
\end{align}
Via a direct calculation of the partial derivatives of the function $G_\beta$ one can check that 
$$G_\beta(s_j) = \underset{x \in \mathcal{S}_q : x_j = s_j}{\max} g_\beta^j(x),$$
and thus $g^j_\beta(s) \leq G_\beta(s_j)$. 
Using this fact, and plugging~\eqref{eqn:gBetahGradientCalc} into \eqref{eqn:burnIn:XFractionDominatesGibbsProbability:meanValueFormula} we obtain
\begin{align*}
    g_\beta^j\Big(s - \frac{1}{n} e_i\Big) &\leq   g_\beta^j(s) + \frac{\beta}{n} 
        \leq G_{\beta}(s_{j}) + \frac{\beta}{n} 
        = G_{\beta, n}(s_j),
\end{align*}
as claimed.
\end{proof}

We can now provide the proof of Lemma~\ref{lem:burnIn:MtIsaMartingale}.

\begin{proof}[Proof of Lemma~\ref{lem:burnIn:MtIsaMartingale}]
Let $t \geq 1$ be a integer and let $j$ be the spin of the vertex to be update updated at time $t+1$. Then, 
\begin{align*}
    \mathbb{E}[M_{t+1} - M_{t} | \mathcal{F}_t] = \mathbb{E}[X_{t+1} - G_{\beta, n}(Y_t) | \mathcal{F}_t]
        = \mathbb{E}[X_{t+1} | \mathcal{F}_t] - G_{\beta, n}(Y_t) 
        = g_\beta^{j \to1}\big(\alpha(\sigma_t)\big) -  G_{\beta, n}(Y_t) \leq 0,
\end{align*}
where the last inequality follows from Lemma \ref{lem:BurnIn:XFractionDominatesGibbsProbability}.
\end{proof}

\begin{proof}[Proof of Lemma~\ref{lem:BurnIn:RhoGEpsilonBound}]
 Proposition 3.1 from \cite{Cuff_2012} shows that $G_\beta(x) -x< 0$ for all $x \in (\frac{1}{q}, 1)$ whenever $\beta < \beta_s$. Therefore, there exists $\delta_{\min} > 0$ such that
\begin{align*}
    \underset{\frac{1}{q} + \rho < x < 1}{\max} G_\beta(x) - x \leq -\delta_{\min}.
\end{align*}
Thus, for all $x \in (\frac{1}{q} + \rho,1)$, we have 
$
    G_\beta(x) - x + \frac{\delta_{\min}}{2} \leq - \frac{\delta_{\min}}{2} $ and
for sufficiently large $n$ and a small enough $\varepsilon > 0$ we have
$
    \frac{\beta}{n}  +\varepsilon \leq \frac{\delta_{\min}}{2}$. Then,
 it follows that for $x \in (\frac{1}{q} + \rho,1)$
\begin{align*}
    G_{\beta, n}(x) + \varepsilon \le G_\beta(x) + \frac{\beta}{n} + \varepsilon &\leq x - \frac{\delta_{\min}}{2}.
 \end{align*}
For $t > 0$, as long as $G_{\beta, n}^\varepsilon(t) + \varepsilon > \frac{1}{q} + \rho$, we obtain from this inequality that
\begin{align*}
    G_{\beta, n}(G_{\beta, n}^\varepsilon(t) + \varepsilon) + \varepsilon &\leq G_{\beta, n}^\varepsilon(t) + \varepsilon - \frac{\delta_{\min}}{2},     
\end{align*}
which implies that 
$G_{\beta, n}^\varepsilon(t+1) \leq G_{\beta, n}^\varepsilon(t) - \frac{\delta_{\min}}{2}$. Since this holds as long as $G_{\beta, n}^\varepsilon(t) + \varepsilon > \frac{1}{q} + \rho$ and $G_{\beta, n}^\varepsilon(1) + \varepsilon = 1$, there exists an integer $k \leq \left\lceil \frac{2(1 - \rho)}{\delta_{\min}} \right\rceil$ such that $G_{\beta, n}^\varepsilon(k) + \varepsilon < \frac{1}{q} + \rho$.
\end{proof}

\section{Spin fraction analysis}\label{section:spinFraction}

In this section, we begin our analysis of Phase 2 of the coupling. In particular, we analyze rate of convergence of the systematic scan to a $O(1/\sqrt{n})$ neighborhood of $\hat e$ from a configuration $\sigma_0 \in \Sigma_n^{\rho}$.
We start with the following bounds for the function $g_\beta^\ell$ which follow from its Taylor expansion; their proofs are provided in Section~\ref{subsec:aux:drfit}.

\begin{lem}
\label{lemma:g:taylor}
For any $\ell \in Q$ and $s,\tilde s \in \mathcal S_q$,
\begin{enumerate}[(i)]
    \item \label{lemma:g:taylor:Taylorg_beta^lFinal} there exists an $R : \mathcal{S}_q \to \mathbb{R}$ such that $g_\beta^\ell(s) = \frac{1}{q} + \frac{\beta}{q} \Big(s_\ell - \frac{1}{q}\Big) + R(s)$  and $\lvert R(s) \rvert \leq 3q\beta^2 \lVert s-\hat e\rVert_2^2$ ;
    \item \label{lemma:g:taylor:gDiffSingleCoordFinal} $\big\lVert g_\beta(s) - g_\beta(\tilde{s}) \big\lVert_1
    = \frac{\beta}{q} \big\lVert s - \tilde{s} \big \rVert_1 +  \lVert s - \tilde{s} \rVert_1 \Big\{ O\big(\lVert s - \hat{e} \rVert_1 + \lVert \tilde{s} - \hat{e} \rVert_1\big) \Big\}$;
    \item \label{lemma:g:taylor:gDiffSquaredFinal} $\big\lVert g_\beta(s) - g_\beta(\tilde{s}) \big\lVert_2^2
    = \Big(\frac{\beta}{q} \Big)^2 \big\lVert s - \tilde{s} \big \rVert_2^2 \Big\{1 +   O\big(\lVert s - \hat{e} \rVert_1 + \lVert \tilde{s} - \hat{e} \rVert_1\big) \Big\}$.
\end{enumerate}
\end{lem}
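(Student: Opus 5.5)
All three estimates come from Taylor expanding the softmax map $g_\beta$ around $\hat e$, using the derivative formula \eqref{eqn:gBetahGradientCalc} from the proof of Lemma~\ref{lem:BurnIn:XFractionDominatesGibbsProbability}. That formula gives $\partial_m g^\ell_\beta(x)=\beta g^\ell_\beta(x)(\mathbf 1(m=\ell)-g^m_\beta(x))$. Differentiating once more gives
$$\partial_k\partial_m g^\ell_\beta=\beta^2 g^\ell_\beta\big[(\mathbf 1(k=\ell)-g^k_\beta)(\mathbf 1(m=\ell)-g^m_\beta)-g^m_\beta(\mathbf 1(k=m)-g^k_\beta)\big].$$
Since $0\le g^\cdot_\beta\le 1$, every second partial is bounded by $2\beta^2$ in absolute value, uniformly on the simplex and its $1/n$-perturbations.

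For (i), I expand $g^\ell_\beta$ to second order at $\hat e$ along the segment to $s$. At $\hat e$ every coordinate of $g_\beta$ equals $1/q$, so $\nabla g^\ell_\beta(\hat e)_m=\frac{\beta}{q}(\mathbf 1(m=\ell)-\frac1q)$. Write $h=s-\hat e$; since $s\in\mathcal S_q$, $\sum_m h_m=0$. The linear term is therefore $\frac\beta q h_\ell-\frac{\beta}{q^2}\sum_m h_m=\frac\beta q(s_\ell-\frac1q)$. The Lagrange remainder is $\frac12 h^\top\nabla^2 g^\ell_\beta(\xi)h$, whose absolute value is at most $\frac12\cdot 2\beta^2\|h\|_1^2\le \beta^2 q\|h\|_2^2$ by Cauchy--Schwarz. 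This is within the claimed constant $3q\beta^2$.

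For (ii) and (iii), I apply the mean value theorem in integral form: $g_\beta(s)-g_\beta(\tilde s)=\big(\int_0^1 Dg_\beta(\tilde s+\theta(s-\tilde s))\,d\theta\big)(s-\tilde s)$. The Jacobian is $Dg_\beta(x)=\beta(\mathrm{diag}(g_\beta)-g_\beta g_\beta^\top)$. At $\hat e$ this equals $\frac\beta q(I-\frac1q\mathbf 1\mathbf 1^\top)$, which acts as $\frac\beta q I$ on the zero-sum vector $v=s-\tilde s$. Along the segment, the Jacobian differs from its value at $\hat e$ by $O(\beta^2)\max_\theta\|x_\theta-\hat e\|_1=O(\|s-\hat e\|_1+\|\tilde s-\hat e\|_1)$ entrywise, using the second-derivative bound. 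Hence $g_\beta(s)-g_\beta(\tilde s)=\frac\beta q v+E v$ with $\|E\|=O(\|s-\hat e\|_1+\|\tilde s-\hat e\|_1)$ in any fixed operator norm; dimension constants depend only on $q$. Taking $\ell_1$ norms and using the triangle inequality gives (ii). For (iii), I expand $\|\frac\beta q v+Ev\|_2^2=(\frac\beta q)^2\|v\|_2^2+2\frac\beta q\langle v,Ev\rangle+\|Ev\|_2^2$. The last two terms are $O(\delta)\|v\|_2^2$, where $\delta=\|s-\hat e\|_1+\|\tilde s-\hat e\|_1$. Dividing out $(\beta/q)^2$, which is a positive constant, gives the multiplicative form $1+O(\delta)$.

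The main obstacle is minor. The zero-sum property is what kills the $-\frac1q\mathbf 1\mathbf 1^\top$ part of the Jacobian, and it requires both points to lie in $\mathcal S_q$, which the statement assumes. Beyond that, the only care needed is to keep the error constants uniform. If $s$ is far from $\hat e$, the $O(\cdot)$ terms are bounded anyway, since all quantities involved are bounded on the simplex, so the statements remain true with adjusted constants.
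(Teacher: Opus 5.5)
Your proof is correct and follows essentially the same route as the paper. You use a second-order Taylor expansion of $g_\beta^\ell$ at $\hat e$ with a uniform Hessian bound (you get $2\beta^2$ where the paper uses $3\beta^2$), and the zero-sum property of $s-\hat e$ and $s-\tilde s$ to collapse the linear term. You then linearize $g_\beta(s)-g_\beta(\tilde s)$ as $\frac{\beta}{q}(s-\tilde s)$ plus an $O(\|s-\hat e\|_1+\|\tilde s-\hat e\|_1)\,\|s-\tilde s\|$ correction. Your averaged-Jacobian formulation and vector-level expansion of the square in (iii) are a slightly cleaner packaging of the paper's two steps: a Taylor expansion at $\tilde s$ with a gradient perturbation, followed by coordinatewise squaring.
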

Now, for a vector $X = (X_1,\dots,X_q)\in \mathbb{R}^q$, let
\begin{align*}
    \mathrm{Var}(X) = \mathbb{E}\big[\lVert X - \mathbb{E}X \rVert_2^2 \big]
    = \sum_{i = 1}^q\mathbb{E}\big[\big( X_i - \mathbb{E}X_i \big)^2 \big]
    = \sum_{i = 1}^q \mathrm{Var}(X_i).
\end{align*}
We prove the following bound on the variance of the $q$-dimensional process $\alpha(\sigma_t)$, assuming that $\sigma_0 \in \Sigma_n^{\rho}$; its proof is provided in Section~\ref{subsec:variance}.
\begin{lem}\label{lem:spinFraction:variance}
There exists a constant $\rho = \rho(\beta, q) > 0$ such that
for all $0 \leq t \leq n\sqrt{n}$ and $\sigma_0 \in \Sigma_n^{\rho}$, we have $\mathrm{Var} \big(\alpha(\sigma_t)\big) = O \big({1}/{n}\big)$.
\end{lem}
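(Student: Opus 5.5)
Since the process $\alpha(\sigma_t)$ depends on the spin arrangement along the scan order, I will not try to analyse a scalar recursion. Instead I will control the variance via an Efron--Stein / martingale-difference (Doob) decomposition combined with the contraction of $g_\beta$ near $\hat e$. Fix $t \le n\sqrt n$ and write $\alpha(\sigma_t) - \mathbb{E}\alpha(\sigma_t) = \sum_{s=1}^{t} \Delta_s$, where $\Delta_s = \mathbb{E}[\alpha(\sigma_t)\mid\mathcal F_s] - \mathbb{E}[\alpha(\sigma_t)\mid \mathcal F_{s-1}]$. By orthogonality of martingale increments, $\mathrm{Var}(\alpha(\sigma_t)) = \sum_s \mathbb{E}\|\Delta_s\|_2^2$. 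The update at time $s$ changes one spin, so it shifts $\alpha(\sigma_s)$ by a vector of $\ell_1$-norm at most $2/n$. The task is therefore to show that this $O(1/n)$ perturbation influences the conditional expectation at time $t$ by at most $\frac{C}{n}\lambda^{(t-s)/n}$ for some $\lambda<1$ (contraction per scan). Summing the squares then gives $\sum_s \frac{C^2}{n^2}\lambda^{2(t-s)/n} = O(1/n)$, since there are $n$ terms per scan and geometric decay across scans.

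The key step is a one-step-influence estimate obtained via coupling. Take two copies that agree except at the single site updated at time $s$, and run them with the optimal single-site (maximal) coupling for all later updates. Let $h_r$ denote the expected Hamming distance at time $r$. At each later update of vertex $v$, the probability that the copies disagree after the update is $\frac12\|g_\beta(s)-g_\beta(\tilde s)\|_1$ evaluated at the current (almost equal) proportion vectors. By Lemma~\ref{lemma:g:taylor}\eqref{lemma:g:taylor:gDiffSingleCoordFinal}, this is at most $\frac{\beta}{q}(1+O(\rho'))\cdot \frac{h}{n}$, where $\rho'$ bounds $\|\alpha-\hat e\|_1$. Over one full scan each vertex is refreshed once, so every old disagreement is erased and replaced by new ones created at rate $\frac{\beta}{q}(1+O(\rho'))h/n$ per site. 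Hence $h$ after a scan is at most $(\beta/q)(1+O(\rho')) \cdot h$, up to lower-order terms. Since $\beta<\beta_s\le q$ (note $\beta_s<q$ for $q\ge3$ and $\beta_s=2$ for $q=2$), we have $\beta/q<1$, and choosing $\rho$ small makes $\lambda = (\beta/q)(1+O(q\rho))<1$. The influence on $\mathbb{E}\alpha(\sigma_t)$ is at most $\mathbb{E}\, d_H/n \le \frac{C}{n}\lambda^{\lfloor (t-s)/n\rfloor}$, giving the desired bound on $\|\Delta_s\|$.

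To justify $\rho'$ small throughout, I will invoke Lemma~\ref{lem:BurnIn:RhoBoundedConfigAndSpinFrac}\eqref{eqn:BurnIn:RhoBoundedConfigAndSpinFrac:endFrac}. Starting in $\Sigma_n^\rho$, with probability $1-O(n^{1/2}e^{-c\sqrt n})$ (a union over $\sqrt n$ scans), all intermediate configurations up to time $n\sqrt n$ have $\|\alpha-\hat e\|_\infty\le q\rho$. On the complementary bad event, $\|\alpha(\sigma_t) - \mathbb{E}\alpha(\sigma_t)\|_2^2\le 2$, which contributes $O(n^{1/2}e^{-c\sqrt n}) = o(1/n)$. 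To handle the conditioning cleanly, I will work with a stopped or modified process: freeze the influence estimate once the good event fails, or equivalently bound $\mathbb{E}\|\Delta_s\|^2$ by the good-event contribution plus $O(1)\cdot\Pr[\text{bad}]$ per step. The second piece sums to $n^{3/2}e^{-c\sqrt n}=o(1/n)$. This time horizon restriction is exactly why the lemma is stated for $t\le n\sqrt n$.

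I expect the main obstacle to be the coupling-contraction step. The disagreement created by a single update must be propagated correctly through the scan order: disagreements created early in a scan affect later updates within the same scan. One therefore has to track $h$ site by site, as $h_{r+1}\le h_r - \mathbf 1(\text{updated site disagreed}) + \frac{\beta}{q}(1+O(\rho))h_r/n$. Showing that this yields geometric decay per scan needs care, because it must account for which sites are yet to be updated. I would first try bounding the number of disagreements among not-yet-updated sites by its value at the scan start, and disagreements among updated sites by a Gr\"onwall-type sum, which gives a per-scan factor of roughly $\frac{\beta}{q}e^{\beta/q}$-type. If that factor is not below $1$ for all $\beta<\beta_s$, I would instead use the exact linear recursion whose rate $e^{-b}$ is defined by the equation $\beta(1-e^{-b})/(qb)=e^{-b}$. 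That $b>0$ gives the precise per-scan contraction $e^{-b}<1$, matching $c(\beta,q)$.
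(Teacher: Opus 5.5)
Your route differs from the paper's. The paper never tracks the influence of a single update. It splits $\alpha(\sigma_t)$ into four pieces:
\begin{itemize}
\item a deterministic part;
\item the martingale part $\frac1n\sum_i a_i$, whose orthogonal increments contribute at most $4/n$;
\item an $O(1/n)$ error;
\item the drift $\frac1n\sum_i g_\beta(\alpha(\sigma_{i-1}))$.
\end{itemize}
Cauchy--Schwarz bounds the drift variance by $\frac1n\sum_i\mathrm{Var}(g_\beta(\alpha(\sigma_{i-1})))$. Two independent copies and Lemma~\ref{lemma:g:taylor}\eqref{lemma:g:taylor:gDiffSquaredFinal} give $\mathrm{Var}(g_\beta(\alpha(\sigma_i)))\le\zeta\,\mathrm{Var}(\alpha(\sigma_i))+2\Pr[\overline{B}_{i-1}]$ with $\zeta=(\beta/q)^2+\delta\rho<1$. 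An induction then closes $\mathrm{Var}(\alpha(\sigma_t))\le C_2/n$.

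Because that is a uniform-in-time bound against a window average, the factor $\zeta<1$ is all it needs, and the bad event enters only through an unconditional probability. Your Doob-martingale/coupling argument must instead prove summable decay of single-update influences over up to $n^{3/2}$ steps. That costs the within-scan propagation analysis of Section~\ref{sec:recur}. In return, increments of size $O(\gamma^{t-s}/n)$ would give, via Azuma, essentially sub-Gaussian concentration of $\alpha(\sigma_t)$ at scale $1/\sqrt n$ (off an exponentially small bad event), not just a variance bound.

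As written, however, three steps do not go through.

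\textbf{First, the per-scan contraction claim is false.} It is not true that one scan multiplies the expected Hamming distance by $(\beta/q)(1+O(\rho))$. Your initial disagreement is the last site refreshed in its window, so new disagreements grow like $N'=\frac{\beta}{q}(1+N)$ across the window. After one scan this gives $e^{\beta/q}-1$, which exceeds $1$ once $\beta/q>\ln 2$ (e.g.\ $q=2$, $\beta\in(2\ln 2,2)$). Your fallback is therefore mandatory. Use the $n$-step recursion with $\beta$ replaced by $\beta'=\beta(1+O(\rho))<q$. Lemma~\ref{lem:recur:differenceBound} then gives expected Hamming distance at most $C(\gamma_n')^{t-s}$ after an $O(1)$ transient, with $(\gamma_n')^n\to e^{-b'}<1$. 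Then $\sum_s \frac{C^2}{n^2}(\gamma_n')^{2(t-s)}=O(1/n)$.

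\textbf{Second, the disagreement probability needs leave-one-out vectors.} At an update of $v$, compute it from the proportion vectors of the two configurations off $v$, which differ in $\ell_1$ by at most $2d_H/n$. Do not use the paper's bound~\eqref{eqn:coupling:ContractionExpectationRhoBound:1}. Its additive $2q^2\beta/n$ slack adds an order-one term per scan that does not vanish even when $d_H=0$, which kills the decay.

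\textbf{Third, the bad event cannot be split off globally.} $\Delta_s$ is a conditional expectation over the future. Moreover, Lemma~\ref{lem:BurnIn:RhoBoundedConfigAndSpinFrac} cannot be restarted from a mid-scan configuration that is only $\ell_\infty$-close to $\hat e$, since it requires good spread along the scan order. A workable fix:
\begin{itemize}
\item Discard the histories up to time $s-1$ with $\Pr[\text{bad}\mid\mathcal F_{s-1}]>e^{-c\sqrt n/2}$. By Markov's inequality these have exponentially small probability.
\item On the remaining histories, every perturbed start $\sigma_{s-1}^{v\to j}$ has bad probability at most $(e^\beta+q-1)e^{-c\sqrt n/2}$, because all update probabilities are at least $1/(e^\beta+q-1)$.
\item Run the Hamming recursion for the coupled pair, stopped when either copy leaves the good region.
\end{itemize}
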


We also require a tight control on the following recurrence;
its analysis is fleshed out in Section~\ref{sec:recur}.

\begin{lem}\label{lem:recur:upperBoundVector}
Let $\{x_t = (x_t^{(1)},\dots,x_t^{(q)})\}_{t \geq 0}$ be a sequence of vectors in $\mathbb{R}_{\geq 0}^q$ such that for all $j \in Q$ and $t \geq n$, we have the recursion
\begin{align*}
x_t^{(j)} &=  \frac{\beta}{nq}\sum_{i = t-n}^{t-1} x_i^{(j)} +\frac{C}{n} \sum_{i = t-n}^{t-1}  \lVert x_i \rVert_2^2 + \frac{C}{n},
\end{align*}
where $C$ is a positive constant. Then, 
if for all $i \in \{0,\dots,n-1\}$ and all $j \in Q$, we have
$\lvert x_i^{(j)}\rvert \leq \rho$ for a small enough constant $\rho > 0$, then there exists constant $\hat C > 0$ such that
\begin{align*}
     \lVert x_t\rVert_2^2  \leq \hat C\rho^2 \gamma_n^{2t} + \frac{\hat C}{n},
\end{align*}
where $\gamma_n$ is the unique positive real root of the equation $ \sum_{i= 1}^n \gamma_n^{-i} = nq/\beta;$ 
see Section~\ref{sec:recur} for additional details.
\end{lem}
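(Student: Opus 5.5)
We prove Lemma~\ref{lem:recur:upperBoundVector} by comparison with a one-dimensional linear recursion. The key structural fact is that the linear recursion $z_t = \frac{\beta}{nq}\sum_{i=t-n}^{t-1} z_i$ has $z_t = \gamma_n^t$ as an exact solution precisely when $\gamma_n^n = \frac{\beta}{nq}\sum_{i=1}^{n}\gamma_n^{n-i}$, i.e.\ $\sum_{i=1}^n \gamma_n^{-i} = nq/\beta$, which is the defining equation of $\gamma_n$. Since $\beta<\beta_s\le q$ (for $\beta<q$ the root satisfies $\gamma_n<1$, and in fact $\gamma_n^n\to e^{-b}$ with $b$ from the introduction), the recursion is contractive over each window of length $n$. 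By monotonicity of the right-hand side in the $x_i$'s, it suffices to build a supersolution.

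\textbf{Step 1 (reduction to a scalar).} Since all coordinates obey the same recursion and the initial data are bounded by $\rho$, the coordinates differ only through initial conditions. Let $m_t$ be a scalar sequence with $m_i=\rho$ for $i<n$ and
\[
m_t=\frac{\beta}{nq}\sum_{i=t-n}^{t-1} m_i+\frac{Cq}{n}\sum_{i=t-n}^{t-1}m_i^2+\frac Cn .
\]
By induction, $0\le x_t^{(j)}\le m_t$ for all $j$, using $\lVert x_i\rVert_2^2\le q\,m_i^2$. Hence $\lVert x_t\rVert_2^2\le q m_t^2$, and it is enough to show $m_t\le A\rho\gamma_n^t + B/\sqrt n$. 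Note the target is stated for $\lVert x_t\rVert_2^2$, so we need $m_t = O(\rho\gamma_n^t + n^{-1/2})$.

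\textbf{Step 2 (supersolution ansatz).} We aim to show $m_t\le u_t:=A\rho\gamma_n^{t}+B/\sqrt n$ for constants $A\ge \gamma_n^{-n}$ (so that $u_i\ge\rho$ on the initial window; $\gamma_n^{-n}=O(1)$) and $B$ large. Plugging $u$ into the right-hand side, the linear part reproduces $A\rho\gamma_n^t$ exactly, plus $\frac{\beta}{q}\frac{B}{\sqrt n}$. The quadratic part is at most $2Cq\bigl(A^2\rho^2\frac1n\sum_{i=t-n}^{t-1}\gamma_n^{2i}+B^2/n\bigr)$. The constant term is $C/n$. For $u$ to dominate we need
\[
\frac{\beta}{q}\frac{B}{\sqrt n}+\frac{2CqB^2+C}{n}\le \frac{B}{\sqrt n}
\quad\text{and}\quad
2CqA^2\rho^2\gamma_n^{2(t-n)}\le \text{slack}.
\]
The first holds for large $n$ since $\beta/q<1$.

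\textbf{Step 3 (the main obstacle).} The quadratic term $\rho^2\gamma_n^{2(t-n)}$ decays like the square of the linear term but has no slack in the exact linear balance. We would absorb it by replacing $\gamma_n$ in the ansatz with a slightly larger rate, or more cleanly by using the ansatz $u_t = A\rho\gamma_n^t(2-\gamma_n^{t})$ or $A\rho\gamma_n^t\prod(1+c\rho\gamma_n^{s})$. The correction factor $\prod_{s}(1+c\rho\gamma_n^{s})$ converges to a constant because $\sum_s\gamma_n^{s}$ over windows is geometric in scans: it sums to $O(n)$ in site steps, but the relevant quantity is per-scan, giving $\sum_k \rho\,\gamma_n^{kn}=O(\rho)$. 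Concretely, we set $u_t=A\rho\,\gamma_n^t\,\pi_t+B/\sqrt n$ with $\pi_t$ nondecreasing and $\pi_t\le 2$. We then verify that the quadratic error at time $t$ is at most $\frac{\beta}{nq}\sum_{i=t-n}^{t-1}A\rho\gamma_n^{i}(\pi_t-\pi_i)$, which holds if $\pi_t-\pi_{t-n}\gtrsim \rho\gamma_n^{t-n}$. Choosing $\rho$ small keeps $\pi\le 2$. The cross term $2A\rho\gamma_n^t\cdot B/\sqrt n$ inside the square is handled by AM-GM: it is bounded by a small multiple of each term, absorbed by increasing $B$ and $\pi$. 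Squaring the final bound gives $\lVert x_t\rVert_2^2\le q u_t^2\le \hat C\rho^2\gamma_n^{2t}+\hat C/n$.
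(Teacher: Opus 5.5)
Your proof takes a different route from the paper's, and its overall structure is sound; Step~3, however, does not close as written and needs the concrete fix given in the second paragraph.

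\textbf{Comparison with the paper.} The paper splits $x^{(k)}_t=y_t+d_t$. Here $y_t$ solves the homogeneous recursion and is bounded by $C_1\rho\gamma_n^t$ via $\frac{\beta}{nq}\sum_{i=t-n}^{t-1}\gamma_n^i=\gamma_n^t$. The term $d_t$ is the response to the forcing $b_t=\frac Cn\sum_{i=t-n}^{t-1}\lVert x_i\rVert_2^2+\frac Cn$, and is controlled by the kernel bound $c_t(\ell)\le (C/n)\gamma_n^{t-\ell}$ of Lemma~\ref{lem:recur:differenceBound}. A bootstrap induction on $\lVert x_t\rVert_2^2\le 4C_1^2q\rho^2\gamma_n^{2t}+4C_1^2q/n$ then shows the quadratic feedback contributes only $O(\rho^2\gamma_n^t)$, which is absorbed into the amplitude for small $\rho$. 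You instead use monotonicity to pass to a scalar majorant $m_t$ (your Step~1 is correct) and look for an explicit supersolution. This avoids the kernel lemma entirely. The paper's route has the advantage that the same kernel estimate is reused in Lemmas~\ref{lem:recur:upperBoundThreeSeq} and~\ref{lem:recur:lowerBound}. The mechanism you point to is the same in both: the forcing decays like $\gamma_n^{2t}$, so it only renormalizes the amplitude by a factor $1+O(\rho)$.

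\textbf{The gap in Step~3.} The slack at time $t$ is the window average $\frac{\beta}{nq}\sum_{i=t-n}^{t-1}A\rho\gamma_n^i(\pi_t-\pi_i)$. It depends on how the growth of $\pi$ is spread inside the window, so your criterion $\pi_t-\pi_{t-n}\gtrsim\rho\gamma_n^{t-n}$ is not sufficient. For example, a $\pi$ that jumps once per scan satisfies the criterion. Yet at the end of each scan only the single term $i=t-n$ contributes, giving slack of order $\rho^2\gamma_n^{2t}/n$, far below the $\Theta(\rho^2\gamma_n^{2t})$ quadratic error. Separately, $\prod_s(1+c\rho\gamma_n^s)$ taken over site steps is $e^{\Theta(\rho n)}$, not bounded; the increments need an extra factor $1/n$.

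\textbf{The fix.} Your first candidate works. Take $u_t=A\rho\gamma_n^t(2-\gamma_n^t)+B/\sqrt n$ with $A\ge\sup_n\gamma_n^{-n}$, so that $u_i\ge\rho$ for $i<n$. The defining identity of $\gamma_n$ gives exactly
\[
u_t-\frac{\beta}{nq}\sum_{i=t-n}^{t-1}u_i=A\rho\,(S_2-1)\gamma_n^{2t}+\Big(1-\frac{\beta}{q}\Big)\frac{B}{\sqrt n},\qquad S_2:=\frac{\beta}{nq}\sum_{k=1}^n\gamma_n^{-2k}\ge\frac{q}{\beta},
\]
where the last inequality is Cauchy--Schwarz. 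On the other hand,
$\frac{Cq}{n}\sum_{i=t-n}^{t-1}u_i^2+\frac Cn\le 8CqA^2\rho^2\gamma_n^{-2n}\gamma_n^{2t}+\frac{2CqB^2+C}{n}$.
Therefore, choosing $\rho\le (q/\beta-1)/(8CqA^3)$ and $n$ large makes $u$ a supersolution. Your Step~1 then gives $\lVert x_t\rVert_2^2\le q u_t^2\le 8qA^2\rho^2\gamma_n^{2t}+2qB^2/n$, which is the claim.
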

We combine the three lemmas above to obtain the correct rate of contraction towards $\hat e$ of the proportions vector under the systematic scan dynamics. Let
\[
u_t = \begin{cases}
e_{J_t} & \text{ if } t \geq 1 \\
e_{\sigma_0(n + t)} & \text{ if } -n< t\leq 0,
\end{cases}
\]
where recall that 
$J_t$ denotes the random variable corresponding to the spin chosen 
in the $t$-th site update.
With this notation, we have
\begin{align}
    \alpha(\sigma_t) = \frac{1}{n} \sum_{i = t-n+1}^t u_i. \label{eqn:spinFraction:alphaDefn}
\end{align}
\begin{lem}\label{lem:spinFraction:ExpectContraction}
Let $\gamma_n$ be the unique positive real root of the equation $ \sum_{i= 1}^n \gamma_n^{-i} = nq/\beta$.
Then, for all integer $t \ge 0$ such that $t \leq n\sqrt{n}$ there exist constants $\rho = \rho(\beta, q)$ and $C = C(\rho, \beta, q)$ such that 
for any $\sigma_0 \in \Sigma_n^{\rho}$
\begin{align*}
        \mathbb{E}\big[ \lVert \alpha(\sigma_{t}) - \hat{e} \rVert_2^2 \big] \leq C \rho^2 \gamma_n^{2t}  +\frac{C}{n}.
    \end{align*}
\begin{proof}
We let $t_m = t_m(t) =\max\{1, t-n+1\}$. From \eqref{eqn:spinFraction:alphaDefn}, the following holds for all $ t \geq 0$ and $\ell \in Q$
\begin{align*}
    \alpha_\ell(\sigma_t) = \frac{1}{n} \sum_{i = t-n+1}^0 \mathbf{1}\big(\sigma_0(n+i) = \ell\big) + \frac{1}{n}\sum_{i = t_m}^{t} \mathbf{1}\big(J_i = \ell\big),
\end{align*}
and thus
\begin{align}
    \mathbb{E}\big[\alpha_\ell(\sigma_t)\big] 
        &= \frac{1}{n} \sum_{i = t-n+1}^0 \mathbf{1}\big(\sigma_0(n+i) = \ell\big) + \frac{1}{n}\sum_{i = t_m}^{t} \mathbb{E} \big[\mathbf{1}(J_i = \ell)\big]  \notag\\
        &= \frac{1}{n} \sum_{i = t-n+1}^0 \mathbf{1}\big(\sigma_0(n+i) = \ell\big) + \frac{1}{n}\sum_{i = t_m}^{t}  \mathbb{E}\Big[g_\beta^\ell\Big(\alpha(\sigma_{i-1}) - \frac{1}{n}u_{i-n}\Big)\Big]. \label{lem:spinFraction:ExpectContraction:1}
\end{align}
From Lemmas~\ref{lem:BurnIn:XFractionDominatesGibbsProbability} and~\ref{lemma:g:taylor}\eqref{lemma:g:taylor:Taylorg_beta^lFinal}, we obtain
\begin{align*}
    \mathbb{E}\Big[g_\beta^\ell\Big(\alpha(\sigma_{i-1}) - \frac{1}{n}u_{i-n}\Big)\Big]
    &\leq \frac{\beta}{n} + \mathbb{E}\Big[g_\beta^\ell\big(\alpha(\sigma_{i-1}) \big)\Big] \\
    &\leq \frac{\beta}{n} + \frac{1}{q} + \frac{\beta} {q}\mathbb{E}\Big[\alpha_\ell(\sigma_{i-1}) - \frac{1}{q} \Big]  + 3q\beta^2 \mathbb{E}\big[  \lVert\alpha(\sigma_{i}) - \hat e \rVert_2^2\big]\\
    &\leq \frac{\beta}{n} + 3q\beta^2\mathrm{Var}\big(\alpha(\sigma_i)\big) + \frac{1}{q} + \frac{\beta} {q}\mathbb{E}\Big[\alpha_\ell(\sigma_{i-1}) - \frac{1}{q} \Big]  + 3q\beta^2 \big\lVert \mathbb{E}[  \alpha(\sigma_{i}) - \hat e ] \big\rVert_2^2.
\end{align*} 
By Lemma~\ref{lem:spinFraction:variance}, there exists a constant $C_1 > 0$ such that the following holds for all $t \in \{0, \cdots, n\sqrt n\}$
\begin{align*}
    \mathbb{E}\Big[g_\beta^\ell\Big(\alpha(\sigma_{i-1}) - \frac{1}{n}u_{i-n}\Big)\Big]
    &\leq \frac{C_1}{n}  + \frac{1}{q} + \frac{\beta} {q}\mathbb{E}\Big[\alpha_\ell(\sigma_{i-1}) - \frac{1}{q} \Big]  + 3q\beta^2 \big\lVert \mathbb{E}[  \alpha(\sigma_{i}) - \hat e ] \big\rVert_2^2.
\end{align*}
Plugging the above inequality into \eqref{lem:spinFraction:ExpectContraction:1} 
\begin{align*}
    \mathbb{E}\big[\alpha_\ell(\sigma_t) \big] 
    \leq & \frac{C_1}{n} +  \frac{1}{n} \sum_{i = t-n+1}^0 \Big[\mathbf{1}\big(\sigma_0(n+i) = \ell\big)  \Big] \\
    &+ \frac{t - t_m +1}{nq} + \frac{1}{n}\sum_{i = t_m}^{t}  \frac{\beta} {q}\mathbb{E}\Big[\alpha_\ell(\sigma_{i-1}) - \frac{1}{q} \Big]  + \frac{3q\beta^2}{n}\sum_{i = t_m}^{t}\big\lVert \mathbb{E}[  \alpha(\sigma_{i-1}) - \hat e ] \big\rVert_2^2 ,
\end{align*}
and subtracting $1/q$ from both sides yields,
\begin{align}
    \mathbb{E}\Big[\alpha_\ell(\sigma_t) - \frac{1}{q}\Big] 
    \leq & \frac{C_1}{n} +  \frac{1}{n} \sum_{i = t-n+1}^0 \Big[\mathbf{1}\big(\sigma_0(n+i) = \ell\big) - \frac{1}{q} \Big] \notag\\
    &+ \frac{1}{n}\sum_{i = t_m}^{t}  \frac{\beta} {q}\mathbb{E}\Big[\alpha_\ell(\sigma_{i-1}) - \frac{1}{q} \Big]  + \frac{3q\beta^2}{n}\sum_{i = t_m}^{t}\big\lVert \mathbb{E}[  \alpha(\sigma_{i-1}) - \hat e ] \big\rVert_2^2  .\label{lem:spinFraction:ExpectContraction:4}
\end{align}
Let $\{x_t\}_{t \geq 0}$ be a sequence of vectors in $\mathbb{R}^q$ such that for all $j \in Q$ and $t \geq n$, we have the recursion
\begin{align*}
x_t^{(j)} &=  \frac{\beta}{nq}\sum_{i = t-n}^{t-1} x_i^{(j)} +\frac{C_2}{n} \sum_{i = t-n}^{t-1}  \lVert x_i \rVert_2^2 + \frac{C_2}{n}.
\end{align*}
where $C_2 = \max\{C_1, 3q\beta^2\}$. For all $i \in \{0,\dots,n-1\}$ and all $j \in Q$, $ x_i^{(j)} = 2\rho$. We prove by induction on $t$ that for every $\ell \in Q$ and $t \in \{0, \cdots, n\sqrt{n}\}$, the following holds 
\begin{align}
    \Big\lvert \mathbb{E}\Big[\alpha_\ell(\sigma_t) - \frac{1}{q}\Big] \Big \rvert \leq x_t^{(\ell)}. \label{lem:spinFraction:ExpectContraction:3}
\end{align}
For $t = 0$,  the claim trivially holds since $\sigma_0 \in \Sigma_n^\rho$ and therefore $\lvert \alpha_\ell(\sigma_0) - 1/q \rvert \leq \rho < 2\rho$.  
We consider three cases for the inductive step.
We assume first that the claim holds for all $t \in \{1, \cdots, \kappa-1\} $ for some $\kappa  \in \{1, \cdots, n-n^{3/4} - 1\}$. For $t = \kappa$, we obtain the following from \eqref{lem:spinFraction:ExpectContraction:4} and triangle inequality 
\begin{align*}
    \Big\lvert \mathbb{E}\Big[\alpha_\ell(\sigma_t) - \frac{1}{q}\Big] \Big \rvert 
    \leq & \frac{C_1}{n} +  \frac{1}{n}  \Big \lvert\sum_{i = t-n+1}^0 \Big[\mathbf{1}\big(\sigma_0(n+i) = \ell\big) - \frac{1}{q} \Big] \Big\rvert \\
    &+ \frac{\beta}{nq} \sum_{i = t_m}^{t} \Big\lvert \mathbb{E}\Big[\alpha_\ell(\sigma_{i-1}) - \frac{1}{q} \Big] \Big\rvert  + \frac{3q\beta^2}{n}\sum_{i = t_m}^{t}\big\lVert \mathbb{E}[  \alpha(\sigma_{i-1}) - \hat e ] \big\rVert_2^2 .
\end{align*}
Since $\sigma_0 \in \Sigma_n^\rho$ , we observe that $\Big \lvert\sum_{i = t-n+1}^0 \Big[\mathbf{1}\big(\sigma_0(n+i) = \ell\big) - \frac{1}{q} \Big] \Big\rvert \leq (n - t) \rho$, using the inductive hypothesis to bound the second and third sums we obtain
\begin{align*}
    \Big\lvert \mathbb{E}\Big[\alpha_\ell(\sigma_t) - \frac{1}{q}\Big] \Big \rvert 
    &\leq  \frac{C_1}{n} +  \frac{(n-t)\rho}{n}  
    + \frac{\beta}{nq} (2t \rho) + \frac{12q\beta^2}{n} \big(tq\rho^2\big) \\
    &= \frac{C_1}{n } + \rho\Big[ 1 + \frac{t}{n}\Big((2\beta/q) - 1  + 12q^2\beta^2 \rho\Big)\Big].
\end{align*}
Set $1 - \beta/q = \delta$ so that $2\beta/q - 1 = 1- 2\delta $ (recall that $\beta/q < 1$). Since $12q^2\beta^2$ is a fixed constant, we can choose $\rho$ small enough so that $12q^2\beta^2\rho < \delta$. Plugging these into the above inequality:
\begin{align*}
    \Big\lvert \mathbb{E}\Big[\alpha_\ell(\sigma_t) - \frac{1}{q}\Big] \Big \rvert 
    &\leq  \frac{C_1}{n } + \rho\Big[ 1 + (1-\delta)\Big] = 2\rho + \Big(\frac{C_1}{n } + -\rho\delta\Big) \leq 2\rho \leq x_t^{(\ell)}
\end{align*}
provided $n$ is large enough. 
For the second case, let us assume that the claim holds for all $t \in \{1, \cdots, \kappa-1\} $ for some $\kappa  \in \{n-n^{3/4}, \cdots, n-1\}$. For $t = \kappa$:
\begin{align*}
    \Big\lvert \mathbb{E}\Big[\alpha_\ell(\sigma_t) - \frac{1}{q}\Big] \Big \rvert 
        &\leq  \frac{C_1}{n} +  \frac{n^{3/4}}{n}  
    + \frac{2\beta t\rho}{nq}   + \frac{12q^2\beta^2 t \rho^2}{n} 
        \leq  \frac{C_1}{n} +  \frac{1}{n^{1/4}}  
    + \rho\Big(\frac{2\beta }{q}   + 12q^2\beta^2 \rho \Big) \\
    &\leq  2\rho  
    + \Big(\frac{C_1}{n} +  \frac{1}{n^{1/4}} -\rho\delta \Big)  \leq 2\rho \leq x_t^{(\ell)}.
\end{align*}
For the third case, We assume that the claim holds for all $t \in \{0, \cdots, \kappa-1\}$ for some $\kappa \in \{n, \cdots, n\sqrt{n}\}$. For $t = \kappa$, 
\begin{align*}
    \bigg \lvert \mathbb{E}\Big[\alpha_\ell(\sigma_t) - \frac{1}{q}\Big] \bigg \rvert
    &\leq  \frac{C_1}{n} 
    + \frac{\beta}{qn} \bigg\lvert \sum_{i = t-n}^{t-1}  \frac{\beta} {q}\mathbb{E}\Big[\alpha_\ell(\sigma_{i}) - \frac{1}{q} \Big] \bigg\rvert  + \frac{3q\beta^2}{n}\sum_{i = t-n}^{t-1}\big\lVert \mathbb{E}[  \alpha(\sigma_{i}) - \hat e ] \big\rVert_2^2  \\
    &\leq \frac{C_2}{n} 
    + \frac{\beta}{qn}\sum_{i = t-n}^{t-1}  x^{(\ell)}_i  + \frac{C_2}{n}\sum_{i = t-n}^{t-1}\big\lVert x_i\rVert_2^2 =x_t^{(\ell)},
\end{align*}
which proves the claim \eqref{lem:spinFraction:ExpectContraction:3}. Using the Lemma \ref{lem:recur:upperBoundVector}, we show that the following holds for some constant $C_3 > 0$
\begin{align*}
        \Big\lVert \mathbb{E}\big[\alpha(\sigma_t) - \hat e\big] \Big \rVert_2^2 \leq \lVert x_t\rVert_2^2 \leq C_3\rho^2 \gamma_n^{2t} + \frac{C_3}{n}.
\end{align*}
Using Lemma~\ref{lem:spinFraction:variance} and the above equation, we prove this lemma.
\end{proof}
\end{lem}

\subsection{Variance bound}
\label{subsec:variance}

In this section, we establish the bound on the variance in Lemma~\ref{lem:spinFraction:variance}.

\begin{proof}[Proof of Lemma~\ref{lem:spinFraction:variance}]
Lemma~\ref{lem:BurnIn:XFractionDominatesGibbsProbability} implies that $g^{j}_\beta(s - \frac{1}{n}e_\kappa) - g^j(s) \leq \frac{\beta}{n}$ holds for all $s \in \mathcal{S}_q$ and $\kappa, j \in Q$. We then argue the following for any $s \in \mathcal{S}_q$ and $\kappa \in Q$,
\begin{align}
    \Big\lVert g_\beta\Big(s - \frac{1}{n} e_\kappa\Big) - g_\beta(s)\Big\rVert_1 \leq \frac{q^2\beta}{n}. \label{eqn:spinFraction:1/nDifferenceUBound}
\end{align}
We expand the spin fraction at $t > 0$ assuming $t_m = t_m(t) =\max\{1, t-n+1\}$,
\begin{align}
    \alpha(\sigma_t) &= \frac{1}{n} \sum_{i = t-n+1}^{t} u_i = \frac{1}{n} \sum_{i = t-n+1}^{0} u_i + \frac{1}{n} \sum_{i = t_m}^{t} u_i \notag \\
        &= \frac{1}{n} \sum_{i = t-n+1}^{0} u_i + \frac{1}{n} \sum_{i = t_m}^{t} \Big[ u_i - g_\beta \Big( \alpha(\sigma_{i-1}) - \frac{1}{n}u_{i-n} \Big) + g_\beta \Big(\alpha(\sigma_{i-1}) - \frac{1}{n}u_{i-n} \Big) \Big ]\notag\\
                 &= \xi + \frac{1}{n} \sum_{i = t-n+1}^{0} u_i + \frac{1}{n} \sum_{i = t_m}^{t} \Big[ u_i - g_\beta \big( \alpha(\sigma_{i-1}) - \frac{1}{n}u_{i-n} \big)\Big] + \frac{1}{n} \sum_{i = t_m}^{t} g_\beta \big(\alpha(\sigma_{i-1}) \big) ,\label{eqn:spinFraction:variance:alphaExpansion}
\end{align}
where the last equality holds by~\eqref{eqn:spinFraction:1/nDifferenceUBound} for some $\xi \in \mathbb{R}^q $ such that $\lVert \xi \rVert_1 \leq q^2\beta/n$. We observe that the following holds
\begin{align}
\mathbb{E}[u_i \mid \mathcal{F}_{i-1}] = g_\beta \Big( \alpha(\sigma_{i-1}) - \frac{1}{n}u_{i-n} \Big).  \label{eqn:spinFraction:variance:expectationu_i}
\end{align}
We use the following notation for $i \geq 1$,
\begin{align}
    a_i &= u_i - g_\beta \Big( \alpha(\sigma_{i-1}) - \frac{1}{n}u_{i-n} \Big).
    \label{eq:ai}
\end{align}
We obtain a bound on the variance using \eqref{eqn:spinFraction:variance:alphaExpansion} as
\begin{align}
    \mathrm{Var}\big(\alpha(\sigma_t)\big) 
    = &O \Big(\frac{1}{n}\Big) + \frac{1}{n^2}  \mathrm{Var}\Big(\sum_{i = t_m }^{t} a_i\Big) + \frac{1}{n^2} \mathrm{Var} \Big(\sum_{i = t_m}^{t} g_\beta \big(\alpha(\sigma_{i-1}) \big) \Big) \notag\\
    &+ \frac{2}{n^2} \text{Cov}\bigg( \sum_{i = t_m }^{t} a_i, \sum_{i = t_m}^{t} g_\beta \big(\alpha(\sigma_{i-1}) \big)  \bigg)\notag\\ 
    \leq& O \Big(\frac{1}{n}\Big) + \frac{1}{n^2}  \mathrm{Var}\Big(\sum_{i = t_m }^{t} a_i\Big) + \frac{1}{n^2} \mathrm{Var} \Big(\sum_{i = t_m}^{t} g_\beta \big(\alpha(\sigma_{i-1}) \big) \Big) \notag\\
    &+ \frac{2}{n^2} \bigg\{ \mathrm{Var}\Big(\sum_{i = t_m }^{t} a_i\Big) \mathrm{Var} \Big(\sum_{i = t_m}^{t} g_\beta \big(\alpha(\sigma_{i-1}) \big) \Big) \bigg\}^\frac{1}{2},
    \label{eqn:spinFraction:variance:MainEqn}
\end{align}
using the fact that $\text{Cov}(X,Y) \le \sqrt{\mathrm{Var}(X)\mathrm{Var}(Y)}$. Now, recall that
\begin{align*}
    \mathrm{Var}\Big(\sum_{i = t_m }^{t} a_i\Big)
        &= \mathbb{E}\Big[ \Big\lVert\sum_{i =t_m }^{t} a_i\Big\rVert_2^2 \Big] - \Big\lVert\mathbb{E}\Big[\sum_{i =t_m }^{t} a_i\Big] \Big\rVert_2^2.
\end{align*}
From \eqref{eqn:spinFraction:variance:expectationu_i} 
and~\eqref{eq:ai}, we obtain that $\mathbb E[a_i\mid \mathcal F_{i-1}]=\vec0$, and using the tower property of expectations we can deduce that 
\begin{align*}
\Big\lVert\mathbb{E}\Big[\sum_{i =t_m }^{t} a_i\Big] \Big\rVert_2^2 = 0.
\end{align*}
Then, 
\begin{align*}
     \mathrm{Var}\Big(\sum_{i = t_m }^{t} a_i\Big) &= \mathbb{E}\Big[ \Big\lVert\sum_{i =t_m }^{t} a_i\Big\rVert_2^2 \Big]  
        = \sum_{i =t_m }^{t}\mathbb{E}\big[  \lVert a_i \rVert_2^2\big]  + 2\sum_{t_m \leq i < j\leq t }^{t} \mathbb{E}\Big[\mathbb{E}\big[\langle a_i, a_j \rangle \big\vert \mathcal{F}_{i}\big] \Big]. 
\end{align*}
From~\eqref{eq:ai} and \eqref{eqn:spinFraction:variance:expectationu_i}, we have for $i < j$ $$
\mathbb{E}\big[\mathbb{E}\big[\langle a_i, a_j \rangle \big\vert \mathcal{F}_{i}\big] \big] = \mathbb{E}\big[\langle a_i, \mathbb{E} [a_j \mid \mathcal{F}_{i}] \rangle \big]  = 0,$$
and thus since $\lVert a_i \rVert_2^2 \leq 4$ we obtain
\begin{align}
     \mathrm{Var}\Big(\sum_{i = t_m }^{t} a_i\Big)
= \sum_{i =t_m }^{t}\mathbb{E}\big[  \lVert a_i \rVert_2^2\big]    \leq 4n. 
\label{eqn:spinFraction:variance:varZt}
\end{align}
We write the second variance term from \eqref{eqn:spinFraction:variance:MainEqn} as
\begin{align}
    \mathrm{Var} \Big(\sum_{i = t_m}^{t} g_\beta \big(\alpha(\sigma_{i-1}) \big) \Big)
        \leq&  \sum_{i = t_m}^{t} \mathrm{Var} \Big(g_\beta \big(\alpha(\sigma_{i-1}) \big)\Big) \notag\\
            &+ 2 \sum_{t_m \leq i < j \leq t} \sqrt{\mathrm{Var} \Big(g_\beta \big(\alpha(\sigma_{i-1}) \big)  \Big) \mathrm{Var} \Big(g_\beta \big(\alpha(\sigma_{j-1}) \big)  \Big) } \notag\\
        \leq&  \bigg(\sum_{i = t_m}^{t} \sqrt{\mathrm{Var} \Big(g_\beta \big(\alpha(\sigma_{i-1}) \big)\Big)} \bigg)^2 \notag\\
       \leq& (t - t_m + 1)\sum_{i = t_m}^{t} \mathrm{Var} \Big(g_\beta \big(\alpha(\sigma_{i-1}) \big)\Big) \leq n\sum_{i = t_m}^{t} \mathrm{Var} \Big(g_\beta \big(\alpha(\sigma_{i-1}) \big)\Big),
        \label{eqn:spinFraction:variance:varYtExpansion}
\end{align}
where we use Cauchy-Schwarz inequality for the second to last inequality. 

The next step of the proof is to bound  $\mathrm{Var} \big(g_\beta \big(\alpha(\sigma_{i-1}) \big)\big)$. For this consider two independent copies of the dynamics $\{\sigma_t\}_{t \geq 0}$ and $\{\tilde{\sigma}_t\}_{t \geq 0}$, both with the same initial configuration such that $\tilde{\sigma}_0 = \sigma_0$. We define event $B_i = \big\{\lVert\alpha(\sigma_i) - \hat{e}\rVert_2 < q\rho , \lVert\alpha(\tilde{\sigma}_i) - \hat{e}\rVert_2 < q\rho \big\}$. Since the identity $\mathrm{Var}(X) = \frac{1}{2}\mathbb{E}[\lVert X - \tilde{X} \rVert_2^2]$ holds for any independent random variables $X, \tilde{X} \in \mathbb{R}^q$ following the same probability distribution, we obtain the following
\begin{align}
    \mathrm{Var}\Big(g_\beta\big(\alpha(\sigma_i)\big)\Big) 
        =& \frac{1}{2} \mathbb{E} \Big[ 
        \big\lVert g_\beta\big(\alpha(\sigma_i)\big) - g_\beta\big(\alpha(\tilde{\sigma}_i)\big) \big\rVert_2^2
        \Big] \notag\\
        =& \frac{1}{2} \mathbb{E} \Big[ 
        \big\lVert g_\beta\big(\alpha(\sigma_i)\big) - g_\beta\big(\alpha(\tilde{\sigma}_i)\big) \big\rVert_2^2
        \,\,\Big\vert\,\, B_{i-1} \Big] \Pr[B_{i-1}] \notag \\
        &+ \frac{1}{2} \mathbb{E} \Big[ 
        \big\lVert g_\beta\big(\alpha(\sigma_i)\big) - g_\beta\big(\alpha(\tilde{\sigma}_i)\big) \big\rVert_2^2
        \,\,\Big\vert\,\, \overline{B}_{i-1} \Big] \Pr[\overline{B}_{i-1}] \notag\\
        \leq& \frac{1}{2}  \Big(\big(\frac{\beta}{q}\big)^2 + \delta\rho\Big)\mathbb{E} \Big[ 
        \big\lVert \alpha(\sigma_i) - \alpha(\tilde{\sigma}_i) \big\rVert_2^2
        \,\,\Big\vert\,\,  B_{i-1} \Big] \Pr[B_{i-1}] \notag\\
        &+ 
         \frac{1}{2} \mathbb{E} \Big[ 
        \big\lVert g_\beta\big(\alpha(\sigma_i)\big) - g_\beta\big(\alpha(\tilde{\sigma}_i)\big) \big\rVert_2^2
        \,\, \Big\vert\,\, \overline{B}_{i-1} \Big] \Pr[\overline{B}_{i-1}],\label{eqn:spinFraction:variance:VargAlphaIntermediate}
\end{align}
for a suitable constant $\delta >0$ by Lemma~\ref{lemma:g:taylor}\eqref{lemma:g:taylor:gDiffSquaredFinal}
For ease of notation, let $\mathcal X = \Big\lVert g_\beta\big(\alpha(\sigma_i)\big) - g_\beta\big(\alpha(\tilde{\sigma}_i)\big) \Big\rVert_2^2 $
and $\mathcal Y = \big\lVert \alpha(\sigma_i) - \alpha(\tilde{\sigma}_i) \big\rVert_2^2$.
Then, we obtain
\begin{align*}
     \frac{1}{2} \mathbb{E} \big[ \mathcal X \mid \overline{B}_{i-1} \big] \Pr[\overline{B}_{i-1}] 
        =& \frac{1}{2} \mathbb{E} \big[ \mathcal X \mid \overline{B}_{i-1} \big] \Pr[\overline{B}_{i-1}] 
        + \frac{1}{2} \Big(\big(\frac{\beta}{q}\big)^2 + \delta\rho\Big) \mathbb{E} \big[ \mathcal Y
        \mid \overline{B}_{i-1} \big] \Pr[\overline{B}_{i-1}] \\
        &- \frac{1}{2} \Big(\big(\frac{\beta}{q}\big)^2 + \delta\rho\Big) \mathbb{E} \big[ 
        \mathcal Y
        \mid \overline{B}_{i-1} \big] \Pr[\overline{B}_{i-1}] \\
        = & \frac{1}{2} \Big(\big(\frac{\beta}{q}\big)^2 + \delta\rho\Big) \mathbb{E} \Big[ \mathcal{Y}
        \Big\vert \overline{B}_{i-1} \Big] \Pr[\overline{B}_{i-1}] \\
        &+ \frac{1}{2} \Pr[\overline{B}_{i-1}] \Big(\mathbb{E} \Big[ 
        \mathcal{X}
        \Big\vert \overline{B}_{i-1} \Big]  
     -  \Big(\big(\frac{\beta}{q}\big)^2 + \delta\rho\Big) \mathbb{E} \Big[ 
        \mathcal{Y}
        \Big\vert \overline{B}_{i-1} \Big] \Big)
        \\
        \leq & \frac{1}{2} \Big(\big(\frac{\beta}{q}\big)^2 + \delta\rho\Big) \mathbb{E} \Big[ \mathcal{Y}
        \Big\vert \overline{B}_{i-1} \Big] \Pr[\overline{B}_{i-1}] + 2\Pr[\overline{B}_{i-1}],
 \end{align*}
 using the fact that $\alpha(\sigma_i), g_\beta(\alpha(\sigma_i)) \in \mathcal{S}_q$ are bounded and $\Big(\big(\frac{\beta}{q}\big)^2 + \delta\rho\Big) < 1$ for small enough $\rho$. Now, plugging the above in the equation \eqref{eqn:spinFraction:variance:VargAlphaIntermediate}, it follows that
\begin{align}
     \mathrm{Var}\Big(g_\beta\big(\alpha(\sigma_i)\big)\Big)  = 
     \Big(\big(\frac{\beta}{q}\big)^2 + \delta\rho\Big) \mathrm{Var}\big(\alpha(\sigma_i)\big) + 2\Pr[\overline{B}_{i-1}].
     \label{eqn:spinFraction:variance:VargAlpha}
 \end{align}
Therefore, combining \eqref{eqn:spinFraction:variance:varYtExpansion}, \eqref{eqn:spinFraction:variance:VargAlpha} and using Lemma \ref{lem:BurnIn:RhoBoundedConfigAndSpinFrac}, we obtain the following for some constant $c_1 > 0$ 
\begin{align}
    \mathrm{Var} \Big(\sum_{i = t_m}^{t} g_\beta \big(\alpha(\sigma_{i-1}) \big) \Big) 
    &\leq n \Big(\big(\frac{\beta}{q}\big)^2 + \delta\rho\Big) \sum_{i = t_m}^t \mathrm{Var}\big( \alpha(\sigma_i)\big) + 4qn^4 e^{-c_1\sqrt{n}}. \label{eqn:spinFraction:variance:VarYtToVarAlpha}
\end{align}
Finally, plugging \eqref{eqn:spinFraction:variance:varZt} , \eqref{eqn:spinFraction:variance:VarYtToVarAlpha} into \eqref{eqn:spinFraction:variance:MainEqn}, we obtain the following for some $C_1 > 0$ and all $0 \leq  t < n\sqrt{n}$,
\begin{align}
    \mathrm{Var}\big(\alpha(\sigma_t)\big) &\leq O \Big(\frac{1}{n}\Big) + \bigg( \frac{1}{n}  \mathrm{Var}\Big(\sum_{i = t_m }^{t} a_i\Big)^{1/2} + \frac{1}{n}  \mathrm{Var} \Big(\sum_{i = t_m}^{t} g_\beta \big(\alpha(\sigma_{i-1}) \big) \Big)^{1/2}   \bigg)^2 \notag\\
    &\leq \frac{C_1}{n} + \Bigg( \frac{2}{\sqrt{n}}  
    +  \bigg[ \Big(\big(\frac{\beta}{q}\big)^2 + \delta\rho\Big)\frac{1}{n} \sum_{i = t_m}^t \mathrm{Var}\big( \alpha(\sigma_{i-1})\big) + 2qn^4 e^{-c_1\sqrt{n}}\bigg]^{1/2}  \Bigg)^2 \notag \\
    &\leq \frac{C_1}{n} + \Bigg( \frac{2}{\sqrt{n}} + \sqrt{2qn^4 e^{-c_1\sqrt{n}}}  
    +  \bigg[ \Big(\big(\frac{\beta}{q}\big)^2 + \delta\rho\Big)\frac{1}{n} \sum_{i = t_m}^t \mathrm{Var}\big( \alpha(\sigma_{i-1})\big) \bigg]^{1/2}  \Bigg)^2 \notag\\
    &\leq \frac{C_1}{n} + \Bigg( \frac{4}{\sqrt{n}}  
    +  \bigg[ \Big(\big(\frac{\beta}{q}\big)^2 + \delta\rho\Big)\frac{1}{n} \sum_{i = t_m}^t \mathrm{Var}\big( \alpha(\sigma_{i-1})\big) \bigg]^{1/2}  \Bigg)^2,
    \label{eqn:spinFraction:variance:finalFormula}
\end{align}
where in the second last inequality, we use the fact that $\sqrt{a+b} \leq \sqrt{a} + \sqrt{b}$ for $a, b \in \mathbb{R}_+$. We want to prove that there exists $C_2 > 0$ such that $\mathrm{Var}\big(\alpha(\sigma_t)\big) \leq C_2/n$ for all $0 \leq t < n\sqrt{n}$ using induction on $t$. The base case trivially holds since $\mathrm{Var}\big(\alpha(\sigma_0)\big) = 0$. We assume that our claim holds for all $t < \ell$ where $\ell$ is a positive integer and prove it for $t = \ell$. For small enough $\rho$, we have $(\beta/q)^2 + \delta\rho < 1$. For ease of notation, let $\zeta = (\beta/q)^2 + \delta\rho$. By the induction hypothesis
\begin{align*}
    \frac{1}{n} \sum_{i = t_m(\ell)}^\ell \mathrm{Var}\big( \alpha(\sigma_{i-1})\big) \leq \frac{C_2}{n}.
\end{align*}
Plugging this inequality into \eqref{eqn:spinFraction:variance:finalFormula}, we obtain
\begin{align*}
    \mathrm{Var}\big(\alpha(\sigma_t)\big) 
    &\leq \frac{C_1}{n} + \Bigg( \frac{4}{\sqrt{n}}  
    +  \frac{\sqrt {\zeta C_2}}{\sqrt{n}}  \Bigg)^2
    = \frac{1}{n} \Big( C_1 + \big(4 + \sqrt{\zeta C_2}\big)^2 \Big).
\end{align*}
Therefore, it suffices to show that
$C_1 + \big(4 + \sqrt{\zeta C_2}\big)^2 \leq C_2$
which is equivalent to
$\sqrt{C_2} \big(\sqrt{C_2}(1-\zeta) - 8\sqrt \zeta\big) \geq C_1 + 16.$
For $0 < \zeta < 1$ and $C_2 > 64\zeta/(1-\zeta)^2$, the left hand side of the second inequality tends to $\infty$ as $C_2 \to \infty$. Since $C_1$ is a fixed constant, the above inequality holds for sufficiently large $C_2$ which proves the lemma.
\end{proof}

\subsection{Bounds on the function \texorpdfstring{$g$}{g} }
\label{subsec:aux:drfit}

We provide the proof of Lemma~\ref{lemma:g:taylor}; the argument is mostly based on straightforward Taylor-expansion estimates.

\begin{proof}[Proof of Lemma~\ref{lemma:g:taylor}]
We use Taylor expansion to expand $g_\beta^\ell$ around $\hat{e}$ as
\begin{align}
    g_\beta^\ell(s) = g_\beta^\ell(\hat{e}) + \Big(\nabla g_\beta^\ell(\hat{e})\Big)^{\top}(s - \hat{e}) + R_1(s), \label{eqn:spinFraction:Taylorg_beta^l} 
\end{align}
where $R_1(s)$ is the integral form of remainder. Formally, we write
\begin{align}
    R_1(s) = \int_{0}^{1} (1-t) (s-\hat e)^\top \nabla^2f\big(\hat e + t(s-\hat e)\big) (s-\hat e) dt. \label{eqn:spinFraction:formOfRemainder}
\end{align}
From \eqref{eqn:gBetahGradientCalc}, we rewrite the gradient of $g_\beta^{\ell}$ for any $i \in Q$ and $x \in \mathcal{S}$ as
\begin{align*}
\Big(\nabla g_\beta^\ell(x)\Big)_i = \frac{\partial}{\partial x_i} g_\beta^\ell(x) 
        = \beta  g_\beta^\ell (x)\left( \mathbf{1}(i = \ell)  - g_\beta^i(x)  \right).
\end{align*}
Since $g_\beta$ is bounded, the above equation implies that $\big\vert\frac{\partial}{\partial x_i} g_\beta^\ell(x) \big\vert \leq \beta$. Now, we find the element in the $i$-th row and the $j$-th column of the Hessian matrix of $g_\beta^{\ell}$:
\begin{align*}
    \Big(\nabla^2 g_\beta^\ell(x)\Big)_{i,j}
    = \Big(\nabla\big(\nabla g_\beta^\ell(x) \big)_i\Big)_j 
    &= \beta \frac{\partial}{\partial x_j} \Big\{  g_\beta^\ell (x)\left( \mathbf{1}(i = \ell)  - g_\beta^i(x)  \right) \Big\} \notag\\
    &= \beta  \Big\{ \mathbf{1}(i = \ell) \frac{\partial}{\partial x_j}g_\beta^\ell (x)   - \frac{\partial}{\partial x_j} \big(g_\beta^\ell (x)g_\beta^i(x)\big) \Big\} \notag\\
    &= \beta  \Big\{ \mathbf{1}(i = \ell) \frac{\partial}{\partial x_j}g_\beta^\ell (x) - g_\beta^\ell (x)\frac{\partial}{\partial x_j} g_\beta^i(x) - g_\beta^i(x) \frac{\partial}{\partial x_j} g_\beta^\ell (x) \Big\}.
\end{align*}
Using the bound of the first order partial derivative, the above equation implies that that $\Big\vert\Big(\nabla^2 g_\beta^\ell(x)\Big)_{i,j}\Big\vert \leq 3\beta^2$. Therefore, \eqref{eqn:spinFraction:formOfRemainder} implies that the remainder, $\lvert R_1(s) \rvert  \leq 3q\beta^2 \lVert s - \hat{e}\rVert_2^2$.
We now simplify the second term of the expansion \eqref{eqn:spinFraction:Taylorg_beta^l},
\begin{align*}
    \Big(\nabla g_\beta^\ell(\hat{e})\Big)^{\top}(s - \hat{e}) 
    &= \beta  g_\beta^\ell(\hat{e}) \sum_{i = 1}^q \left( \mathbf{1}(i = \ell)  - g_\beta^i(\hat{e})  \right) \Big(s_i- \frac{1}{q}\Big) 
    = \frac{\beta}{q} \sum_{i = 1}^q \left( \mathbf{1}(i = \ell)  - \frac{1}{q}  \right) \Big(s_i- \frac{1}{q}\Big) \\
    &= \frac{\beta}{q} \Big(s_\ell-\frac{1}{q}\Big) - \frac{\beta}{q^2} \sum_{i = 1}^q \Big(s_i- \frac{1}{q}\Big)
    =\frac{\beta}{q} \Big(s_\ell-\frac{1}{q}\Big).
\end{align*}
Finally, from \eqref{eqn:spinFraction:Taylorg_beta^l}, we write the expansion $g_\beta^\ell$ around $\hat{e}$ as,
\begin{align*}
    g_\beta^\ell(s) = \frac{1}{q} + \frac{\beta}{q} \Big(s_\ell - \frac{1}{q}\Big) + R_1(s),
\end{align*}
which proves our claim \eqref{lemma:g:taylor:Taylorg_beta^lFinal}.

To establish part~\eqref{lemma:g:taylor:gDiffSingleCoordFinal}, we use the Taylor expansion to expand $g_\beta^\ell$ around $\tilde{s}$ as
\begin{align}
  g_\beta^\ell(s) - g_\beta^\ell(\tilde{s})= \Big(\nabla g_\beta^\ell(\tilde{s})\Big)^{\top}(s - \tilde{s}) + R_1^\prime(s),
    \label{eqn:spinFraction:TaylorTwos}
\end{align}
where $R_1^\prime(s)$ is the integral form of the remainder as in \eqref{eqn:spinFraction:formOfRemainder}. Using the Taylor expansion around $\hat{e}$ to estimate the gradient of $g_\beta^\ell$,
\begin{align*}
    \nabla g_\beta^\ell(\tilde{s}) = \nabla g_\beta^\ell(\hat{e}) + R_0(\tilde{s}) = \hat{e} + R_0(\tilde{s}),
\end{align*}
where $R_0(\tilde{s})$ is the integral form of remainder. 
Since the Hessian is finite, the remainder can be bounded as $\lVert R_0(\tilde{s})\rVert_1 = O(\lVert \tilde{s} - \hat{e} \rVert_1)$.To bound the other term in \eqref{eqn:spinFraction:TaylorTwos}, note that
\begin{align*}
    \Big(\nabla g_\beta^\ell(\tilde{s})\Big)^{\top}(s - \tilde{s}) 
    = \Big(\nabla g_\beta^\ell(\hat{e})\Big)^{\top}(s - \tilde{s}) + \big( R_0(\tilde{s})\big)^{\top}(s - \tilde{s}) = \frac{\beta}{q} (s_\ell - \tilde{s}_\ell) + O(\lVert \tilde{s} - \hat{e} \rVert_1)\lVert s - \tilde{s}\rVert_1.
\end{align*}
Plugging this and the remainder bound into \eqref{eqn:spinFraction:TaylorTwos}, we obtain
\begin{align*}
    g_\beta^\ell(s) - g_\beta^\ell(\tilde{s}) &= \frac{\beta}{q} (s_\ell - \tilde{s}_\ell) + O(\lVert \tilde{s} - \hat{e} \rVert_1)\lVert s - \tilde{s}\rVert_1 + O(\lVert s - \tilde{s} \rVert_1^2) \\
    &= \frac{\beta}{q} (s_\ell - \tilde{s}_\ell) +  \lVert s - \tilde{s} \rVert_1 \Big\{ O\big(\lVert \tilde{s} - \hat{e} \rVert_1 + \lVert s - \tilde{s} \rVert_1\big) \Big\} \\
     &= \frac{\beta}{q} (s_\ell - \tilde{s}_\ell) +  \lVert s - \tilde{s} \rVert_1 \Big\{ O\big(\lVert s - \hat{e} \rVert_1 + \lVert \tilde{s} - \hat{e} \rVert_1\big) \Big\}.
\end{align*}
From this and the reverse triangle inequality, it follows that
\begin{align}
   \Big\lvert  \big\lvert g_\beta^\ell(s) - g_\beta^\ell(\tilde{s}) \big\rvert - \big\lvert\frac{\beta}{q} (s_\ell - \tilde{s}_\ell)\big\vert \Big\rvert
   &\leq \Big\lvert   \big(g_\beta^\ell(s) - g_\beta^\ell(\tilde{s})\big)  - \frac{\beta}{q} (s_\ell - \tilde{s}_\ell) \Big\rvert \notag\\
   &= \lVert s - \tilde{s} \rVert_1 \Big\{ O\big(\lVert s - \hat{e} \rVert_1 + \lVert \tilde{s} - \hat{e} \rVert_1\big) \Big\} \notag  ,
\end{align}
which implies that
$$
 \big\lvert g_\beta^\ell(s) - g_\beta^\ell(\tilde{s}) \big\lvert 
    = \frac{\beta}{q} \big\lvert s_\ell - \tilde{s}_\ell \big \rvert +  \lVert s - \tilde{s} \rVert_1 \Big\{ O\big(\lVert s - \hat{e} \rVert_1 + \lVert \tilde{s} - \hat{e} \rVert_1\big) \Big\}. \label{eqn:spinFraction:gDiffSingleCoord}
$$
Summing over all coordinates $\ell \in Q$ on both sides proves part \eqref{lemma:g:taylor:gDiffSingleCoordFinal}.

By squaring on both sides of \eqref{eqn:spinFraction:gDiffSingleCoord}, we obtain
\begin{align*}
    \big\lvert g_\beta^\ell(s) - g_\beta^\ell(\tilde{s}) \big\lvert^2 
    &= \Big(\frac{\beta}{q}\Big)^2 \big\lvert s_\ell - \tilde{s}_\ell \big \rvert^2 +  \big\lvert s_\ell - \tilde{s}_\ell \big \rvert\lVert s - \tilde{s} \rVert_1 \Big\{ O\big(\lVert s - \hat{e} \rVert_1 + \lVert \tilde{s} - \hat{e} \rVert_1\big) \Big\}. 
\end{align*}
Summing over all coordinates $\ell \in Q$ on both sides and using the fact $\lVert s - \tilde{s} \rVert_1 = O(\lVert s - \tilde{s} \rVert_2)$ prove part \eqref{lemma:g:taylor:gDiffSquaredFinal} of the lemma.
\end{proof}

\section{Recursion}\label{sec:recur}
In this section, we prove Lemma~\ref{lem:recur:upperBoundVector}. For integer $n \ge 1$, we define $\gamma_n \in (0, 1)$ as the unique positive root of 
\begin{align}
    \frac{\beta}{nq} \sum_{i = 1}^n \gamma_n^{-i} = 1 \label{eqn:recur:gammaMain}.
\end{align}
The left hand side of this equation is continuously decreasing in $\gamma_n \in (0, \infty)$, 
and it is $\frac{\beta}{q}$ when evaluated at $\gamma_n = 1$. For $\gamma_n = \beta/q$, the left hand side is
\begin{align*}
\frac{\beta}{nq} \sum_{i = 1}^{n} \Big(\frac{q}{\beta}\Big)^{i} 
= \frac{1}{n} \sum_{i = 1}^{n} \Big(1 + \frac{q}{\beta} + \dots + \frac{q^{n-1}}{\beta^{n-1}}\Big) > 1,
\end{align*}
where the last inequality follows from $\frac{\beta}{q} < 1$. There is indeed a unique positive solution for~\eqref{eqn:recur:gammaMain} in $(\beta/q, 1)$. 

We will show that the unique positive solution of~\eqref{eqn:recur:gammaMain} will be asymptotically approximated by $1 -b/n$ where $b$ is the positive real root of the following equation
\begin{align}
    \frac{\beta(1 - e^{-b})}{qb} - e^{-b} = 0 \label{recur:b:mainEqn},
\end{align}
while $b$ does not have an elementary closed form, it can be expressed as
\begin{align*}
    b &= -\frac{\beta}{q} - W_{-1} \left(-\frac{\beta}{q} \exp\left(- \beta/q\right)\right),
\end{align*}
where $W_{-1}$ is the negative branch of the Lambert $W$ function. Formally, for real numbers $x$ and $y$, the equation $ye^y = x$ has real roots for $y$ if $-\frac{1}{e} \leq x < 0$ and one of its real roots is $W_{-1}(x)$.
We prove the following approximation for the root of~\eqref{eqn:recur:gammaMain}.
\begin{lem}\label{lemma:approx}
Let $\hat \gamma_n = 1 - b/n$. Then, $|\gamma_n - \hat \gamma_n| = O(n^{-2})$ and for every $t \in [0, \dots, n\sqrt n]$,
\begin{align}
    \frac{e^{-bt/n}}{2} \leq\gamma_n^t \leq 2 e^{-bt/n} \label{eqn:approx:prove}.
\end{align}
\end{lem}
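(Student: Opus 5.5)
The plan is to convert the defining equation \eqref{eqn:recur:gammaMain} into a closed form and then treat its left-hand side as a smooth function of a rescaled variable. Writing $\gamma_n = 1 - x/n$ and summing the geometric series, \eqref{eqn:recur:gammaMain} becomes
\begin{align*}
F_n(x) := \frac{\beta}{nq}\cdot\frac{\gamma_n^{-n}-1}{1-\gamma_n} = \frac{\beta}{qx}\Big(\big(1-\tfrac{x}{n}\big)^{-n}-1\Big) = 1 .
\end{align*}
Since $\gamma_n \in (\beta/q,1)$, we have $x>0$. Also, $F_n(x)$ is increasing in $x$, because it is decreasing in $\gamma_n$.

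\textbf{Step 1: uniform asymptotics.} Define the limit function $F(x) = \frac{\beta}{qx}(e^{x}-1)$. Then $F(x)=1$ is exactly \eqref{recur:b:mainEqn}, since $\frac{\beta(1-e^{-b})}{qb}=e^{-b}$ is equivalent to $\frac{\beta}{qb}(e^b-1)=1$. Write $(1-x/n)^{-n} = \exp\big(-n\log(1-x/n)\big) = e^{x}\exp\big(x^2/(2n)+O(x^3/n^2)\big)$. This gives $F_n(x) = F(x) + O(1/n)$ uniformly on any compact interval $[x_0,x_1]\subset(0,\infty)$.

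\textbf{Step 2: location and nondegeneracy of the root.} The function $F$ is strictly increasing on $(0,\infty)$, with $F(0^+)=\beta/q<1$ and $F(\infty)=\infty$. Hence $b$ is its unique positive root. To see the monotonicity, note that $(e^x-1)/x=\sum_{k\ge0} x^k/(k+1)!$, so $F'(b)>0$. Pick $x_0<b<x_1$ such that $F(x_0)<1-\delta$ and $F(x_1)>1+\delta$. For $n$ large, Step 1 gives $F_n(x_0)<1<F_n(x_1)$, so the root $x_n=n(1-\gamma_n)$ lies in $[x_0,x_1]$. On this interval the derivatives $F_n'$ converge uniformly to $F'$, since the same expansion can be differentiated termwise. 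So $F_n'\ge F'(b)/2$ near $b$, and the mean value theorem gives
\begin{align*}
|x_n-b| \le \frac{2|F_n(b)-F(b)|}{F'(b)} = O(1/n).
\end{align*}
It follows that $|\gamma_n-\hat\gamma_n| = |x_n-b|/n = O(n^{-2})$.

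\textbf{Step 3: the power bounds \eqref{eqn:approx:prove}.} Write $\gamma_n^t = \exp\big(t\log(1-x_n/n)\big)$, and use $\log(1-x_n/n) = -x_n/n + O(n^{-2})$ together with $x_n = b+O(1/n)$. This gives
\begin{align*}
t\log\gamma_n = -\frac{bt}{n} + O\Big(\frac{t}{n^2}\Big).
\end{align*}
For $t\le n\sqrt n$ the error is $O(n^{-1/2})$, which is below $\log 2$ for $n$ large, and \eqref{eqn:approx:prove} follows.

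The main obstacle is Step 2, specifically the claim that the error in the root is $O(1/n)$ rather than $o(1)$. This needs uniform control of $F_n'$ near $b$, or equivalently a monotonicity/sandwich argument. I would first try the sandwich: evaluate $F_n$ at $b\pm K/n$ and use $F_n(b\pm K/n) = 1 \pm KF'(b)/n + O(1/n)$ with the implicit constant independent of $K$. Then for $K$ large the two values straddle $1$, and monotonicity of $F_n$ pins $x_n$ in $[b-K/n,\,b+K/n]$. This route avoids differentiating $F_n$ altogether.
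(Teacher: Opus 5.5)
Your proposal is correct and is essentially the paper's argument in rescaled variables. The paper applies the mean value theorem to $H_n(z)=\frac{\beta}{qn}\sum_{i=1}^n z^{-i}-1$ between $\gamma_n$ and $\hat\gamma_n$. It gets the residual $H_n(\hat\gamma_n)=O(1/n)$ from $\hat\gamma_n^n=e^{-b}+O(1/n)$ together with the equation for $b$, and then exponentiates exactly as in your Step 3.

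The only difference is in the derivative bound. The paper obtains it globally and directly: every term of $H_n'(z)=-\frac{\beta}{nq}\sum_{i} i\,z^{-i-1}$ has the same sign, so $|H_n'(z)|\ge \frac{\beta^2 n}{2q^2}$ on the whole interval. In your variables this reads $F_n'(x)=\frac{\beta}{n^2q}\sum_{i=1}^n i\,(1-x/n)^{-i-1}\ge \frac{\beta}{2q}$ for all $x\in(0,n)$. Having this bound makes your localization of $x_n$ in $[x_0,x_1]$ and your derivative-convergence (or sandwich) step unnecessary.
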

\begin{proof}
We define $H_n : (\beta/q, 1) \to \mathbb{R}$ as
\begin{align*}
    H_n(z) =\frac{\beta}{qn} \sum_{i = 1}^n z^{-i} - 1  
        =  \frac{\beta}{nq z^n}  \sum_{i = 0}^{n-1} z^i - 1
        =  \frac{\beta(1-z^n)}{qn(1-z)z^n} - 1.
\end{align*}
We compute the first order derivative of the function
\begin{align}
    H_n^\prime(z) = - \frac{\beta}{nq}  \sum_{i = 1}^{n} z^{-i-1} i
    \leq - \frac{\beta z}{nq}  \sum_{i = 1}^{n}  i
    = - \frac{\beta z (n+1)}{2q} \leq -\frac{\beta^2n}{2q^2}. \label{recur:HnDerviative}
\end{align}
The following holds,
\begin{align*}
    \hat \gamma_n^n = e^{n \log \hat \gamma_n} = e^{-n \big(b/n + O(1/n^2)\big)} = e^{-b + O(1/n)}
    = e^{-b}e^{O(1/n)} = e^{-b} + O\Big(\frac{1}{n}\Big).
\end{align*}
We then compute  $H_n(\hat \gamma_n)$ as 
\begin{align*}
    H_n(\hat \gamma_n) = \frac{1}{\hat \gamma_n^n} \Big(\frac{\beta (1-\hat\gamma_n^n)}{qb} - \hat\gamma_n^n \Big) 
    = \frac{1}{\hat \gamma_n^n} \Big(\frac{\beta (1-e^{-b})}{qb} - e^{-b} \Big) + O\Big(\frac{1}{n}\Big)
    = O\Big(\frac{1}{n}\Big),
\end{align*}
where we use \eqref{recur:b:mainEqn} in the last equality. For some $\xi \in \big[\min\{\gamma_n, \hat \gamma_n\}, \max\{\gamma_n, \hat \gamma_n\}\big]$, we obtain the following by the mean value theorem
\begin{align*}
    \lvert \gamma_n - \hat \gamma_n \rvert = \frac{ \lvert H_n(\gamma_n) - H_n(\hat \gamma_n) \rvert}{ \lvert H_n^\prime(\xi) \rvert} \leq \frac{2q^2}{\beta^2}\bigg[ O\Big( \frac{1}{n^2} \Big) \bigg],
\end{align*}
where we use \eqref{recur:HnDerviative} and $H_n(\gamma_n) = 0$. This proves the claim $|\gamma_n - \hat \gamma_n| = O(n^{-2})$. We observe that for any $t \in [0, \dots, n\sqrt{n}]$,
\begin{align*}
    \gamma_n^t = e^{t \log \gamma_n} = e^{\frac{t}{n} \big(-b + O(n^{-1}) \big)} = e^{-bt/n}e^{O(tn^{-2})} = e^{-bt/n}e^{O(n^{-1/2})}.
\end{align*}
For large enough $n$, we have $1/2\leq e^{O(n^{-1/2})}\leq 2$. Hence, the claim \eqref{eqn:approx:prove} follows from the equation above.
\end{proof}
A direct consequence of Lemma~\ref{lemma:approx}
is that
\begin{align}
    \sum_{i = 0}^\infty \gamma_n^{i} = \Theta(n), \quad \text{and  } \label{eqn:recur:GammaSumConstants}\\
    \gamma_n^{j} = \Theta(1) \text{ for all } 0\leq j \leq n. \label{eqn:recur:GammaSingConstants}
\end{align}
We use $\gamma_n$ to bound the type of recurrence in the following lemma, which we later use to prove Lemma~\ref{lem:recur:upperBoundVector}.

\begin{lem}\label{lem:recur:differenceBound}
Let $\{b_t\}_{t \geq n}$ be a sequence of nonnegative real numbers and define $\{d_t\}_{t \geq 0}$ as the sequence
\begin{align*}
    d_t = \begin{cases}
0 & \text{ for } 0 \leq t< n,\\
b_t +  \frac{\beta}{nq}\sum_{i = t-n}^{t-1} d_i & \text{ for } t \geq n. \\
\end{cases} 
\end{align*}
Then, there exists a constant $C \geq 1$ such that for all $t \geq n$
\begin{align*}
    d_t \leq b_t +  \frac{C \gamma_n^t}{n}  \sum_{i = n}^{t} b_i \gamma_n^{-i}. 
\end{align*}
\begin{proof}
For $t \geq n$, we write
\begin{align*}
    d_t = \sum_{i = n}^{t} c_t(i) b_i,
\end{align*}
where $c_t(i)$ is the coefficient of $b_i$ once $d_t$ is expanded. 
Observe that $c_t(t) = 1$ and $b_\ell$ appears for the first time in the expansion of $d_\ell$. Therefore, we define $c_t(\ell)$ only when $t \geq \ell \geq n$, and thus
\begin{align}
c_t(\ell) = \begin{cases}
1 & \text{ for } t = \ell,\\
 \frac{\beta}{nq}\sum_{i = \ell}^{t-1} c_i(\ell) & \text{ for } \ell < t \leq\ell +n,\\
 \frac{\beta}{nq}\sum_{i = t-n}^{t-1} c_i(\ell) & \text{ for } \ell+n < t.\\
\end{cases} \label{eqn:recur:differenceBound:ctDefn}
\end{align}
It is therefore sufficient to prove that there exists a constant $C > 0$ such that  $c_t(\ell) \leq (C/n) \gamma_n^{t -\ell}$ for all $t> \ell \geq n$. We fix $\ell \geq n$ and we prove the claim by induction on $t$. From \eqref{eqn:recur:differenceBound:ctDefn}, we see that the claim holds for $t = \ell+1$ since $c_{\ell+1}(\ell) = \frac{\beta}{qn}$. For $t \in  \{\ell + 2 ,\dots,\ell + n\}$, 
\begin{align*}
    c_t(\ell) &= \frac{\beta}{nq}  \sum_{i = \ell}^{t-1} c_i(\ell)
        = \frac{\beta}{nq} \sum_{i = \ell}^{t-2} c_i(\ell) + \frac{\beta c_{t-1}(\ell)}{nq}   
        = c_{t-1}(\ell)+ \frac{\beta c_{t-1}(\ell)}{nq}   \\
        &= c_{t-1}(\ell) \Big(1 + \frac{\beta}{qn}   \Big) \leq c_{\ell+1}(\ell) \Big(1 + \frac{\beta}{qn}  \Big)^{n-1}  
        \leq \frac{\beta e^{\beta/q}}{nq} 
        \leq \frac{\beta e^{\beta/q} \gamma_n^{-n} \gamma_n^{t-\ell}}{nq}  .
\end{align*}
By \eqref{eqn:recur:GammaSingConstants}, there exists a constant $C_1 > 0$ such that $\gamma_n^{-n} \leq C_1$. Therefore, for $C = (\beta/q) e^{\beta/q} C_1 \geq (\beta/q) e^{\beta/q} \gamma_n^{-n}$ our claim holds for all $t$ such that $\ell  < t \leq \ell + n$ and the base case of the induction holds. Now, assume that the claim holds for all $i < t$ for some $t > \ell + n$, and then
\begin{align*}
    c_t(\ell) = \frac{\beta}{nq}  \sum_{i = t-n}^{t-1} c_i(\ell) 
        \leq \frac{C\beta}{n^2q}  \sum_{i = t-n}^{t-1}\gamma_n^{i-\ell} 
        = \frac{C \beta}{n^2 q}  \sum_{i = 0}^{n-1}\gamma_n^{i + t-n-\ell} 
        &= \frac{C \gamma_n^{t-\ell}}{n^2}  \bigg(\frac{\beta}{q} \sum_{i = 0}^{n-1}\gamma_n^{i-n} \bigg) = \frac{C\gamma_n^{t - \ell}}{n} . 
\end{align*}
where the last equality uses the definition of $\gamma_n$ in~\eqref{eqn:recur:gammaMain}. This completes the induction and hence the proof.
\end{proof}
\end{lem}

We are now ready to provide the proof of Lemma~\ref{lem:recur:upperBoundVector}.

\begin{proof}[Proof of Lemma~\ref{lem:recur:upperBoundVector}]
Fix $k \in Q$ and consider the sequence $\{y_t\}_{t \geq 0}$ defined as $y_i = x_i^{(k)}$ for $i \in \{0,\dots,n-1\}$ and 
$y_t = \frac{1}{n} \frac{\beta}{q}\sum_{i = t-n}^{t-1} y_i$ when $t \geq n$. Let
\begin{align*}
d_t = x_t^{(k)} - y_t,  
\end{align*}
for all $t \geq 0$. Observe that $d_t =  0$ for all $0\leq t < n$. For $t \geq n$, we have
\begin{align*}
    d_t = x_t^{(k)} - y_t
        &=  \frac{\beta}{nq}\sum_{i = t-n}^{t-1} \big(x_i^{(k)} - y_i\big) + \frac{C}{n}  \sum_{i = t-n}^{t-1} \lVert x_i\rVert_2^2 + \frac{C}{n}\\
        &= \frac{\beta}{nq}\sum_{i = t-n}^{t-1} d_i + \frac{C}{n}  \sum_{i = t-n}^{t-1} \lVert x_i\rVert_2^2 + \frac{C}{n}
        = b_t + \frac{\beta}{nq}\sum_{i = t-n}^{t-1} d_i,
\end{align*}
where we let $b_t = \frac{C}{n} \sum_{i = t-n}^{t-1} \lVert x_i\rVert_2^2 + \frac{C}{n}$ for all $t \ge n$. From Lemma \ref{lem:recur:differenceBound}, there exists $C_1 > 1$ such that the following holds for all $t \geq n$
\begin{align*}
d_t &\leq b_t + \frac{C_1 \gamma_n^t}{n}   \sum_{i = n}^t\gamma_n^{-i} b_i.
\end{align*}
For convenience, we choose the constant $C_1$ large enough such that also $\gamma_n^{-n} \leq C_1$ and
\begin{align}
\sum_{i =  0}^\infty \gamma_n^i &\leq C_1n; \label{eqn:upperBoundVector:C_1DefSum}
\end{align}
see \eqref{eqn:recur:GammaSingConstants} and \eqref{eqn:recur:GammaSumConstants}. We expand $d_t$ as follows
\begin{align*}
    d_t &\leq b_t + \frac{C_1 \gamma_n^t}{n} \sum_{i = n}^t\gamma_n^{-i} b_i 
    = \frac{C}{n} \sum_{i = t-n}^{t-1} \lVert x_i\rVert_2^2  + \frac{C}{n} + \frac{CC_1\gamma_n^t}{n^2}   \sum_{i = n}^t \gamma_n^{-i} \sum_{j = i-n}^{i-1} \lVert x_j\rVert_2^2
    + \frac{CC_1}{n^2}  \sum_{i = n}^t \gamma_n^{t-i} .
\end{align*}
From \eqref{eqn:upperBoundVector:C_1DefSum}, we see $\sum_{i = n}^t\gamma_n^{t-i} \leq C_1n$ and plugging this in the above bound, we obtain
\begin{align}
    d_t 
    &\leq \frac{C}{n} \sum_{i = t-n}^{t-1} \lVert x_i\rVert_2^2  + \frac{C}{n} + \frac{CC_1\gamma_n^t}{n^2}  \sum_{i = n}^t \gamma_n^{-i} \sum_{j = i-n}^{i-1} \lVert x_j\rVert_2^2
    + \frac{CC_1^2}{n}   \notag\\
    &\leq  \frac{2CC_1^2}{n} + \frac{C}{n} \sum_{i = t-n}^{t-1} \lVert x_i\rVert_2^2 + \frac{CC_1\gamma_n^t}{n^2} \sum_{i = n}^t \gamma_n^{-i} \sum_{j = i-n}^{i-1} \lVert x_j\rVert_2^2\label{eqn:recur:upperLowerBound:dtDoubleSum}
\end{align}
where we use the fact that $C+ CC_1^2 \leq 2CC_1^2$. Observe also that
\begin{align}
 \sum_{i = n}^t \gamma_n^{-i} \sum_{j = i-n}^{i-1} \lVert x_j \rVert_2^2
&= 
 \sum_{j = 0}^{n-1}\gamma_n^{j-n}\sum_{i = n}^t \gamma_n^{-(i+j-n)}  \lVert x_{i+j-n} \rVert_2^2 \notag\\
&\leq \gamma_n^{-n}\sum_{j = 0}^{n-1}  \sum_{i = n}^t \gamma_n^{-(i+j-n)} \lVert x_{i+j-n} \rVert_2^2 
\leq nC_1 \sum_{i = 0}^{t-1} \gamma_n^{-i}  \lVert x_{i} \rVert_2^2,\label{eqn:recur:upperLowerBound:DoubetoSingleSum}
\end{align}
where the last inequality follows from $\gamma_n^{-n} \leq C_1$. Plugging \eqref{eqn:recur:upperLowerBound:DoubetoSingleSum} into \eqref{eqn:recur:upperLowerBound:dtDoubleSum}, we obtain
\begin{align}
    d_t &\leq  \frac{2CC_1^2}{n} + \frac{C}{n} \sum_{i = t-n}^{t-1} \lVert x_i\rVert_2^2 + \frac{CC_1^2 }{n}  \gamma_n^t \sum_{i = 0}^{t-1} \gamma_n^{-i}  \lVert x_{i} \rVert_2^2\notag\\
        &\leq  \frac{2CC_1^2}{n} + \frac{C \gamma_n^{-n}}{n} \sum_{i = t-n}^{t-1} \gamma_n^{t-i} \lVert x_i\rVert_2^2 + \frac{CC_1 ^2}{n}  \gamma_n^t \sum_{i = 0}^{t-1} \gamma_n^{-i}  \lVert x_{i} \rVert_2^2\notag\\
        &\leq \frac{2 CC_1^2}{n} +  \frac{C(C_1+1) C_1}{n}  \gamma_n^t \sum_{i = 0}^{t-1} \gamma_n^{-i}  \lVert x_{i} \rVert_2^2
        \leq \frac{2 CC_1^2}{n} +  \frac{2CC_1^2}{n}  \gamma_n^t \sum_{i = 0}^{t-1} \gamma_n^{-i}  \lVert x_{i} \rVert_2^2
        . \label{eqn:recur:upperLowerBound:dtSingleSum}
\end{align}
We show next by induction on $t$ that the following holds for all $t \geq 0$
\begin{align}
    y_t \le C_1 \rho \gamma_n^t.\label{eqn:recur:upperLowerBound:ytUpperBound}
\end{align} 
For all $0\leq t < n$, we have $C_1 \rho \gamma_n^t \geq \rho \gamma_n^{-n} \gamma_n^t = \rho \gamma_n^{t - n} \geq \rho \geq y_t$. Therefore, the base case holds. We assume that the claim holds for all $t < \ell$ where $\ell\geq n$. For $t = \ell$,
\begin{align*}
    y_t = \frac{\beta}{nq}  \sum_{i = t-n}^{t-1} y_i 
        \leq \frac{\beta}{nq} \sum_{i = t-n}^{t-1} C_1\rho \gamma_n^i
        = C_1\rho \frac{\beta}{nq} \sum_{i = t-n}^{t-1}  \gamma_n^i
        = C_1\rho \gamma_n^t   \frac{\beta}{nq} \sum_{i = 0}^{n-1}  \gamma_n^{i-n}
        = C_1\rho \gamma_n^t,
\end{align*}
which proves the claim in \eqref{eqn:recur:upperLowerBound:ytUpperBound}.
Now, we show the following using induction
\begin{align}
\lVert x_t \rVert_2^2 < 4C_1^2q \rho^2 \gamma_n^{2t}  + \frac{4C_1^2q}{n} .\label{eqn:recur:upperLowerBound:xtUpperBound}
\end{align}
We obtain the following for all $t$ such that $0 \leq t < n$,
\begin{align*}
    4C_1^2q\rho^2\gamma_n^{2t} + \frac{4C_1^2q}{n} \geq 4C_1^2q\rho^2\gamma_n^{2n} 
    \geq 4\gamma_n^{-2n}q\rho^2\gamma_n^{2n} \geq 4q\rho^2 \geq \lVert x_t \rVert_2^2
\end{align*}
Therefore, the base case holds. We assume that the claim holds for all $t < \ell$ where $\ell \geq n$. For $t = \ell$, we obtain the following from \eqref{eqn:recur:upperLowerBound:ytUpperBound} and \eqref{eqn:recur:upperLowerBound:dtSingleSum},
\begin{align}
        \lvert x_t^{(k)} \rvert  \leq \lvert y_t \rvert + \lvert d_t\rvert 
    &\leq C_1\rho \gamma_n^t +  \frac{2CC_1^2 \gamma_n^t}{n}   \sum_{i = 0}^{t-1} \gamma_n^{-i}  \lVert x_{i} \rVert_2^2 + \frac{2CC_1^2}{n}. \label{eqn:recur:upperLowerBound:xtkAbs}
\end{align}
The induction hypothesis then implies that
\begin{align*}
    \gamma_n^t\sum_{i = 0}^{t-1} \gamma_n^{-i} \lVert x_i \rVert_2^2 
    &\leq \gamma_n^t\sum_{i = 0}^{t-1} \gamma_n^{-i} \Big(4C_1^2q \rho^2 \gamma_n^{2i}  + \frac{4C_1^2q}{n} \Big) 
    \leq 4C_1^2q \gamma_n^t\sum_{i = 0}^{t-1} \gamma_n^{-i} \big(\rho^2 \gamma_n^{2i}  + 1/n \big) \\
        &\leq 4C_1^2q \Big(\rho^2\gamma_n^t\sum_{i = 0}^{t-1}   \gamma_n^{i}  + \frac{1}{n}\sum_{i = 0}^{t-1}   \gamma_n^{t-i} \Big) 
        \leq 4C_1^2q  \big(\rho^2\gamma_n^tC_1 n  + C_1 \big)
\end{align*}
We then plug the above bound into \eqref{eqn:recur:upperLowerBound:xtkAbs} to obtain
\begin{align*}
    \lvert x_t^{(k)} \rvert    
    &\leq C_1\rho  \gamma_n^t +  \frac{2CC_1^2}{n}  \Big(4C_1^2q \big(\rho^2\gamma_n^tC_1 n  + C_1 \big)\Big) + \frac{2CC_1^2}{n} \\
    &\leq C_1\rho  \gamma_n^t +  8CC_1^5 \rho^2q\gamma_n^t  +   \frac{8CC_1^5q  + 2CC_1^2}{n} \notag\\
    &\leq \rho  \gamma_n^t C_1\big(1 +  8CC_1^4 \rho q \big)  +  \frac{8CC_1^5q  + 2CC_1^2}{n}
    .
\end{align*}
Let $C_2 = C_1\big(1 +  8CC_1^4 \rho q \big)$ and $C_3 = 8CC_1^5q  + 2CC_1^2$ for ease of notation. Since $k$ is arbitrary, the above inequality holds for all coordinates in $Q$, hence,
\begin{align*}
    \lVert  x_t \rVert_2^2 \leq 2q\rho^2 \gamma_n^{2t} C_2^2 + \frac{2qC_3^2}{n^2}.
\end{align*}
where we use the fact that $(w + z)^2 \leq 2w^2 + 2z^2$ for $w, z \in \mathbb{R}$. We want to show $2q\rho^2 \gamma_n^{2t} C_2^2 + \frac{2qC_3^2}{n^2} \leq 4C_1^2q \rho^2 \gamma_n^{2t}  + \frac{4C_1^2q}{n} $ by equating the coefficients of  $\gamma_n^{2t}$ and $1/n$. Comparing the coefficient of $\gamma_n^{2t}$, we show $C_2 \leq \sqrt 2 C_1$ or equivalently,
\begin{align*}
    1 +  8CC_1^4 \rho q \leq \sqrt{2}.
\end{align*}
We choose $\rho > 0$ small enough to make the above inequality true. Moreover, $2qC_3^2/n \leq 4C_1^2q$ holds trivially. 
Therefore, we proved our claim \eqref{eqn:recur:upperLowerBound:xtUpperBound}, hence choosing $\hat C = 4C_1^2 q$ proves the lemma.
\end{proof}
\section{Contraction of Hamming Distance}

Our goal in this section is to establish Lemma~\ref{lem:Coupling:ToTheRootNDistance} which ensures that \textbf{Phase 2} of our coupling succeeds with the desired $\varepsilon$ probability. We use the following lemma to upper bound expectation of Hamming distance.
\begin{lem}\label{lem:recur:upperBoundThreeSeq}
Let $\{x_t\}_{t \geq 0}$ be a sequence of positive real numbers, 
such that $x_t \leq n$ for $t < n$, and for $t \geq n$
\begin{align*}
    x_t &= \frac{\beta}{nq}\sum_{i = t-n}^{t-1} x_i + C \sum_{i = t-n}^{t-1} \gamma_n^{2i} + C.
\end{align*}
where $C > 0$ is a constant. Then, there exists a constant $\hat C > 0$ such that for all $t \geq 0$
\begin{align*}
     x_t  \leq \hat C n \gamma_n^t + \hat C.
\end{align*}
\end{lem}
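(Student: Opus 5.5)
The plan is to split $x_t$ into a homogeneous part driven by the initial values and an inhomogeneous part driven by the forcing term, in the same way as the proof of Lemma~\ref{lem:recur:upperBoundVector}. Let $y_t = x_t$ for $0\le t<n$, and for $t\ge n$ let
\[
y_t = \frac{\beta}{nq}\sum_{i=t-n}^{t-1} y_i .
\]
Set $d_t = x_t - y_t$, so that $d_t=0$ for $t<n$ and
\[
d_t = b_t + \frac{\beta}{nq}\sum_{i=t-n}^{t-1} d_i \quad\text{for } t\ge n, \qquad b_t = C\sum_{i=t-n}^{t-1}\gamma_n^{2i} + C .
\]
Since $b_t\ge 0$, Lemma~\ref{lem:recur:differenceBound} applies and gives
\[
d_t \le b_t + \frac{C_1\gamma_n^t}{n}\sum_{i=n}^t b_i\gamma_n^{-i}.
\]

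For the homogeneous part, I would repeat the induction from \eqref{eqn:recur:upperLowerBound:ytUpperBound} with $\rho$ replaced by $n$. For $t<n$ we have $y_t\le n\le C_1 n\gamma_n^{t}$, because $\gamma_n^{-n}\le C_1$. The inductive step then uses the defining identity $\frac{\beta}{nq}\sum_{i=0}^{n-1}\gamma_n^{i-n}=1$ from \eqref{eqn:recur:gammaMain}. The conclusion is $y_t\le C_1 n\gamma_n^t$ for all $t$.

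For the forcing term, note that $\gamma_n<1$ and \eqref{eqn:recur:GammaSumConstants} give
\[
\sum_{i=t-n}^{t-1}\gamma_n^{2i} = \gamma_n^{2(t-n)}\sum_{j=0}^{n-1}\gamma_n^{2j} \le C_1^2\gamma_n^{2t}\cdot C_1 n .
\]
Hence $b_t \le C' n\gamma_n^{2t} + C$, and in particular $b_t\le C'n\gamma_n^{t}+C$. Substituting into the convolution sum,
\[
\frac{C_1\gamma_n^t}{n}\sum_{i=n}^t \gamma_n^{-i}\big(C'n\gamma_n^{2i}+C\big) = C_1C'\gamma_n^t\sum_{i=n}^t\gamma_n^{i} + \frac{C_1C}{n}\sum_{i=n}^t\gamma_n^{t-i}.
\]
The first sum is at most $C_1C'\gamma_n^t\cdot C_1 n$ by \eqref{eqn:upperBoundVector:C_1DefSum}. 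The second is at most $C_1C\cdot C_1 = O(1)$ by the same geometric-sum bound. Combining the three pieces gives $x_t = y_t+d_t \le \hat C n\gamma_n^t + \hat C$.

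The main obstacle is making sure the forcing term $C\sum\gamma_n^{2i}$, which is of size $\Theta(n)$ at early times, contributes only $O(n\gamma_n^t)$ and not something like $n\gamma_n^{t}\cdot t/n$. The point to verify is that the convolution sums $\gamma_n^{2i}\gamma_n^{-i}=\gamma_n^{i}$, and this sum is geometrically summable with total $O(n)$ by \eqref{eqn:recur:GammaSumConstants}. That cancels the $1/n$ prefactor and leaves a clean $O(n)\gamma_n^t$ term with no logarithmic or polynomial loss. Everything else is a routine adaptation of the proof of Lemma~\ref{lem:recur:upperBoundVector}, and here it is simpler because the recursion is linear, so no closing induction on a quadratic term is needed.
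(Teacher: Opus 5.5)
Your proposal is correct and follows essentially the same route as the paper: you split $x_t$ into the homogeneous solution $y_t\le C_1 n\gamma_n^t$, obtained by induction from the defining identity for $\gamma_n$, and the forced part $d_t$, controlled via Lemma~\ref{lem:recur:differenceBound} with $b_t\le C'n\gamma_n^{2t}+C$. You also make explicit a point the paper leaves implicit: keeping the factor $\gamma_n^{2i}$, rather than weakening it to $\gamma_n^{i}$, is what makes the convolution sum geometric and avoids a $t\gamma_n^t$ loss.
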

\begin{proof}
Consider the sequence $\{y_t\}_{t \geq 0}$ where 
\begin{align*}
    y_t &= \frac{\beta}{nq}\sum_{i = t-n}^{t-1} y_i 
\end{align*}
when $t \geq n$ and $y_i = x_i$ otherwise.
Let $d_t = x_t - y_t$ for all $t \geq 0$. Observe that $d_t =  0$ for all $t < n$, and for $t \geq n$
\begin{align*}
    d_t &= x_t - y_t
        = \frac{\beta}{nq}\sum_{i = t-n}^{t-1} (x_i - y_i) +  C \sum_{i = t-n}^{t-1} \gamma_n^{2i} + C \\
        &= \frac{\beta}{nq}\sum_{i = t-n}^{t-1} d_i + C \sum_{i = t-n}^{t-1} \gamma_n^{2i} + C
        = b_t +  \frac{\beta}{nq}\sum_{i = t-n}^{t-1} d_i,
\end{align*}
where we let $b_t = C \sum_{i = t-n}^{t-1} \gamma_n^{2i} + C$ for all $t \geq n $. By Lemma \ref{lem:recur:differenceBound}, there exists $C_1 \geq 1$ such that for all $t \geq 0$,
\begin{align}
    d_t \leq b_t + \frac{C_1 \gamma_n^t}{n} \sum_{i = n}^t \gamma_n^{-i} b_i.
    \label{eqn:recur:upperBoundThreeSeq:dtDef}
\end{align}
We choose the constant $C_1$ large enough such that $\gamma_n^{-n} \leq C_1$ and
\begin{align}
\sum_{i =  0}^\infty \gamma_n^i &\leq C_1n, \label{eqn:upperBoundThreeSeq:C_1DefSum}
\end{align}
which follows from \eqref{eqn:recur:GammaSingConstants} and \eqref{eqn:recur:GammaSumConstants} respectively. We write $b_t$ as the following
\begin{align}
    b_t = C \sum_{i = 0}^{n-1} \gamma_n^{2(i + t - n)} + C
        = C \gamma_n^{-{2n}} \gamma_n^{2t} \sum_{i = 0}^{n-1} \gamma_n^{2i} + C \leq CC_1^2 \gamma_n^{2t}n + C \label{eqn:recur:upperBoundThreeSeq:btDef}
\end{align}
where we use $\gamma_n^{2i} \leq 1$ and $\gamma_n^{-2n} \leq C_1^2$ to get the equality. We then expand the following term of \eqref{eqn:recur:upperBoundThreeSeq:dtDef},
\begin{align*}
    \gamma_n^t\sum_{i = n}^t \gamma_n^{-i} b_i \leq \gamma_n^t\sum_{i = n}^t \gamma_n^{-i}\big( CC_1^2 \gamma_n^{2i}n + C \big)
        \leq \gamma_n^t CC_1^2 n\sum_{i = n}^t   \gamma_n^{i} + C\sum_{i = n}^t\gamma_n^{t-i}
        \leq \gamma_n^t CC_1^3 n^2 + CC_1n,
\end{align*}
where we apply \eqref{eqn:upperBoundThreeSeq:C_1DefSum} twice in the last inequality. Plugging this inequality and \eqref{eqn:recur:upperBoundThreeSeq:btDef} into \eqref{eqn:recur:upperBoundThreeSeq:dtDef}, we obtain
\begin{align*}
    d_t &\leq CC_1^2 \gamma_n^{2t}n + C + \frac{C_1}{n} \big(\gamma_n^t CC_1^3 n^2 + CC_1n\big)
        = CC_1^2 \gamma_n^{2t}n + C + C_1 \big(\gamma_n^t CC_1^3n  + CC_1\big) \\
        &\leq 2CC_1^4n \gamma_n^t + 2CC_1^2
\end{align*}
We can provide an inductive proof showing that $y_t \leq C_1 n \gamma_n^t$. Since the proof is similar to that used for \eqref{eqn:recur:upperLowerBound:ytUpperBound}, we are skipping the proof. Therefore,
\begin{align*}
    x_t = y_t + d_t \leq C_1n \gamma_n^t + 2CC_1^4 n \gamma_n^t + 2CC_1^2 = (C_1 + 2CC_1^4)n \gamma_n^t + + 2CC_1^2.
\end{align*}
Taking $\hat C = C_1 + 2CC_1^3$ yields the result.
\end{proof}
We are now ready to provide the proof of Lemma~\ref{lem:Coupling:ToTheRootNDistance}.
\begin{proof}[Proof of Lemma~\ref{lem:Coupling:ToTheRootNDistance}]
We couple two instances using the optimal single site coupling 
for two copies of the dynamics. 
Formally, we define a distribution $\nu_{\sigma, v}$ on $Q$ such that $\nu_{\sigma,v}(j) = g_\beta^j\big(\alpha(\sigma) - \frac{1}{n}e_{\sigma(v)}\big)$ where $\sigma \in \Sigma_n$,  $ j\in Q$ and $v \in V$. 
From a pair of configurations $\sigma_{t-1}$ and $\tilde{\sigma}_{t-1}$, we couple the $t$-th site update as follows:
\begin{enumerate}
    \item Let $v = \big( t \bmod n\big)+1$ be the vertex to be updated.
    \item Draw spins $J_{t}$ and $\tilde{J}_{t}$ from an optimal coupling of the distribution $\nu_{\sigma_{t-1}, v}$ and $\nu_{\tilde\sigma_{t-1}, v}$ respectively. 
    \item Set $\sigma_{t}\left( v\right) = J_{t}$ and $\tilde{\sigma}_{t}\left( v\right) = \tilde{J}_{t}$ leaving all other vertices unchanged.
\end{enumerate}
We show next that under the optimal site coupling, there exists a constant $C_1 > 0$ such that for all $t \geq n$, 
\begin{align}
    \mathbb{E}[d_H(\sigma_t, \tilde{\sigma}_t)] \leq & C_1 +\frac{\beta}{nq}\sum_{i = t-n}^{t-1}  \mathbb{E}[d_H(\sigma_i, \tilde{\sigma}_i)]  
    + C_1\sum_{i = t-n}^{t-1}  \Big(  \mathbb{E}\big[\lVert \alpha(\sigma_i) - \hat{e} \rVert_2^2 \big] +  \mathbb{E} \big[\lVert \alpha(\tilde{\sigma}_i) - \hat{e}\rVert_2^2 \big]\Big). \label{eqn:Coupling:ToTheRootNDistance:mainFormulaExp}
\end{align}
The Hamming distance between two copies after $t$ site updates can be written as the following  for $t \geq n$
\begin{align*}
    d_H(\sigma_t, \tilde{\sigma}_t) = \sum_{i = t-n+1}^t \mathbf{1}\left(J_{i} \neq \tilde{J}_i \right).
\end{align*}
We take expectation on both sides and the following holds for all $ t \geq n$,
\begin{align}
\mathbb{E}\left[d_H(\sigma_t, \tilde{\sigma}_t) \right] = \sum_{i = t-n+1}^{t} \mathbb{E} \left[ \mathbf{1}\left(J_i \neq \tilde{J}_{i} \right)  \right] 
        &= \sum_{i = t-n+1}^{t} \mathbb{E}\left[ \mathbb{E}\left[ \mathbf{1}\big(J_i \neq \tilde{J}_{i} \big) \big\vert \mathcal{F}_{i-1} \right]  \right] \notag\\
        &= \sum_{i = t-n+1}^{t} \mathbb{E} \left[ \Pr\nolimits \left[ J_i \neq \tilde{J}_{i}  \Big\vert \mathcal{F}_{i-1} \right] \right] .\label{eqn:coupling:ContractionExpectationRhoBound:2}
\end{align}
Since we are optimally coupling each co-ordinate, the following holds,
\begin{align}
\Pr\nolimits \left[ J_i \neq \tilde{J}_{i}  \mid \mathcal{F}_{i-1} \right] &= \frac{1}{2}\Big\lVert g_\beta\Big(\alpha(\sigma_{i}) - \frac{1}{n}e_{I_{i}} \Big) - g_\beta\Big(\alpha(\tilde{\sigma}_{i}) - \frac{1}{n}e_{\tilde{I}_{i}} \Big) \Big\rVert_1 \label{eqn:coupling:ContractionExpectationRhoBound:3}.
\end{align}
Using the argument in \eqref{eqn:spinFraction:1/nDifferenceUBound}, we have the following for any $I, \tilde{I} \in Q$,
\begin{align}
    \left\lVert g_\beta(s - \frac{1}{n} e_{I}) - g_\beta(\tilde{s} - \frac{1}{n} e_{\tilde{I}})\right\rVert_1 &= \left\lVert g_\beta(s) - g_\beta(\tilde{s}) \right\rVert _1 + \frac{2q^2 \beta}{n} \notag\\
       &\leq \frac{\beta}{q} \lVert s - \tilde{s} \rVert_1  + O\big( (\lVert s - \hat{e} \rVert_1 + \lVert\tilde{s} - \hat{e}\rVert_1)^2 \big) + \frac{2q^2 \beta}{n} \notag\\
       &\leq \frac{\beta}{q} \lVert s - \tilde{s} \rVert_1  + O\big( \lVert s - \hat{e} \rVert_2^2 + \lVert\tilde{s} - \hat{e}\rVert_2^2 \big) + \frac{2q^2 \beta}{n}, \label{eqn:coupling:ContractionExpectationRhoBound:1}
\end{align}
where the first inequality follows from Lemma~\ref{lemma:g:taylor}\eqref{lemma:g:taylor:gDiffSingleCoordFinal} and the fact that $\lVert s - \tilde{s}\rVert_1 \leq \lVert s - \hat{e}\rVert_1 + \lVert \tilde{s} - \hat{e}\rVert_1$. 
Since, the total variation distance computes the optimal distance of the spin fractions between two copies, this lower-bounds the fraction of the Hamming distance. Therefore, we use the identity  $\frac{1}{n} d_H(\sigma_t, \tilde{\sigma}_t) \geq \frac{1}{2} \lVert \alpha(\sigma_t)- \alpha(\tilde{\sigma}_t) \rVert_1$ and \eqref{eqn:coupling:ContractionExpectationRhoBound:1}, \eqref{eqn:coupling:ContractionExpectationRhoBound:3} to obtain the following for some constant $C_2 > 2q^2/\beta$,
\begin{align*}
    \Pr\nolimits \left[ J_i \neq \tilde{J}_{i}  \mid \mathcal{F}_{i-1} \right] 
&\leq \frac{\beta}{nq}  d_H(\sigma_i, \tilde{\sigma}_i)  + C_2\big( \lVert \alpha(\sigma_i) - \hat{e} \rVert_2^2 + \lVert \alpha(\tilde{\sigma}_i) - \hat{e}\rVert_2^2 \big) + \frac{C_2}{n}.
\end{align*}
Plugging the above equation in \eqref{eqn:coupling:ContractionExpectationRhoBound:2}, we prove our claim \eqref{eqn:Coupling:ToTheRootNDistance:mainFormulaExp}.
Using Lemma~\ref{lem:spinFraction:ExpectContraction} into \eqref{eqn:Coupling:ToTheRootNDistance:mainFormulaExp}, there exists a constant $C_3 > 0$ such that for any $t \geq n$, 
\begin{align*}
    \mathbb{E}\big[d_H(\sigma_t, \tilde{\sigma}_t) \big] 
     &\leq \frac{\beta}{nq}\sum_{i = t-n}^{t-1}  \mathbb{E}[d_H(\sigma_i, \tilde{\sigma}_i)]  +  2C_1C_3 \sum_{i = t-n}^{t-1} \gamma_n^{2i} + (2C_1C_3 + C_1).
\end{align*}
Using the Lemma \ref{lem:recur:upperBoundThreeSeq}, we get the following for some constant $C_4 > 0$
\begin{align*}
    \mathbb{E}\big[d_H(\sigma_t, \tilde{\sigma}_t) \big]  = C_4n  \gamma_n^{t} + C_4,
\end{align*}
and the result follows from Lemma~\ref{lemma:approx} and Markov's inequality.
\end{proof}
\section{Final stage one-scan coupling}

In this Section we provide the proof of Lemma~\ref{lem:coupling:FullScanCouplingProb}.
For this, let $\text{D}\left( \nu \mid \tilde{\nu} \right)$ denote the relative entropy or KL divergence between distributions $\nu$ and $\tilde{\nu}$. 
We require the following general bound on the KL divergence
between the distributions of two instances of the systematic scan Markov chain after one full-scan.
The following is a standard fact about the relative entropy between two discrete distributions. For completeness, we provide a proof of this fact in Appendix~\ref{app:ent}.
\begin{fact}\label{fact:entropy}
For vectors $s, \tilde{s} \in \mathcal{S}^\prime_q$, we have
\begin{align*}
    \sum_{i = 1}^{q} s_i \log\frac{s_i}{\tilde{s}_i} =  O(\lVert s- \tilde{s} \rVert_2^2).
\end{align*}
\end{fact}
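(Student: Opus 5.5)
Since both $s$ and $\tilde s$ lie in $\mathcal{S}^\prime_q$, every coordinate is bounded below by $m := 1/(e^\beta + q - 1) > 0$. I will use this lower bound together with the elementary inequality $\log x \le x - 1$. Because both $s$ and $\tilde s$ are probability vectors, I can write
\begin{align*}
\sum_{i=1}^q s_i \log\frac{s_i}{\tilde s_i} \le \sum_{i=1}^q s_i\Big(\frac{s_i}{\tilde s_i} - 1\Big) = \sum_{i=1}^q \frac{s_i^2 - s_i\tilde s_i}{\tilde s_i}.
\end{align*}
Next I subtract the zero quantity $\sum_i (s_i - \tilde s_i)$, which vanishes since both vectors sum to one. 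This turns each summand into
\begin{align*}
\frac{s_i^2 - s_i\tilde s_i - \tilde s_i s_i + \tilde s_i^2}{\tilde s_i} = \frac{(s_i-\tilde s_i)^2}{\tilde s_i}.
\end{align*}
The resulting bound is the $\chi^2$-divergence $\sum_i (s_i-\tilde s_i)^2/\tilde s_i$.

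I then apply the lower bound $\tilde s_i \ge m$ coordinatewise. This gives
\begin{align*}
\sum_i s_i\log\frac{s_i}{\tilde s_i} \le \big(e^\beta + q - 1\big)\,\lVert s-\tilde s\rVert_2^2.
\end{align*}
The left-hand side is nonnegative by Gibbs' inequality, so the $O(\cdot)$ statement holds with an explicit constant that depends only on $\beta$ and $q$.

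There is essentially no obstacle here. The only point needing care is that the KL divergence is bounded by the $\chi^2$-divergence, which uses $\log x \le x-1$ and the normalization $\sum_i s_i = \sum_i \tilde s_i = 1$. The lower bound defining $\mathcal{S}^\prime_q$ is what keeps the denominators uniformly bounded away from zero. If this fact is later applied to vectors in the image of $g_\beta$ taken on $\tilde{\mathcal S}_q$, I would note that $g_\beta$ always outputs probability vectors whose coordinates are at least $1/(e^\beta+q-1)$, so the same argument applies.
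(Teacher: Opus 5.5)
Your proof is correct, but it takes a different route from the paper. The paper proves the fact by a second-order Taylor expansion of $f(s)=\sum_i s_i\log(s_i/\tilde s_i)$ around $\tilde s$:
\begin{itemize}
\item $f(\tilde s)=0$;
\item the gradient at $\tilde s$ is the all-ones vector, so the linear term $\mathbf{1}^\top(s-\tilde s)$ vanishes by normalization;
\item the Hessian is diagonal with entries $1/\tilde s_i=O(1)$;
\item the remainder $R_2$ is absorbed into the $O(\cdot)$.
\end{itemize}
You instead use $\log x\le x-1$ to dominate the KL divergence by the $\chi^2$-divergence $\sum_i(s_i-\tilde s_i)^2/\tilde s_i$, and then apply the coordinatewise lower bound on $\tilde s$.

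Your route has three advantages. It is shorter and fully explicit, giving the global constant $e^\beta+q-1$. It needs the lower bound only on the second argument $\tilde s$; $s$ may be any probability vector. And it requires no remainder estimate. The paper leaves the control of $R_2$ implicit: it needs the higher derivatives, which behave like powers of $1/x_i$, to stay bounded along the segment from $\tilde s$ to $s$. This is legitimate because $\mathcal{S}^\prime_q$ is convex with all coordinates at least $1/(e^\beta+q-1)$, but the paper does not spell it out.

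What the Taylor approach buys is the exact local behavior, with leading term $\frac12\sum_i(s_i-\tilde s_i)^2/\tilde s_i$. Written with the integral form of the second-order remainder, it even gives the global bound with constant $(e^\beta+q-1)/2$, half of yours, at the cost of needing both endpoints in $\mathcal{S}^\prime_q$. That factor is irrelevant for the $O(\cdot)$ statement used in the final one-scan coupling.

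Your closing remark is also correct. $g_\beta$ maps $\tilde{\mathcal S}_q$ into probability vectors with coordinates at least $1/(e^\beta+q-1)$, which is exactly what is needed when the fact is applied to $a_i(\sigma)$ and $\tilde a_i(\sigma)$.
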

We can now provide the proof of Lemma~\ref{lem:coupling:FullScanCouplingProb}. 
\begin{proof}[Proof of Lemma~\ref{lem:coupling:FullScanCouplingProb}]
We first show that there exists a constant $C = C(\beta, q) > 0$ such that for $\sigma_0,\tilde{\sigma}_0 \in \Sigma_n$, we have
\begin{align}
    \text{\textup{D}} \Big( \Pr\big[\sigma_{n} \in \cdot \mid \sigma_0 \big] \Big\vert \Pr\big[\tilde{\sigma}_{n} \in \cdot \mid \tilde{\sigma}_0 \big] \Big) \leq \frac{C}{n} \big(d_H(\sigma_0, \tilde{\sigma}_0)\big)^2 + \frac{C}{n}.  \label{eqn:coupling:FullScanCouplingProb:claim1}
\end{align}
We use $\pi$ as the probability distribution of our random variable $\sigma$, formally, $\pi(\sigma) = \Pr[\sigma_n = \sigma \mid \sigma_0 ]$ for all $\sigma \in \Sigma_n$. We now write the relative entropy between the conditional probability distributions of $\sigma_n$ and $\tilde{\sigma}_n$ as
\begin{align}
    \text{\textup{D}} \Big( \Pr\big[\sigma_{n} \in \cdot \mid \sigma_0  \big] \Big\vert \Pr\big[\tilde{\sigma}_{n} \in \cdot \mid \tilde{\sigma}_0  \big] \Big) 
    &= \sum_{\sigma \in \Sigma} \Pr[\sigma_n = \sigma\mid \sigma_0 ] \log \frac{ \Pr[\sigma_n = \sigma \mid \sigma_0 ] }{\Pr[\tilde{\sigma}_n = \sigma\mid \tilde{\sigma}_0 ]} \notag\\
    &= \mathbb{E} _{\sigma\sim \pi}\left[ \log \frac{\Pr[\sigma_n = \sigma \mid \sigma_0 ]}{\Pr[\tilde{\sigma}_n = \sigma \mid \tilde{\sigma}_0  ]}  \right ]. 
    \label{eqn:coupling:entropyVsHammingDistance:entropyInitial}
\end{align}
We use the chain rule for probability to obtain the following
\begin{align*}
    \Pr[\sigma_{n} = \sigma \mid \sigma_0 ] 
    &= \prod_{i = 1}^n\Pr\big[\sigma_n(i) = \sigma(i) \big\vert \sigma_0,\sigma_n(1) = \sigma(1), \cdots, \sigma_n(i-1) = \sigma(i-1) \big]  \notag\\
    &= \prod_{i = 1}^n \prod_{j = 1}^q \Pr\big[\sigma_n(i) = j \big\vert \sigma_0,  \sigma_n(1) = \sigma(1), \cdots, \sigma_n(i-1) = \sigma(i-1) \big]^{\mathbf{1}(\sigma(i) = j)}.
\end{align*}
For $\sigma \in \Sigma_n$, let
\begin{align*}
    a_i^{(j)}(\sigma) &= \Pr\big[\sigma_n(i) = j \big\vert \sigma_0, \sigma_n(1) = \sigma(1), \cdots, \sigma_n(i-1) = \sigma(i-1) \big],~\text{and} \\
    \tilde{a}_i^{(j)}(\sigma) &= \Pr\big[\tilde{\sigma}_n(i) = j \big\vert \tilde{\sigma}_0, \tilde{\sigma}_n(1) = \sigma(1), \cdots, \tilde{\sigma}_n(i-1) = \sigma(i-1) \big],
\end{align*}
so that 
\begin{align*}
    \Pr[\sigma_{n} = \sigma \mid \sigma_0] 
    &= \prod_{i = 1}^n \prod_{j = 1}^q a_i^{(j)}(\sigma)^{\mathbf{1}\big(\sigma(i) = j\big)}, 
    ~\text{and}\\
    \Pr[\tilde{\sigma}_{n} = \sigma \mid \tilde{\sigma}_0] 
    &= \prod_{i = 1}^n \prod_{j = 1}^q \tilde{a}_i^{(j)}(\sigma)^{\mathbf{1}\big(\sigma(i) = j\big)} .
\end{align*}
Plugging these into \eqref{eqn:coupling:entropyVsHammingDistance:entropyInitial} we get  
\begin{align*}
   \text{\textup{D}} \Big( \Pr\big[\sigma_{n} \in \cdot \mid \sigma_0  \big] \Big\vert \Pr\big[\tilde{\sigma}_{n} \in \cdot \mid \tilde{\sigma}_0  \big] \Big)
    &= \mathbb{E} _{\sigma\sim \pi}\left[ \log \prod_{i = 1}^n \prod_{j = 1}^q \left(\frac{a_i^{(j)}(\sigma)}{\tilde{a}_i^{(j)}(\sigma)}\right)^{\mathbf{1}\big(\sigma(i) = j\big)}  \right ]\notag \\
    &= \sum_{i = 1}^n\mathbb{E} _{\sigma\sim \pi}\left[ \log \prod_{j = 1}^q \left(\frac{ a_i^{(j)}(\sigma)}{\tilde{a}_i^{(j)}(\sigma) }\right)^{\mathbf{1}\big(\sigma(i) = j\big)}  \right ] \notag \\
    &= \sum_{i = 1}^n\mathbb{E} _{\sigma\sim \pi}\left[\sum_{j = 1}^q \mathbf{1}\big(\sigma(i) = j\big)\log \left(\frac{a_i^{(j)}(\sigma)}{\tilde{a}_i^{(j)}(\sigma)}\right)  \right ]. 
\end{align*}
If $\mathcal{F}_i$ denotes the sigma algebra generated by $\sigma(1), \dots, \sigma(i)$, we have
$\mathbb{E}_{\sigma\sim \pi}\big[\mathbf{1}(\sigma(i) = j) \big \vert \mathcal{F}_{i-1}  \big] = a_i^{(j)}(\sigma),$
and by the tower property we obtain
\begin{align}
    \text{\textup{D}} \Big( \Pr\big[\sigma_{n} \in \cdot \mid \sigma_0  \big] \Big\vert \Pr\big[\tilde{\sigma}_{n} \in \cdot \mid \tilde{\sigma}_0  \big] \Big)
    &= \sum_{i = 1}^n\mathbb{E} _{\sigma\sim \pi}\left[\sum_{j = 1}^q a_i^{(j)}(\sigma) \log \left(\frac{a_i^{(j)}(\sigma)}{\tilde{a}_i^{(j)}(\sigma)}\right)  \right ] \notag\\
    &= O(1)\sum_{i = 1}^n\mathbb{E} _{\sigma\sim \pi}\Big[\big\lVert a_i(\sigma) - \tilde{a}_i(\sigma) \big\rVert_2^2  \Big ],
    \label{eqn:coupling:entropyVsHammingDistance:entropyBeforeHamming}
\end{align}
where the last equality follows from
the \ref{fact:entropy}.

We observe that $a_i(\sigma) = (a_i^{(1)}(\sigma), \cdots, a_i^{(q)}(\sigma))$ is the probability distribution of $\sigma_n(i)$ conditioned on $\sigma_{i-1}$ where $\sigma_{i-1}(j) = \sigma(j)$ for all $1\leq j < i$. More specifically, the following holds for any $\sigma \in \Sigma$,
\begin{align}
    a_i(\sigma) = g_\beta\Big( \alpha(\sigma_{i-1}) - \frac{1}{n} e_{\sigma_0(i-n)} \Big),
    \label{eqn:coupling:entropyVsHammingDistance:a_iGbeta}
\end{align}
where $\sigma_j = \big(\sigma(1), \cdots, \sigma(j), \sigma_0(j+1),  \cdots, \sigma_0(n) \big)$. Similarly, for $\tilde{\sigma}_j = \big(\sigma(1), \cdots, \sigma(j), \tilde{\sigma}_0(j+1),  \cdots, \tilde{\sigma}_0(n) \big)$, we write,
\begin{align}
    \tilde{a}_i(\sigma) = g_\beta\Big( \alpha(\tilde{\sigma}_{i-1}) - \frac{1}{n} e_{\tilde{\sigma}_0(i-n)} \Big).
    \label{eqn:coupling:entropyVsHammingDistance:aTilde_iGbeta}
\end{align}
By \eqref{eqn:spinFraction:1/nDifferenceUBound}, there exist $\xi, \tilde{\xi} \in \mathbb{R}^q$ such that $g_\beta\Big( \alpha(\sigma_{i-1}) - \frac{1}{n} e_{\sigma_0(i-n)} \Big) = \xi + g_\beta\Big( \alpha(\sigma_{i-1}) \Big)$ and $g_\beta\Big( \alpha(\tilde{\sigma}_{i-1}) - \frac{1}{n} e_{\tilde\sigma_0(i-n)} \Big) = \tilde\xi + g_\beta\Big( \alpha(\tilde\sigma_{i-1}) \Big)$ where 
$\lVert\xi \rVert_1 , \lVert \tilde \xi\rVert_1 \leq q^2\beta/n$.  From \eqref{eqn:coupling:entropyVsHammingDistance:a_iGbeta} and \eqref{eqn:coupling:entropyVsHammingDistance:aTilde_iGbeta}, we write the following 
\begin{align}
    \big\lVert a_i(\sigma) - \tilde{a}_i(\sigma) \big\rVert_2 
&\leq \lVert \xi \rVert_1 + \lVert \tilde\xi \rVert_1 + \big\lVert g_\beta\big(\alpha(\sigma_{i-1})\big) - g_\beta \big(\alpha(\tilde{\sigma}_{i-1})\big) \big\rVert_1 \notag\\
&\leq \frac{2q^2 \beta}{n} + \big\lVert g_\beta\big(\alpha(\sigma_{i-1})\big) - g_\beta \big(\alpha(\tilde{\sigma}_{i-1})\big) \big\rVert_1 .
\label{eqn:coupling:entropyVsHammingDistance:a_itildea_iDiff}
\end{align}
From Lemma~\ref{lemma:g:taylor}\eqref{lemma:g:taylor:gDiffSingleCoordFinal}, there exists a constant $C_1 > 1$ such that
\begin{align*}
    \big\lVert g_\beta\big(\alpha(\sigma_{i-1})\big) - g_\beta \big(\alpha(\tilde{\sigma}_{i-1})\big) \big\rVert_1 
    &\leq C_1 \lVert \alpha(\sigma_{i-1}) - \alpha(\tilde\sigma_{i-1}) \rVert_1 \\
    &= \frac{C_1}{n} \Big\lVert\sum_{j = i}^{n}  \big(e_{\sigma_{i-1}(j)} - e_{\tilde{\sigma}_{i-1}(j)} \big) \Big\rVert_1 \notag
    \leq \frac{C_1}{n} \Big\lVert\sum_{j = i}^{n} \big(  e_{\sigma_0(j)} - e_{\tilde{\sigma}_0(j)} \big) \Big\rVert_1 \\
    &\leq  \frac{C_1}{n} \sum_{j = i}^{n} \Big\lVert e_{\sigma_0(j)} - e_{\tilde{\sigma}_0(j)} \Big\rVert_1 \notag
    = \frac{2C_1}{n} d_H(\sigma_0, \tilde{\sigma_0}).
\end{align*}
Plugging the above equation into \eqref{eqn:coupling:entropyVsHammingDistance:a_itildea_iDiff}, we obtain
\begin{align*}
    \big\lVert a_i(\sigma) - \tilde{a}_i(\sigma) \big\rVert_2 ^2
&\leq \frac{8q^4 \beta^2}{n^2} + \frac{8C_1^2}{n^2} \big(d_H(\sigma_0, \tilde{\sigma_0})\big)^2,
\end{align*}
where we use the fact that $(x + y)^2 \leq 2x^2 + 2y^2$ for all $a, b \in \mathbb{R}$. Finally, plugging the above bound into \eqref{eqn:coupling:entropyVsHammingDistance:entropyBeforeHamming}, we obtain the following for some constant $C_2 > 0$
\begin{align*}
    \text{\textup{D}} \Big( \Pr\big[\sigma_{n} \in \cdot \mid \sigma_0  \big] \Big\vert \Pr\big[\tilde{\sigma}_{n} \in \cdot \mid \tilde{\sigma}_0  \big] \Big) 
    \leq \frac{C_2}{n^2} \sum_{i = 1}^n \mathbb{E}_{\sigma \sim \pi}\left[d_H^2(\sigma_0, \tilde{\sigma}_0)   \right] 
    = \frac{C_2}{n}  d_H^2(\sigma_0, \tilde{\sigma}_0) + \frac{C_2}{n},
\end{align*}
which proves the claim \eqref{eqn:coupling:FullScanCouplingProb:claim1}.
We utilize the optimal coupling of the distribution of $\sigma_{n}, \tilde{\sigma}_{n}$ when $\sigma_0, \tilde{\sigma}_0 \in \Sigma_n$. Under this coupling, we have 
$$
    \Pr\big[\sigma_{n} \neq \tilde{\sigma}_{n} \mid \sigma_0,\tilde{\sigma}_0 \big] = \big\lVert \Pr[\sigma_{n} \in \cdot \mid \sigma_0] - \Pr[\tilde{\sigma}_{n} \in \cdot\mid \tilde{\sigma}_0 ] \big\rVert_{\textsc{tv}}.$$
The result then follows from Pinsker's inequality and \eqref{eqn:coupling:FullScanCouplingProb:claim1}.
\end{proof}

\section{Lower Bound}
\label{sec:lower}

Our goal in this section is to establish the lower bound and cutoff result in Theorem~\ref{thm:mainTheorem} and thus complete its proof. We use the following lemma to lower bound the expected distance between the proportions vector and $\hat e$. 

\begin{lem}\label{lem:recur:lowerBound}
Let $\{x_t\}_{t \geq 0}$ be a sequence of real numbers define for $t \geq n$ by
\begin{align*}
    x_t &= \frac{\beta}{nq}\sum_{i = t-n}^{t-1} x_i - \frac{C}{n} \sum_{i = t-n}^{t-1} \gamma_n^{2i} - \frac{\tilde C}{n},
\end{align*}
where $C, \tilde C > 0$ are constants. If for all $i \in \{0,\dots,n-1\}$, there exists a constant $\rho > 0$ such that
$x_i \geq \rho$, then there exists a constant $\hat C > 0$ such that
\begin{align*}
     x_t  \geq  (\rho - \hat C C) \gamma_n^t - \frac{\tilde C \hat C}{n}.
\end{align*}
\end{lem}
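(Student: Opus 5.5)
We follow the template of Lemma~\ref{lem:recur:upperBoundThreeSeq}, now with signs reversed. Define the homogeneous sequence $\{y_t\}$ by $y_i = x_i$ for $0 \le i < n$ and $y_t = \frac{\beta}{nq}\sum_{i=t-n}^{t-1} y_i$ for $t \ge n$. Set $d_t = y_t - x_t$, so that $d_t = 0$ for $t<n$ and, for $t \ge n$,
\begin{align*}
d_t = b_t + \frac{\beta}{nq}\sum_{i=t-n}^{t-1} d_i, \qquad b_t = \frac{C}{n}\sum_{i=t-n}^{t-1}\gamma_n^{2i} + \frac{\tilde C}{n} \ge 0 .
\end{align*}
Then $x_t = y_t - d_t$. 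It therefore suffices to lower bound $y_t$ and upper bound $d_t$.

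\textbf{Lower bound on $y_t$.} We show by induction that $y_t \ge \rho\gamma_n^t$. For $t<n$ this holds because $y_t = x_t \ge \rho \ge \rho\gamma_n^t$, using $\gamma_n<1$. For $t\ge n$, the definition~\eqref{eqn:recur:gammaMain} of $\gamma_n$ gives
\begin{align*}
\frac{\beta}{nq}\sum_{i=t-n}^{t-1}\rho\gamma_n^i = \rho\gamma_n^t\,\frac{\beta}{nq}\sum_{j=1}^n\gamma_n^{-j} = \rho\gamma_n^t .
\end{align*}
The induction goes through because the recursion is monotone, since its coefficients are nonnegative. This is the mirror image of~\eqref{eqn:recur:upperLowerBound:ytUpperBound}.

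\textbf{Upper bound on $d_t$.} We apply Lemma~\ref{lem:recur:differenceBound}, which gives
\begin{align*}
d_t \le b_t + \frac{C_1\gamma_n^t}{n}\sum_{i=n}^t \gamma_n^{-i}b_i .
\end{align*}
As in~\eqref{eqn:recur:upperBoundThreeSeq:btDef}, but now with the extra $1/n$ factor, we have $b_t \le C C_1^2\gamma_n^{2t} + \tilde C/n$. Using $\gamma_n^{-n}\le C_1$ and $\sum_{i\ge0}\gamma_n^i \le C_1 n$ from~\eqref{eqn:recur:GammaSumConstants}--\eqref{eqn:recur:GammaSingConstants}, we get
\begin{align*}
\frac{C_1\gamma_n^t}{n}\sum_{i=n}^t\gamma_n^{-i}b_i \le \frac{C_1\gamma_n^t}{n}\Big(CC_1^2\sum_i\gamma_n^{i} + \frac{\tilde C}{n}\sum_i \gamma_n^{-i}\Big).
\end{align*}
The first sum is at most $C_1 n$, which yields a term $CC_1^4\gamma_n^t$. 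The second term equals $\frac{C_1\tilde C}{n^2}\sum_{i=n}^t\gamma_n^{t-i} \le C_1^2\tilde C/n$. Together with $\gamma_n^{2t}\le\gamma_n^t$, this gives
\begin{align*}
d_t \le \hat C C\gamma_n^t + \hat C\tilde C/n
\end{align*}
for $\hat C = 2C_1^4$, say. Combining this with the bound on $y_t$ yields $x_t \ge (\rho-\hat C C)\gamma_n^t - \tilde C\hat C/n$.

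\textbf{Main point of care.} The key step is keeping the two error contributions separate. The $C$-term must produce a multiple of $\gamma_n^t$ with no factor of $n$. This works because it carries the $1/n$ normalization, which cancels the $n$ coming from $\sum\gamma_n^i = \Theta(n)$. The $\tilde C$-term must stay of order $1/n$. This holds since $\sum_{i\le t}\gamma_n^{t-i} = O(n)$ is multiplied by $1/n^2$. Both facts follow directly from the constants established in Section~\ref{sec:recur}.
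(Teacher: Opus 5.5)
Your proposal is correct and follows essentially the same route as the paper. Both arguments split $x_t = y_t - d_t$, prove $y_t \ge \rho\gamma_n^t$ by induction using the defining equation of $\gamma_n$, and bound $d_t$ via Lemma~\ref{lem:recur:differenceBound}; the bookkeeping is identical, with $b_t \le CC_1^2\gamma_n^{2t} + \tilde C/n$ leading to $\hat C = 2C_1^4$.
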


\begin{proof}
Consider the sequence $\{y_t\}_{t \geq 0}$ where 
\begin{align*}
    y_t &= \frac{\beta}{nq}\sum_{i = t-n}^{t-1} y_i ,
\end{align*}
when $t \geq n$ and $y_i = x_i$ otherwise.
Let $d_t =  y_t - x_t$ for all $t \geq 0$. Observe that $d_t =  0$ for all $t < n$, and for $t \geq n$
\begin{align*}
    d_t &= y_t - x_t
        = \frac{\beta}{nq}\sum_{i = t-n}^{t-1} (y_i - x_i) +  \frac{C}{n} \sum_{i = t-n}^{t-1} \gamma_n^{2i}  + \frac{\tilde C}{n} \\
        &= \frac{\beta}{nq}\sum_{i = t-n}^{t-1} d_i + \frac{C}{n} \sum_{i = t-n}^{t-1} \gamma_n^{2i} + \frac{\tilde C}{n}
        = b_t + \frac{\beta}{nq}\sum_{i = t-n}^{t-1} d_i,
\end{align*}
where we let $b_t = (C/n) \sum_{i = t-n}^{t-1} \gamma_n^{2i} + (\tilde C/n)$ for all $t \geq n $. By Lemma \ref{lem:recur:differenceBound}, there exists $C_1 \geq 1$ such that for all $t \geq n$,
\begin{align}
    d_t \leq b_t + \frac{C_1\gamma_n^t}{n} \sum_{i = n}^t \gamma_n^{-i} b_i.
    \label{eqn:recur:lowerBound:dtDef}
\end{align}
For convenience, we choose the constant $C_1$ large enough such that $\gamma_n^{-n} \leq C_1$ and
\begin{align}
\sum_{i =  0}^\infty \gamma_n^i &\leq C_1n; \label{eqn:lowerBound:C_1DefSum}
\end{align}
see \eqref{eqn:recur:GammaSingConstants} and \eqref{eqn:recur:GammaSumConstants}. Then,
\begin{align}
    b_t = \frac{C}{n} \sum_{i = 0}^{n-1} \gamma_n^{2(i + t - n)} + \frac{\tilde C}{n}
        = \frac{C}{n} \gamma_n^{-{2n}} \gamma_n^{2t} \sum_{i = 0}^{n-1} \gamma_n^{2i} + \frac{\tilde C}{n} \leq CC_1^2 \gamma_n^{2t} + \frac{\tilde C}{n} ,\label{eqn:recur:lowerBound:btDef}
\end{align}
where we used the facts that $\gamma_n^{2i} \leq 1$ and $\gamma_n^{-2n} \leq C_1^2$. We can bound the sum in~\eqref{eqn:recur:lowerBound:dtDef} as follows:
\begin{align*}
    \gamma_n^t\sum_{i = n}^t \gamma_n^{-i} b_i \leq \gamma_n^t\sum_{i = n}^t \gamma_n^{-i}\Big( CC_1^2 \gamma_n^{2i} + \frac{\tilde C}{n} \Big)
        \leq \gamma_n^t CC_1^2 \sum_{i = n}^t   \gamma_n^{i} + \frac{\tilde C}{n}\sum_{i = n}^t\gamma_n^{t-i}
        \leq \gamma_n^t CC_1^3 n + \tilde CC_1,
\end{align*}
where we apply \eqref{eqn:lowerBound:C_1DefSum} twice in the last inequality. Plugging this inequality and \eqref{eqn:recur:lowerBound:btDef} into \eqref{eqn:recur:lowerBound:dtDef}, we obtain
\begin{align*}
    d_t &\leq CC_1^2 \gamma_n^{2t} + \frac{\tilde C}{n} + \frac{C_1}{n} \big(\gamma_n^t CC_1^3 n + \tilde CC_1\big)\\
        &= CC_1^2 \gamma_n^{2t} + \frac{\tilde C}{n} + \frac{C_1}{n} \big(\gamma_n^t CC_1^3n  + \tilde CC_1\big) \\
        &\leq 2CC_1^4 \gamma_n^t + \frac{2\tilde CC_1^2}{n}.
\end{align*}
As in \eqref{eqn:recur:upperLowerBound:ytUpperBound}, 
we can show inductively that $y_t \geq \rho\gamma_n^t$. Therefore,
\begin{align*}
    x_t = y_t - d_t \geq \rho \gamma_n^t - 2CC_1^4  \gamma_n^t - \frac{2\tilde CC_1^2}{n} = (\rho - 2CC_1^4) \gamma_n^t - \frac{2\tilde CC_1^2}{n}.
\end{align*}
Taking $\hat C = 2C_1^4$ completes the proof of the lemma.
\end{proof}

The next step in the proof is to establish the lower-bound analogue of Lemma~\ref{lem:spinFraction:ExpectContraction}.
For $a \in (0,1)$, define the following set of configurations
\begin{align*}
    \tilde\Sigma_n^a &= \Big\{ \sigma \in \Sigma_n : \frac{1}{i} \sum_{j = n-i+1}^n \mathbf{1}\big(\sigma(j) = 1\big) - \frac{1}{q}   \geq a \quad \forall\,i \in \mathbb{N} \cap (n^{3/4},n] \Big\}.
\end{align*}
\begin{lem}\label{lem:lowerBound:expectation}
For all integer $t \in \{0, \dots, n\sqrt n\}$, there exists constants $\rho = \rho(\beta, q)$ and $C = C(\rho, \beta,q)$ such that for any $\sigma_0 \in \Sigma_n^{\rho} \cap \tilde\Sigma_n^{\rho/2}$, we have
\begin{align*}
    \mathbb{E}\big[\lVert \alpha(\sigma_t) - \hat e \rVert _2^2\big] \geq C\gamma_n^{2t} - \frac{C}{n}.
\end{align*}
\end{lem}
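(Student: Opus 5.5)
The plan is to mirror the proof of Lemma~\ref{lem:spinFraction:ExpectContraction}, but tracking only the first coordinate and producing a lower bound. We use Jensen's inequality, $\mathbb{E}[\lVert \alpha(\sigma_t) - \hat e \rVert_2^2] \ge (\mathbb{E}[\alpha_1(\sigma_t)] - 1/q)^2$, so it suffices to show that $z_t := \mathbb{E}[\alpha_1(\sigma_t)] - 1/q$ satisfies $z_t \ge x_t$ for a sequence $\{x_t\}$ obeying the recursion of Lemma~\ref{lem:recur:lowerBound}. That lemma gives $x_t \ge (\rho' - \hat C C')\gamma_n^t - \tilde C\hat C/n$. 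Squaring the positive part then yields $C\gamma_n^{2t} - C/n$: when $x_t\ge0$ we use $(a-b)^2 \ge a^2/2 - b^2$, and otherwise $\gamma_n^{t}=O(1/\sqrt n)$ and the bound is trivial after enlarging $C$.

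\textbf{Step 1 (lower drift identity).} Start from \eqref{lem:spinFraction:ExpectContraction:1} with $\ell=1$. By \eqref{eqn:spinFraction:1/nDifferenceUBound} and Lemma~\ref{lemma:g:taylor}\eqref{lemma:g:taylor:Taylorg_beta^lFinal},
\[
\mathbb{E}\Big[g^1_\beta\Big(\alpha(\sigma_{i-1}) - \tfrac1n u_{i-n}\Big)\Big] \ge \frac1q + \frac{\beta}{q} z_{i-1} - 3q\beta^2\,\mathbb{E}\big[\lVert \alpha(\sigma_{i-1}) - \hat e\rVert_2^2\big] - \frac{q^2\beta}{n}.
\]
Lemma~\ref{lem:spinFraction:ExpectContraction} bounds the expectation by $C_3\rho^2\gamma_n^{2(i-1)} + C_3/n$, using $\sigma_0\in\Sigma_n^\rho$. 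Summing over the window gives, for $t\ge n$,
\[
z_t \ge \frac{\beta}{nq}\sum_{i=t-n}^{t-1} z_i - \frac{C}{n}\sum_{i=t-n}^{t-1}\gamma_n^{2i} - \frac{\tilde C}{n}.
\]
Here the $\rho^2$ factor is absorbed into $C = 3q\beta^2 C_3\rho^2$, so $C$ is small relative to $\rho$ when $\rho$ is small. For $0\le t<n$, the same expansion also contains the term $\frac1n\sum_{i=t-n+1}^0 (\mathbf 1(\sigma_0(n+i)=1) - 1/q)$.

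\textbf{Step 2 (initial window).} We show $z_t \ge \rho'$ for all $0\le t<n$, with some constant $\rho'>0$ of order $\rho$. This is where $\sigma_0\in\tilde\Sigma_n^{\rho/2}$ is used: the untouched suffix has 1-density at least $1/q+\rho/2$ whenever its length exceeds $n^{3/4}$.

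We proceed by induction on $t$, as in the first two cases of the proof of Lemma~\ref{lem:spinFraction:ExpectContraction}. Suppose $z_i\ge \rho'$ for earlier $i$. Then
\[
z_t \ge \frac{(n-t)\rho}{2n} + \frac{\beta t\rho'}{nq} - O(\rho^2) - O(n^{-1/4}).
\]
This is a convex combination of $\rho/2$ and $\beta\rho'/q$. Choosing $\rho' = \min\{\rho/2,\ \rho/4\}\cdot c$ with $c$ small makes it at least $\rho'$, since $\beta/q$ is a fixed positive constant and the combination is at least $\min(\rho/2, \beta\rho'/q)$. The one subtlety is that $\beta\rho'/q<\rho'$, so the combination can fall below $\rho'$ near $t\approx n$.

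\textbf{Main obstacle.} The main obstacle is the previous point: the initial-window lower bound cannot be a constant uniform in $t<n$ in the naive way. I would handle it by not insisting on a constant. Instead, compare $z_t$ directly with the linear recursion $y_t$ driven by the explicit initial data, and take $x_t$ on $[0,n)$ to be the actual lower bound $\frac{(n-t)\rho}{2n} + \frac{\beta}{nq}\sum_{i<t} x_i - \dots$. Solving this one-window recurrence explicitly gives $x_t \ge \rho e^{-\beta/q}\cdot c$ on $[0,n)$. Even at $t=n$ the data are bounded below by a constant times $\rho$, which is what Lemma~\ref{lem:recur:lowerBound} needs, with its $\rho$ replaced by $c\rho$.

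Finally, choose $\rho$ small so that $c\rho - \hat C C \ge c\rho/2$; this is possible since $C=O(\rho^2)$. Then apply Lemma~\ref{lem:recur:lowerBound}, together with monotonicity of the comparison: the recursion is order-preserving because its coefficients are nonnegative. Conclude via Jensen's inequality as described above.
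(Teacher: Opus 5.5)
Your proof is correct and follows the paper's skeleton: the lower drift inequality from Lemma~\ref{lemma:g:taylor} and Lemma~\ref{lem:spinFraction:ExpectContraction}, an order-preserving comparison with the recursion of Lemma~\ref{lem:recur:lowerBound}, a small $\rho$ because the $\gamma_n^{2i}$ coefficient is $O(\rho^2)$, and a final squaring. Where you differ is in how you resolve the first-window obstacle.

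The paper avoids solving the first-window recurrence by an index shift. It gives the comparison sequence constant data $x_0=\dots=x_{n-1}=\rho/2$, which stands in for the untouched suffix of $\sigma_0$, and proves $\mathbb{E}[\alpha_1(\sigma_t)]-1/q\ge x_{t+n}$. For $t<n$ this shifted comparison is essentially automatic. The exact expansion counts the suffix bias, which is at least $(n-t)\rho/2$ when $n-t>n^{3/4}$, with weight $1/n$. The recursion for $x_{t+n}$ counts $x_t,\dots,x_{n-1}$ with the smaller weight $\beta/(qn)$. The homogeneous bound $y_t\ge\rho\gamma_n^t$ inside the proof of Lemma~\ref{lem:recur:lowerBound} then does the job of your explicit solution, at the cost of a factor $\gamma_n^{n}=\Theta(1)$.

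You instead solve the one-window inhomogeneous recurrence and feed its non-constant values to Lemma~\ref{lem:recur:lowerBound} as initial data. This is legitimate, since that lemma only uses $x_i\ge\rho$ on $\{0,\dots,n-1\}$. The constant you quote should be corrected, though. With $\lambda=\beta/q$, the solution driven by the suffix bias alone is $\frac{\rho}{2\lambda}\bigl(1-(1-\lambda)(1+\lambda/n)^t\bigr)\ge\frac{\rho}{2\lambda}\bigl(1-(1-\lambda)e^{\lambda}\bigr)$. This is positive precisely because $e^{-\lambda}>1-\lambda$, and it is of order $\lambda\rho$ when $\lambda$ is small, so your $c$ must depend on $\beta/q$ rather than being of the form $e^{-\beta/q}c$ with $c$ absolute. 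The $-O(\rho^2)$ and $-O(n^{-1/4})$ errors are then absorbed for $\rho$ small and $n$ large.

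Your route buys an explicit first-window bound at the price of a short computation. The paper's shift needs no computation, but it must spend slack in the $\gamma_n^{2i}$ coefficient to absorb the $n^{-1/4}$ loss for $t\in[n-n^{3/4},n)$. Your treatment of the final squaring is also more careful than the paper's last line: you separate out the case where the lower bound $x_t$ is negative, whereas the paper squares its lower bound without checking its sign.
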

\begin{proof}
We let $t_m = t_m(t) =\max\{1, t-n+1\}$. 
Similarly to the derivation from~\eqref{lem:spinFraction:ExpectContraction:4}, 
we can derive the following for all $t \in \{0, \dots, n\sqrt{n}\}$ 
\begin{align}
    \mathbb{E}\Big[\alpha_1(\sigma_t) - \frac{1}{q}\Big] 
    \geq & -\frac{C_1}{n} +  \frac{1}{n} \sum_{i = t-n+1}^0 \Big[\mathbf{1}\big(\sigma_0(n+i) = 1\big) - \frac{1}{q} \Big] \notag\\
    &+ \frac{1}{n}\sum_{i = t_m}^{t}  \frac{\beta} {q}\mathbb{E}\Big[\alpha_1(\sigma_{i-1}) - \frac{1}{q} \Big]  - \frac{C_1}{n}\sum_{i = t_m}^{t}\big\lVert \mathbb{E}[  \alpha(\sigma_{i-1}) - \hat e ] \big\rVert_2^2 , \label{eqn:lowerBound:expectationInitial}
\end{align}
where $C_1 \geq 1$ is a large enough constant. By Lemma~\ref{lem:spinFraction:ExpectContraction}, for large enough $C_1$ and for all $t \in \{0, \dots, n\sqrt{n}\}$ we have
\begin{align*}
    \big\rVert \mathbb{E}[  \alpha(\sigma_{t}) - \hat e ] \big\rVert_2^2 \leq C_1\rho^2 \gamma_n^{2t} + \frac{C_1}{n} .
\end{align*}
Moreover, we choose $C_1$ large enough such that $\gamma_n^{-2n} \leq C_1$ and the existence of such constant is justified by \eqref{eqn:recur:GammaSingConstants}. 
By plugging the above inequality into \eqref{eqn:lowerBound:expectationInitial}, we obtain
\begin{align}
    \mathbb{E}\Big[\alpha_\ell(\sigma_t) - \frac{1}{q}\Big] 
    \geq & -\frac{C_1}{n} +  \frac{1}{n} \sum_{i = t-n+1}^0 \Big[\mathbf{1}\big(\sigma_0(n+i) = \ell\big) - \frac{1}{q} \Big] \notag\\ 
    &+ \frac{1}{n}\sum_{i = t_m}^{t}  \frac{\beta} {q}\mathbb{E}\Big[\alpha_\ell(\sigma_{i-1}) - \frac{1}{q} \Big]  - \frac{C_1^2}{n}\sum_{i = t_m}^{t} \Big( \rho^2 \gamma_n^{2t} + \frac{1}{n} \Big) \notag\\
     \geq& 
     -\frac{2C_1^2}{n} 
     + \frac{1}{n} \sum_{i = t-n+1}^0 \Big[\mathbf{1}\big(\sigma_0(n+i) = \ell\big) - \frac{1}{q} \Big]\notag\\
    &+ \frac{1}{n}\sum_{i = t_m}^{t}  \frac{\beta} {q}\mathbb{E}\Big[\alpha_\ell(\sigma_{i-1}) - \frac{1}{q} \Big]  - \frac{C_1^2\rho^2}{n}\sum_{i = t_m}^{t} \gamma_n^{2(i-1)}  .\label{eqn:lowerBound:Expectation:2}
\end{align}
We consider a sequence of real numbers $\{x_t\}_{t\geq 0}$ such that for all $t \geq n$,
\begin{align}
    x_t = \frac{\beta}{qn} \sum_{i= t-n}^{t-1} x_i - \frac{2C_1^3\rho^2}{n} \sum_{i = t-n}^{t-1} \gamma_n^{2i} - \frac{2C_1^2}{n} ,\label{eqn:lowerBound:Expectation:1}
\end{align}
whereas $x_t = \frac{\rho}{2}$ for all $t \in \{0, \cdots, n-1\}$. We show that for all $t\in\{0, \dots, n\sqrt{n}\}$
\begin{align}
    \mathbb{E}\Big[\alpha_1(\sigma_t) - \frac{1}{q}\Big] \geq x_{t+n} ,\label{eqn:lowerBound:Expectation:4}
\end{align}
via induction. For $t = 0$, we obtain the following from \eqref{eqn:lowerBound:Expectation:1}
\begin{align*}
x_n = \frac{\beta \rho}{2q} - \frac{2C_1^3 \rho^2}{n} \sum_{i = t-n}^{t-1} \gamma_n^{2i} - \frac{2C_1^2}{n} \leq \frac{\rho}{2} \leq \mathbb{E}\Big[\alpha_\ell(\sigma_t) - \frac{1}{q}\Big].   
\end{align*}
Therefore, the base case of the induction holds. We consider three cases for the inductive step.
We assume first that the claim holds for all $t \in \{1, \cdots, \kappa-1\} $ for some $\kappa  \in \{1, \cdots, n-n^{3/4} - 1\}$. For $t = \kappa$, using that $\sigma_0 \in \tilde\Sigma_{n}^{\rho/2}$ and~\eqref{eqn:lowerBound:Expectation:2}
\begin{align*}
    \mathbb{E}\Big[\alpha_1(\sigma_t) - \frac{1}{q}\Big] 
      \geq& -\frac{2C_1^2}{n} 
     + \frac{(n-t)\rho}{2n} + \frac{1}{n}\sum_{i = 0}^{t-1}  \frac{\beta} {q}\mathbb{E}\Big[\alpha_1(\sigma_{i}) - \frac{1}{q} \Big]  - \frac{C_1^2\rho^2}{n}\sum_{i = 0}^{t-1} \gamma_n^{2i}.
\end{align*}
By comparing the above inequality with \eqref{eqn:lowerBound:Expectation:1}, we observe that \eqref{eqn:lowerBound:Expectation:4} holds for $t = \kappa$ if the following inequalities hold:
\begin{align}
\frac{(n-t)\rho}{2n} + \sum_{i = 0}^{t-1}  \frac{\beta} {qn}\mathbb{E}\Big[\alpha_1(\sigma_{i}) - \frac{1}{q} \Big] &\geq \frac{\beta}{qn} \sum_{i = t}^{t+n-1} x_i; \quad\text{and} \label{eqn:lowerBound:Expectation:5}\\
\frac{C_1^2 \rho^2}{n} \sum_{i = 0}^{t-1} \gamma_n^{2i} \leq \frac{2C_1^3 \rho^2}{n} \sum_{i = t}^{t+n-1} \gamma_n^{2i} . \notag
\end{align}
Since $x_i \leq \rho/2$ and $\beta/q < 1$, the inequality, $\frac{(n-t)\rho}{2n} \geq \frac{\beta}{qn} \sum_{i = t}^{ n-1} x_i$ holds. Also
$$\sum_{i = 0}^{t-1}  \frac{\beta} {q}\mathbb{E}\Big[\alpha_1(\sigma_{i}) - \frac{1}{q} \Big] \geq \frac{\beta}{q} \sum_{i = n}^{t+n-1} x_i,$$
follows from the induction hypothesis. Therefore, \eqref{eqn:lowerBound:Expectation:5} holds. Then,
\begin{align*}
    \frac{2C_1^3\rho^2}{n} \sum_{i = t}^{t+n-1} \gamma_n^{2i}
    \geq \frac{2C_1^2\rho^2 \gamma_n^{-2n}}{n} \sum_{i = 0}^{n-1} \gamma_n^{2i + 2t} \geq \frac{2C_1^2\rho^2}{n} \sum_{i = 0}^{n-1} \gamma_n^{2i} \geq \frac{C_1^2\rho^2}{n} \sum_{i = 0}^{t-1} \gamma_n^{2i} .
\end{align*}
where we use our assumption $C_1 \geq \gamma_n^{-2n}$ in the first inequality.
Thus, \eqref{eqn:lowerBound:Expectation:4} holds for all $t \in \{0, \dots, n-n^{3/4}-1\}$. 

For the second case of the inductive step, let us assume that the claim holds for all $t \in \{1, \cdots, \kappa-1\} $ for some $\kappa  \in \{n-n^{3/4}, \cdots, n - 1\}$. For $t = \kappa$, \eqref{eqn:lowerBound:Expectation:2} implies that
\begin{align*}
    \mathbb{E}\Big[\alpha_1(\sigma_t) - \frac{1}{q}\Big] 
     \geq & 
     -\frac{2C_1^2}{n} -  \frac{2(n-t)}{n} + \frac{n-t}{n} 
      + \frac{1}{n}\sum_{i = t_m}^{t}  \frac{\beta} {q}\mathbb{E}\Big[\alpha_1(\sigma_{i-1}) - \frac{1}{q} \Big]  - \frac{C_1^2\rho^2}{n}\sum_{i = t_m}^{t} \gamma_n^{2t} .
\end{align*}
where we use the trivial bound bound $\mathbf{1}\big(\sigma_0(n+i) = \ell\big) - \frac{1}{q} \geq -1$. By comparing \eqref{eqn:lowerBound:Expectation:1} and the above inequality, we see that \eqref{eqn:lowerBound:Expectation:4} holds for $t = \kappa$ holds if
\begin{align}
    &\frac{(n-t)}{n} + \sum_{i = 0}^{t-1}  \frac{\beta} {qn}\mathbb{E}\Big[\alpha_1(\sigma_{i}) - \frac{1}{qn} \Big] \geq \frac{\beta}{qn} \sum_{i = t}^{t+n-1} x_i; \quad \text{and} \label{eqn:lowerBound:Expectation:stage2:claim1}\\
    &\frac{2(n-t)}{n} +\frac{C_1^2 \rho^2}{n} \sum_{0}^{t-1} \gamma_n^{2t} \leq \frac{2C_1^3\rho^2}{n} \sum_{i = t}^{t+n-1} \gamma_n^{2i} .\notag
\end{align}
Here, $\frac{n-t}{n} \geq \frac{\beta}{qn} \sum_{i = t}^{n-1} x_i$ since $x_i \leq \rho/2$. Moreover, the induction hypothesis implies that $$\frac{\beta}{qn} \sum_{i = 0}^{t-1} \mathbb{E}\big[\alpha_1(\sigma_{i-1}) - \frac{1}{q} \big] \geq \frac{\beta}{q} \sum_{i = n}^{t+n-1} x_i.$$ Therefore, the \eqref{eqn:lowerBound:Expectation:stage2:claim1} inequality holds. Then,
\begin{align*}
    \frac{2C_1^3\rho^2}{n} \sum_{i = t}^{t+n-1} \gamma_n^{2i}
    &\geq \frac{2C_1^2\rho^2\gamma_n^{-2n}}{n} \sum_{i = 0}^{n-1} \gamma_n^{2i + 2t} \geq \frac{2C_1^2\rho^2}{n} \sum_{i = 0}^{n-1} \gamma_n^{2i} \\
    &\geq \frac{C_1^2\rho^2}{n} \sum_{i = 0}^{n-1} \gamma_n^{2i} + C_1^2\rho^2\gamma_n^{2n} \geq \frac{C_1\rho^2}{n} \sum_{i = 0}^{t-1} \gamma_n^{2i} + \frac{2}{n^{1/4}} ,
\end{align*}
considering $\frac{2(n-t)}{n} \leq \frac{2n^{3/4}}{n} = \frac{2}{n^{1/4}} \leq C_1^2\rho^2 \gamma_n^{2n}$ for large enough $n$, since \eqref{eqn:recur:GammaSingConstants} implies that $\gamma_n^{2n}$ can be lower bounded by some constant. Therefore, \eqref{eqn:lowerBound:Expectation:4} holds for all $t \in \{0, n-1\}$. 

For the third and final case, we assume that the claim holds for all $t \in \{0, \cdots, \kappa-1\}$ for some $\kappa \in \{n, \cdots, n\sqrt{n}\}$. For $t = \kappa$, \eqref{eqn:lowerBound:Expectation:2} implies that
\begin{align*}
    \mathbb{E}\Big[\alpha_1(\sigma_t) - \frac{1}{q}\Big] 
      \geq& -\frac{-2C_1^2}{n} 
      + \frac{1}{n}\sum_{i = t-n}^{t-1}  \frac{\beta} {q}\mathbb{E}\Big[\alpha_1(\sigma_{i}) - \frac{1}{q} \Big]  - \frac{C_1^2\rho^2}{n}\sum_{i = t-n}^{t-1} \gamma_n^{2i} \\
      \geq & -\frac{-2C_1^2}{n} 
      + \frac{\beta}{qn}\sum_{i = t}^{t+n-1}  x_i  - \frac{2C_1^3\rho^2}{n}\sum_{i = t}^{t+n-1} \gamma_n^{2i} = x_t.
\end{align*}
Therefore, \eqref{eqn:lowerBound:Expectation:4} holds for all $t \in \{0, \dots, n\sqrt{n}\}$. By Lemma~\ref{lem:recur:lowerBound}, the following holds for some constant $C_2 \geq 0$,
\begin{align*}
    \mathbb{E}\Big[\alpha_1(\sigma_t) - \frac{1}{q}\Big] \geq x_{t+n} \geq  \rho\Big(\frac{1}{2} - 2C_1^3C_2\rho\Big) \gamma_n^{t+n} -\frac{2C_1^2C_2 }{n}
    \geq  \frac{\rho}{\sqrt{C_1}}\Big(\frac{1}{2} - 2C_1^3C_4\rho\Big) \gamma_n^{t} -\frac{2C_1^2 C_2}{n},
\end{align*}
where the last inequality follows from $\gamma_n^n \geq 1/\sqrt{C_1}$. By choosing small enough $\rho$, we ensure $\frac{1}{2} - 2C_1^3C_4\rho \geq \frac{1}{4}$. Therefore, 
\begin{align*}
     \big \lVert\mathbb{E}\big[\alpha(\sigma_t) - \hat e\big] \big\rVert_2^2 \geq\mathbb{E}\Big[\alpha_1(\sigma_t) - \frac{1}{q}\Big]^2 \geq \frac{\rho^2}{16C_1}\gamma_n^{2t} - \frac{C_1^2C_2 \rho\gamma_n^t}{\sqrt{C_1}n} ,
\end{align*}
which proves the lemma.
\end{proof}
We use the following fact which follows from Section 4.1 of \cite{Cuff_2012}.
\begin{fact}\label{fact:lowerBoundConcentrate}
There exists a constant $C> 0$ such that the following holds for any $r > 0$
\begin{align*}
    \Pr\nolimits_{\sigma \sim \mu}\left[\lVert \alpha(\sigma) - \hat{e}\rVert_2 < \frac{r}{\sqrt{n}}\right] \geq 1 - Cr^{-2}. 
\end{align*}
\end{fact}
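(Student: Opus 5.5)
The plan is to obtain Fact~\ref{fact:lowerBoundConcentrate} from a second-moment estimate under $\mu$, namely
\[
\mathbb{E}_{\sigma\sim\mu}\big[\lVert \alpha(\sigma)-\hat e\rVert_2^2\big] \le \frac{C}{n}
\]
for a constant $C=C(\beta,q)$. Given this bound, Markov's inequality applied to $\lVert \alpha(\sigma)-\hat e\rVert_2^2$ gives $\Pr_\mu[\lVert\alpha(\sigma)-\hat e\rVert_2\ge r/\sqrt n]\le C r^{-2}$, which is exactly the statement.

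To prove the second-moment bound, I would use the mean-field structure of $\mu$. Since $\sum_{\{u,v\}}\mathbf{1}(\sigma(u)=\sigma(v))=\frac{n^2}{2}\sum_i\alpha_i^2-\frac n2$, the law of $\alpha(\sigma)$ under $\mu$ has the form
\[
\Pr_\mu[\alpha=x]\propto \binom{n}{nx_1,\dots,nx_q}\exp\Big(\tfrac{\beta n}{2}\lVert x\rVert_2^2\Big).
\]
By Stirling's formula this equals $\exp(-n f(x)+O(\log n))$, where $f(x)=\sum_i x_i\log x_i-\frac\beta2\lVert x\rVert_2^2$ is the free energy. This is the setting of Section~4.1 of~\cite{Cuff_2012}. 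The facts I would import from there are:
\begin{enumerate}[(i)]
\item for $\beta<\beta_s$, and in particular for $\beta<\beta_c$ where the free energy is uniquely minimized at $\hat e$, the point $\hat e$ is the unique global minimizer of $f$ on the simplex;
\item the Hessian of $f$ at $\hat e$, restricted to the tangent space $\{\sum_i y_i=0\}$, equals $(q-\beta)I$, which is positive definite because $\beta<q$.
\end{enumerate}

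The estimate then splits into two regions.
\begin{itemize}
\item \textbf{Away from $\hat e$.} For $\lVert x-\hat e\rVert\ge\delta$, the strict gap $f(x)-f(\hat e)\ge c_\delta>0$ makes the total mass $e^{-c_\delta n+O(\log n)}$. Its contribution to the second moment is therefore negligible.
\item \textbf{Near $\hat e$.} For $\lVert x-\hat e\rVert<\delta$, I would use the local Stirling expansion, which is uniform with $\sqrt{\prod_i x_i}$ prefactors bounded above and below. Together with $f(x)-f(\hat e)\ge c\lVert x-\hat e\rVert_2^2$ this gives a Gaussian-type bound: the weight at $x$ is at most $\exp(-cn\lVert x-\hat e\rVert_2^2)$ times the weight at $\hat e$, up to constants. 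Summing over the lattice points $x\in\frac1n\mathbb{Z}^q$ bounds $\mathbb{E}\lVert\alpha-\hat e\rVert_2^2$ by $O(1/n)$, and the normalization is handled by the lower bound $\Pr_\mu[\lVert\alpha-\hat e\rVert\le 1/\sqrt n]=\Omega(1)$.
\end{itemize}

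I expect the main obstacle to be the regime $\beta_c\le\beta<\beta_s$ for $q\ge3$. There $\hat e$ is only a local minimizer of $f$: in the low-temperature phase it is not even the global one, and the ordered minimizers carry most of the mass. In that case the Fact as literally stated cannot hold for $\mu$. My plan is to check the paper's intended regime. If the Fact is only needed for $\beta<\beta_c$ (or $q=2$), the argument above goes through verbatim. Otherwise I would replace $\mu$ by its restriction to a neighborhood of $\hat e$, apply the same local Gaussian estimate there, and use in the lower bound that the chain started near $\hat e$ stays there (Lemma~\ref{lem:BurnIn:RhoBoundedConfigAndSpinFrac}). In either case the local quadratic-growth step is routine given~\cite{Cuff_2012}.
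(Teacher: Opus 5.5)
Your argument is correct and covers the whole range $\beta<\beta_s$ the paper needs, though you did not see this yourself. The paper gives no proof here; it only points to Section~4.1 of \cite{Cuff_2012}. Your Stirling/Laplace computation is therefore a self-contained derivation of the cited fact. It writes the law of $\alpha$ as $\binom{n}{nx}e^{\beta n\lVert x\rVert_2^2/2}$, gets exponential suppression away from $\hat e$ and a Gaussian bound near it from the Hessian $(q-\beta)I$, and finishes with Markov's inequality on $\lVert\alpha-\hat e\rVert_2^2$. One small point: $\Pr_\mu[\lVert\alpha-\hat e\rVert_2\le 1/\sqrt n]=\Omega(1)$ should not be an input. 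It comes out of the same computation, by comparing the unnormalized lattice sums in numerator and denominator directly.

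What you must correct is the ordering of the thresholds, which makes your anticipated obstacle illusory. For $q\ge3$ one has $\beta_s<\beta_c<q$, and for $q=2$ one has $\beta_s=\beta_c=2$. The regime $\beta_c\le\beta<\beta_s$ is therefore empty.

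You can read off $\beta_s\le\beta_c$ from the paper's definition of $\beta_s$. At $\beta_c$ the ordered minimizer $(x^*,\frac{1-x^*}{q-1},\dots,\frac{1-x^*}{q-1})$ with $x^*>1/q$ is an interior critical point of $f$. It is thus a fixed point of $g_{\beta_c}$, so $G_{\beta_c}(x^*)=x^*$. Since $G_\beta(x)$ is increasing in $\beta$ for $x>1/q$ and $G_\beta(1)<1$, every $\beta\ge\beta_c$ has a zero of $G_\beta(x)-x$ in $(1/q,1)$. Hence $\beta_s\le\beta_c$.

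In the metastable window $(\beta_s,\beta_c)$ it is the \emph{ordered} states that are mere local minima, while $\hat e$ is still the unique global minimizer. That is why the Glauber dynamics is slow there even though $\mu$ concentrates at $\hat e$.

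So your item (i) holds on all of $\beta<\beta_s$; the phrase ``in particular for $\beta<\beta_c$'' has the inclusion backwards. Your claim that the Fact ``cannot hold'' on part of the paper's regime is false. Drop the fallback of restricting $\mu$ to a neighborhood of $\hat e$. It would in any case prove a statement about a different measure, not the $\mu$ that the lower-bound argument compares $P^t(\sigma_0,\cdot)$ against.
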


We are now ready to provide the proof of the mixing time lower bound and cutoff result from Theorem~\ref{thm:mainTheorem}. 

\begin{proof}[Proof of Theorem~\ref{thm:mainTheorem} (Lower bound and cutoff)]
To establish the lower bound, we show that for any $\varepsilon \in (0, 1)$, there exists $\kappa = \kappa(\varepsilon) > 0$ independent of $n$ such that $T_{\text{mix}} (\varepsilon) \geq c(\beta, q) \log(n) - \kappa$.
Using Lemma~\ref{lem:lowerBound:expectation}, we know that the following holds for some constant $C_1 > 0$ 
\begin{align*}
    \Big\lVert\mathbb{E}\big[\alpha(\sigma_t) - \hat{e}\big]\Big\rVert_2^2  &\geq C_1 \gamma_n^{2t} - \frac{C_1}{n}  .\\
\end{align*}
For ease of notation, let $T_n^c = c(\beta, q) \log(n)$. Iterating, we obtain that for sufficiently large $n$
\begin{align*}
     \big\lVert \mathbb{E}\big[\alpha(\sigma_{(T_n^c - \kappa)n}) - \hat{e}\big]\big\rVert_2^2  
    &\geq C_1 \gamma_n^{2(T_n^c - \kappa)n} - \frac{C_1}{n} 
    = \frac{C_1 e^{-2b(T_n^c - \kappa)}}{2} - \frac{C_1}{n} \\
    &= \frac{C_1e^{2b\kappa} }{2e^{2 c(\beta, q) b \log(n)}} - \frac{C_1}{n} 
    \geq \frac{C_1e^{2b\kappa}}{4n} .
\end{align*}
where we use Lemma~\ref{lemma:approx} in the first equality. 
By the convexity of $\lVert \cdot \rVert_2$ and Jensen's inequality
\begin{align*}
    \mathbb{E}\Big[\big\lVert\alpha(\sigma_{(T_n^c - \kappa)n}) - \hat{e}\big\rVert_2\Big]   
    \geq \Big\lVert \mathbb{E}\big[\alpha(\sigma_{(T_n^c - \kappa)n}) - \hat{e}\big] \Big\rVert_2 
    \geq \frac{\sqrt{C_1}e^{b\kappa}}{2\sqrt{n}}.
\end{align*}
Applying Chebyshev's inequality and using the Lemma \ref{lem:spinFraction:variance}, we obtain for any $r > 0$ and a constant $C_2 > 0$,
\begin{align*}
    \Pr\left[\left \lVert\alpha(\sigma_{(T_n^c - \kappa)n}) - \hat{e}\right\rVert_2 < \frac{r}{\sqrt{n}}\right] \leq
        \frac{\mathrm{Var} \left(
                \alpha(\sigma_{(T_n^c - \kappa)n})
        \right)
        }{
            \left( \frac{e^{b\kappa}\sqrt{C_1/4}}{\sqrt{n}}  - \frac{r}{\sqrt{n}} \right)^2
        }
        \leq C_2\big(e^{b\kappa} \sqrt{C_1/4} - r\big)^{-2}.
\end{align*}
This inequality and Fact~\ref{fact:lowerBoundConcentrate} imply that the following holds for some constant $C_3 > 0$,
\begin{align*}
    \big\lVert P^{T_n^c - \kappa}(\sigma_0, \cdot) - \mu \big\rVert_\textsc{tv} &\geq \Pr\nolimits_{\sigma \sim \mu}\left[\lVert \alpha(\sigma) - \hat{e}\rVert_2 < \frac{r}{\sqrt{n}}\right] - \Pr\left[\left \lVert\alpha(\sigma_{(T_n^c - \kappa)n}) - \hat{e}\right\rVert_2 < \frac{r}{\sqrt{n}}\right] \\
    & \geq 1 - C_3r^{-2} - C_2\big(e^{b\kappa}\sqrt{C_1/4} - r\big)^{-2}.
\end{align*}
For any $\varepsilon > 0$, we can choose $r, \kappa$ large enough such that $\big\lVert P^{T_n^c - \kappa}(\sigma_0, \cdot) - \mu \big\rVert_\textsc{tv} \geq \varepsilon$ which establishes
the claimed lower bound for $T_{\text{mix}} (\varepsilon)$.
Together with the upper bound, we have that for any $\varepsilon$, there exists a $\kappa(\varepsilon)$ such that \eqref{eq:co} holds 
which implies the chain exhibits cutoff at mixing time $T_n^c$ with cutoff window $O(1)$ as claimed.
\end{proof}

\bibliographystyle{plain}
\bibliography{references}

\appendix

\section{Relative entropy bound}

\label{app:ent}
\begin{proof}[Proof of Fact~\ref{fact:entropy}]
We can verify the claim above by using Taylor expansion for a function $f : \mathcal{S}_q^\prime \to \mathbb{R}$, defined as
\begin{align*}
    f(s) = \sum_{i = 1}^q s_i \log \frac{s_i}{\tilde{s}_i}
\end{align*}
where we expand the function around $\tilde{s}$. We compute the gradient $\nabla f$ so that the following holds for all $i \in Q$ and $x \in \mathcal{S}^\prime_q$,
\begin{align*}
    \big( \nabla f(x)\big)_i = \frac{\partial}{\partial x_i} f(x)
        = \log\Big(\frac{x_i}{\tilde{s}_i}\Big) + x_i \frac{\tilde{s}_i}{x_i} \frac{1}{\tilde{s}_i} = \log\Big(\frac{x_i}{\tilde{s}_i}\Big) + 1.
\end{align*}
Therefore, $\nabla f(\tilde{s})$ is an all one column matrix. We compute the Hessian $\nabla^2f$ to obtain the following for all $i, j \in Q$ and $x \in \mathcal{S}^\prime_q$,
\begin{align*}
   \Big(\nabla^2f(x)\Big)_{i,j} = \frac{\partial}{\partial x_j} \frac{\partial}{\partial x_i} f(x) = \frac{\partial}{\partial x_j} \Big\{\log\Big( \frac{x_i}{\tilde{s}_i}\Big) + 1\Big\}
    = \mathbf{1}( i = j) \frac{1}{x_i}.
\end{align*}
Thus, $\nabla^2f(\tilde{s})$ is a $(q\times q)$ diagonal matrix with the entries $\{\tilde{s}_i\}_{i = 1}^q$ on the diagonal. We write the second order Taylor series as
\begin{align*}
    f(s) = f(\tilde{s}) + \mathbf{1}^\top (s - \tilde{s}) 
    + \frac{1}{2} (s - \tilde{s})^\top \Big(\nabla^2f(\tilde{s})\Big) (s - \tilde{s}) + R_2(s), 
\end{align*}
where $R_2(s)$ is the remainder. We use the facts that $f(\tilde{s}) = 0$ and $\frac{1}{\tilde{s}_j} = O(1)$ for all $j \in Q$ in the above expansion to verify our claim.
\end{proof}

\end{document}